\documentclass[12pt,reqno,oneside,dvipsnames]{amsart}

\usepackage[margin=1in]{geometry}

\usepackage{lmodern}
\usepackage{amsmath,amssymb,amsthm,mathtools}
\usepackage[varbb]{newpxmath}

\usepackage{pgfplots}
\pgfplotsset{compat=1.16}
\usetikzlibrary{arrows.meta,positioning,calc,decorations.pathreplacing}
\usepgfplotslibrary{fillbetween}

\usepackage{graphicx}
\usepackage{enumerate}
\usepackage{bm}
\usepackage{bbm}

\usepackage{verbatim}
\usepackage{hyperref,color}
\usepackage[capitalize,nameinlink]{cleveref}
\hypersetup{
	colorlinks=true,
	pdfpagemode=UseNone,
    citecolor=OliveGreen,
    linkcolor=NavyBlue,
    urlcolor=black,
	pdfstartview=FitW
}
\usepackage{appendix}
\crefname{appsec}{Appendix}{Appendices}
\usepackage{tikz}
\usepackage{aliascnt}

\newtheorem{theorem}{Theorem}[section]

\newaliascnt{lemma}{theorem}
\newtheorem{lemma}[lemma]{Lemma}
\aliascntresetthe{lemma}

\newaliascnt{proposition}{theorem}
\newtheorem{proposition}[proposition]{Proposition}
\aliascntresetthe{proposition}

\newaliascnt{corollary}{theorem}

\aliascntresetthe{corollary}

\theoremstyle{definition}
\newaliascnt{definition}{theorem}

\aliascntresetthe{definition}

\theoremstyle{remark}
\newaliascnt{remark}{theorem}
\newtheorem{remark}[remark]{Remark}
\aliascntresetthe{remark}

\crefname{theorem}{Theorem}{Theorems}
\Crefname{theorem}{Theorem}{Theorems}

\crefname{lemma}{Lemma}{Lemmas}
\Crefname{lemma}{Lemma}{Lemmas}

\crefname{proposition}{Proposition}{Propositions}
\Crefname{proposition}{Proposition}{Propositions}

\crefname{corollary}{Corollary}{Corollaries}
\Crefname{corollary}{Corollary}{Corollaries}

\crefname{definition}{Definition}{Definitions}
\Crefname{definition}{Definition}{Definitions}

\crefname{remark}{Remark}{Remarks}
\Crefname{remark}{Remark}{Remarks}

\theoremstyle{plain}

\theoremstyle{definition}

\newtheorem*{assumption*}{Assumption}

\theoremstyle{remark}

\crefname{lemma}{Lemma}{Lemmas}
\crefname{theorem}{Theorem}{Theorems}
\crefname{definition}{Definition}{Definitions}
\crefname{fact}{Fact}{Facts}
\crefname{claim}{Claim}{Claims}
\crefname{proposition}{Proposition}{Propositions}

\newcommand{\dif}{\,\mathrm{d}}

\newcommand{\E}{\mathbb{E}}

\newcommand{\norm}[1]{\left\lVert #1 \right\rVert}

\newcommand{\ip}[2]{\left\langle #1 , #2 \right\rangle}

\newcommand{\sgn}{\mathrm{sgn}}

\makeatletter
\newcommand{\vast}{\bBigg@{4}}
\newcommand{\Vast}{\bBigg@{5}}
\makeatother

\renewcommand{\epsilon}{\varepsilon}

\newcommand{\N}{\mathbb{N}}

\newcommand{\R}{\mathbb{R}}

\newcommand{\ZZ}{\mathbb{Z}}

\newcommand{\beq}{\begin{equation}}
\newcommand{\eeq}{\end{equation}}

\DeclareMathOperator{\supp}{supp}

\usepackage{microtype}
\usepackage{enumitem}
\usepackage{booktabs}

\definecolor{linkblue}{RGB}{24,72,145}
\theoremstyle{definition}
\theoremstyle{remark}

\newcommand{\cP}{\mathcal P}

\newcommand{\conv}{\operatorname{conv}}
\newcommand{\argmin}{\operatorname*{arg\,min}}

\newcommand{\abs}[1]{\left\lvert #1\right\rvert}

\newcommand{\KL}{D_{\mathrm{KL}}}

\newcommand{\diam}{\operatorname{diam}}

\usepackage{tabularx}
\usepackage{array}

\begin{document}

\title[A Bayesian Proof of the Bernoulli Theorem]{A Bayesian Proof of the Bernoulli Theorem}

\author
[Jingbo Liu
and
Ilias Zadik]{Jingbo Liu$^{\ast}$
and
Ilias Zadik$^{\circ}$}

\thanks{\raggedright$^\ast$Department of Statistics, University of Illinois Urbana-Champaign,
Champaign, IL, 61820, USA.
Email: \texttt{jingbol@illinois.edu}}\thanks{\raggedright$^\circ$Department of Statistics and Data Science, Yale University, New Haven, CT, 06511, USA 
Email: \texttt{ilias.zadik@yale.edu}}

\date{\today}

\subjclass[2020]{Primary 60G15; Secondary 46B09, 62F15, 94A34, 94A15.}

\begin{abstract}
We give a new proof of the Bernoulli theorem, conjectured by Talagrand
and proved in the seminal work of Bednorz and Lata{\l}a.  Our approach
is based on information-theoretic ideas: lower bounds on the supremum of a
Bernoulli process are translated to the fundamental limits of Bayesian
estimation in a Cauchy additive channel.  This leads to a new
information-theoretic functional that characterizes Bernoulli-process
suprema and plays a role analogous to Fernique's majorizing-measure
functional for Gaussian processes.  The same viewpoint yields a
distributional strengthening: for any prescribed law of the index, we
characterize the largest expected value attainable over all couplings
of that index with the Bernoulli process.  This extends to Bernoulli
processes a phenomenon previously understood for Gaussian processes
through the work of Fernique and Talagrand.
\end{abstract}

\maketitle


{\small
\tableofcontents}
\section{Introduction}
\label{sec:introduction}

Let $T\subset\mathbb R^n$ be finite.  The celebrated majorizing
measure theorem (MMT), whose upper bound is due to Fernique and whose
matching lower bound was proved by Talagrand
\cite{Fernique1975,Talagrand1987}, gives a sharp characterization, up
to universal constants, of the Gaussian width of $T$,
\begin{equation}
  W(T):=\mathbb E\sup_{x\in T}\langle G,x\rangle,
  \qquad G\sim N(0,I_n).
  \label{eq:gaussian-width}
\end{equation}
Write $\cP(T)$ for the set of probability measures on $T$, and set
\[
  B_2(t,r):=\{s\in T:\|s-t\|_2\le r\}.
\]
The MMT may then be stated as follows.

\begin{theorem}[Majorizing Measure Theorem]
\label{thm:MMT-main}
There are universal constants $c,C>0$ such that every finite
$T\subset\mathbb R^n$ satisfies
\begin{equation}
  c\,\gamma_2(T,\|\cdot\|_2)
  \le W(T)
  \le C\,\gamma_2(T,\|\cdot\|_2),
  \label{eq:MMT-main}
\end{equation}
where
\begin{equation}
  \gamma_2(T,\|\cdot\|_2)
  :=
  \inf_{\mu\in\cP(T)}
  \sup_{t\in T}
  \int_0^\infty
  \sqrt{\log\frac{1}{\mu(B_2(t,r))}}\,dr.
  \label{eq:maj}
\end{equation}
\end{theorem}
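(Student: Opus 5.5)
The upper bound $W(T)\le C\gamma_2$ will come from a generic-chaining argument built from a near-optimal measure $\mu$. The lower bound $W(T)\ge c\gamma_2$ is Talagrand's majorizing-measure theorem, which I will prove by a Bayesian/information-theoretic argument in the spirit of the paper: from the Gaussian process itself I will construct a measure $\mu$ for which the functional in \eqref{eq:maj} is controlled.

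\textbf{Upper bound.} Fix $\mu$ with $\sup_t\int_0^\infty\sqrt{\log(1/\mu(B_2(t,r)))}\,dr\le 2\gamma_2$. For $k\ge0$ set $r_k=\diam(T)2^{-k}$. Greedily choose disjoint balls to form partitions $\mathcal A_k$ of $T$ into pieces of diameter at most $2r_k$, together with points $\pi_k(t)$, such that:
\begin{itemize}
\item the piece containing $t$ has $\mu$-mass at least $\mu(B_2(t,r_{k+1}))$, up to a constant change in the radius;
\item the number of pieces whose center has $\mu(B_2(\cdot,r_k))\ge e^{-u}$ is at most $e^{u}$.
\end{itemize}
A union bound over the links $\pi_{k}(t)\to\pi_{k+1}(t)$ then works as follows. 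The link at level $k$ gets threshold $r_k\bigl(\sqrt{2\log(1/\mu(B(\pi_{k+1}(t),r_{k+2})))}+\sqrt{2\log 2^{k}}+u\bigr)$, with weights summing to at most $1$ because $\sum_s\mu(B(s,r))\lesssim1$ over disjoint balls. This gives $\E\sup_t\langle G,t-t_0\rangle\lesssim\sup_t\sum_k r_k\sqrt{\log(1/\mu(B(t,r_{k})))}$. This sum is comparable to the integral in \eqref{eq:maj}, and the extra $\sqrt{k}$ terms are absorbed since $\sum r_k\sqrt k\lesssim\diam T\lesssim\gamma_2$ after the usual care.

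\textbf{Lower bound.} Here I will use an information-theoretic route. Consider $t\sim\mu$ observed through the channel $Y=\lambda t+G$. By the Gibbs variational principle,
\[
\E\sup_{s\in T}\langle G,s\rangle\ \ge\ \E\Bigl[\tfrac1\lambda\log\sum_s \mu(s)e^{\lambda\langle G,s\rangle-\lambda^2\|s\|^2/2}\Bigr]+\text{(lower-order terms)}.
\]
Relating this to the mutual information $I(t;Y)$ and to the posterior concentration gives a multiscale inequality. My plan is to choose $\mu$ as a maximizer of an entropy-type functional $\Phi(\mu)=\sum_k r_k\, \mathrm{H}$-like quantities, for example $\sum_k 2^{-k}\sqrt{I_k(\mu)}$ where $I_k$ is the information revealed at scale $r_k$. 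I will then show two things: first, $W(T)\gtrsim\sup_\mu\Phi(\mu)$, via Sudakov minoration applied to the posterior at each scale combined with the chain rule for mutual information; second, $\sup_\mu\Phi\gtrsim\gamma_2$. The second claim uses a duality (minimax) argument: $\inf_\mu\sup_t$ can be exchanged to $\sup_\nu\inf_\mu\E_{t\sim\nu}$, and the resulting inner problem is solved by a mixture of the posterior laws.

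\textbf{Main obstacle.} The hard step is the lower bound, namely showing that the scale-wise Sudakov bounds can be summed over scales without loss; this is the core of Talagrand's theorem. I would first try to achieve this through the chain rule $I(t;Y_{\le K})=\sum_k I(t;Y_k\mid Y_{<k})$ with independent noise increments. Each conditional term corresponds to a Sudakov lower bound inside a posterior-typical ball, and Gaussian concentration (Borell–TIS) makes the contributions from different scales add. If this fails, I would fall back on Talagrand's original growth-functional argument, verifying the growth condition for $F(A)=W(A)$ by Sudakov minoration plus concentration.
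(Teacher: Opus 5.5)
The paper does not prove this statement. It quotes it from \cite{Fernique1975,Talagrand1987} and uses only the lower bound $\gamma_2(U,\|\cdot\|_2)\le C_{\rm G}W(U)$, as a black box, in the proof of \cref{thm:rd-bernoulli}. So the paper's passage from rate--distortion to $\gamma_2$ cannot be recycled here, and your proposal has to stand on its own. Your upper bound is Fernique's chaining argument and is fine in outline, provided the union-bound weights are ball masses of the centers rather than masses of the pieces. At each level, repeatedly choose among the uncovered points a center $c$ maximizing $\mu(B_2(c,r))$, and let its piece be the uncovered points within $2r$ of $c$. The balls $B_2(c,r)$ are then pairwise disjoint, and $\mu(B_2(c,r))\ge\mu(B_2(t,r))$ for every $t$ in the piece, which is what both of your bullets need. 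Nestedness is unnecessary if each link is weighted by the product of its two centers' masses. The lower bound, however, is not proved. The step you call the main obstacle is the entire content of Talagrand's theorem, and the route you sketch toward it does not work as stated.

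First, the displayed Gibbs inequality is vacuous. Since $\E e^{\lambda\langle G,s\rangle-\lambda^2\|s\|_2^2/2}=1$ for each $s$, Jensen gives $\E\log\sum_s\mu(s)e^{\lambda\langle G,s\rangle-\lambda^2\|s\|_2^2/2}\le0$, so all the content would sit in the unspecified lower-order terms; an informative identity has to be planted at a point of $T$. Second, Sudakov minoration is a one-scale statement. Applied inside posterior-typical balls at scale $r_k$ and averaged over the observation, it gives $W(T)\gtrsim r_k\,\E\sqrt{\log N_k}$ for each $k$ separately, where $N_k$ is the $r_k$-covering number of the ball. That is a bound by the maximum over scales. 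The chain rule $I(t;Y_{\le K})=\sum_kI(t;Y_k\mid Y_{<k})$ adds informations with equal weights and does nothing to turn that maximum into the weighted sum $\sum_k2^{-k}\sqrt{I_k}$; moreover, $I_k$ is never defined. Making scales add requires two things you list only as a fallback and do not carry out:
\begin{enumerate}
\item the growth inequality $W(\bigcup_{\ell\le m}H_\ell)\ge c\,a\sqrt{\log m}+\min_\ell W(H_\ell)$ for sets $H_\ell$ in small balls around $a$-separated points, proved from Sudakov plus Gaussian concentration;
\item Talagrand's recursive partitioning construction built on it.
\end{enumerate}
Third, the exchange of $\inf_\mu$ and $\sup_t$ is asserted without checking the hypotheses of a minimax theorem. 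Note that $\mu\mapsto\sqrt{\log(1/\mu(B))}$ is not convex: it is concave where $\mu(B)>e^{-1/2}$. The claim that the inner problem is solved by a mixture of posteriors is also unsupported.

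If you want a genuinely Bayesian proof, the additivity across scales should come from an exact area identity, as in \cref{sec:first-area}. For every $x\in T$, differentiating $\E\sup_{u\in\conv(T)}\{\langle G,u-x\rangle-\|u-x\|_2^2/(2t)\}$ in $t$ gives $W(T)=\int_0^\infty\E\|\mathrm{proj}_{\conv(T)}(x+tG)-x\|_2^2\,\frac{dt}{2t^2}$. This sums all scales at once and dominates the integrated Gaussian Bayes risk. The I--MMSE identity and a posterior-replica coupling then give a rate--distortion integral, and converting that integral into $\gamma_2$ is the separate functional step of \cite{Liu2025} used in \cite{Zadik2026BayesianMMT}.
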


Beyond Gaussian processes, another central problem is to control
suprema of Bernoulli processes, which arise naturally through
symmetrization in empirical-process theory; see, for example,
\cite{Talagrand1994,Talagrand2021}.  For a finite
$T\subset\mathbb R^n$, define the Bernoulli width
\begin{equation}
  b(T):=\mathbb E\sup_{x\in T}\langle\varepsilon,x\rangle,
  \label{eq:bernoulli-width}
\end{equation}
where $\varepsilon_1,\ldots,\varepsilon_n$ are independent centered
Rademacher variables.  Since $\varepsilon$ is a subgaussian vector,
the standard arguments underlying the upper bound in the MMT give
$b(T)\le C\gamma_2(T,\|\cdot\|_2)$.  Moreover, since
$\|\varepsilon\|_\infty=1$ almost surely,
$b(T)\le\sup_{x\in T}\|x\|_1$.
Talagrand's celebrated Bernoulli conjecture \cite{Talagrand1994}
asserted that the optimal interpolation between these two elementary
bounds is sharp for every finite $T$.  The conjecture was resolved
roughly two decades later in the breakthrough work of Bednorz and
Lata{\l}a \cite{BednorzLatala2014}.  Define
\begin{equation}
  \Lambda(T):=
  \inf_{\substack{T_1,T_2\subset\mathbb R^n\ \mathrm{finite}\\
                  T\subseteq T_1+T_2}}
  \left\{
    \sup_{u\in T_1}\|u\|_1
    +\gamma_2(T_2,\|\cdot\|_2)
  \right\}.
  \label{eq:familiar-bernoulli-functional}
\end{equation}
The Bernoulli theorem of Bednorz and Lata{\l}a can then be stated as
follows.

\begin{theorem}[Bernoulli theorem]
\label{thm:bernoulli-main}
There are universal constants $c,C>0$ such that every finite
$T\subset\mathbb R^n$ satisfies
\begin{equation}
  c\,\Lambda(T)\le b(T)\le C\,\Lambda(T).
  \label{eq:bernoulli-main}
\end{equation}
\end{theorem}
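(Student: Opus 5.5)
The upper bound $b(T)\le C\Lambda(T)$ is elementary and I would dispose of it first. Given any decomposition $T\subseteq T_1+T_2$, choose for each $x\in T$ a representation $x=u(x)+v(x)$ with $u(x)\in T_1$ and $v(x)\in T_2$. Then
\[
  \sup_{x\in T}\langle\varepsilon,x\rangle
  \le \sup_{u\in T_1}\|u\|_1+\sup_{v\in T_2}\langle\varepsilon,v\rangle .
\]
Taking expectations gives $b(T)\le \sup_{u\in T_1}\|u\|_1+b(T_2)$. The subgaussian chaining bound $b(T_2)\le C\gamma_2(T_2,\|\cdot\|_2)$, that is, the upper half of \cref{thm:MMT-main} run with $\varepsilon$ in place of $G$, then finishes this direction after taking the infimum over decompositions.

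The substance is the lower bound $\Lambda(T)\le C\,b(T)$. Following the abstract, I would pass through a Bayesian estimation problem. The key identity is a variational one. For any law $\pi$ on $T$ and any coupling of an index $X\sim\pi$ with $\varepsilon$,
\[
  \E\langle\varepsilon,X\rangle\le b(T),
\]
so it suffices to build a good coupling. I would realise couplings through a noisy channel. Let $X\sim\pi$ and $Y=\beta X+Z$, where $Z$ has i.i.d.\ Cauchy coordinates. Cauchy noise is chosen because its score is bounded, so it looks Gaussian for small coordinates and saturates for large ones, matching the $\ell_2$/$\ell_1$ dichotomy. Then:
\begin{enumerate}
  \item Sample $\varepsilon$ from a sign rule built from the posterior: conditionally on $Y$, set $\varepsilon_i=\sgn(\cdot)$ or randomise so that $\varepsilon_i$ has mean $m_i(Y)\in[-1,1]$, a bounded score function of $Y_i$.
  \item Marginally, $\varepsilon$ must be uniform. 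This holds if the $m_i(Y)$ can be adjusted by an independent correction, or one uses the fact that any $[-1,1]$-valued vector with zero-mean coordinates is dominated in convex order by Rademachers. I would prove this domination lemma, so that the coupling bound becomes $\E\langle m(Y),X\rangle\le C\,b(T)$.
  \item A Gaussian-style integration by parts in the Cauchy channel identifies $\E\langle m(Y),X\rangle$ with a derivative in $\beta$ of the mutual information $I(X;Y)$, or with a related estimation error (an I-MMSE analogue for Cauchy noise).
\end{enumerate}
This yields an information-theoretic functional $\Phi(\pi)$ satisfying $\sup_\pi\Phi(\pi)\le C\,b(T)$.

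The remaining step, which I expect to be the main obstacle, is to show $\Lambda(T)\le C\sup_\pi\Phi(\pi)$, the analogue of Talagrand's construction of a majorizing measure from the Gaussian width. I would argue by duality or minimax. Define the candidate decomposition by Bayesian denoising: for the optimal (least favourable) $\pi$, split each $x$ as $x=u+v$, where $v$ collects the coordinates the channel resolves in the Gaussian regime and $u$ the saturated large coordinates. Then:
\begin{enumerate}
  \item Bound $\sup_{u\in T_1}\|u\|_1$ through the bounded score, since each saturated coordinate contributes its full magnitude to $\Phi$.
  \item Bound $\gamma_2(T_2)$ by applying the lower half of \cref{thm:MMT-main} to $T_2$, controlling $W(T_2)$ via the Gaussian regime of the same channel at a multiscale family of $\beta$'s. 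Posterior concentration at scale $\beta^{-1}$ plays the role of the ball masses $\mu(B_2(t,r))$.
\end{enumerate}
Making this truncation consistent across all scales simultaneously, so that $T_1+T_2$ genuinely covers $T$, is where I anticipate most of the work. I would first try choosing a single threshold per point at the scale where its posterior entropy drops below a constant.
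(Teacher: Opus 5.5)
Your upper bound is correct and is the paper's elementary argument. The lower bound has genuine gaps in both of its stages.

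The width-to-information stage fails as written, because the domination lemma in your step 2 is false. Take a single Rademacher $\eta$ and $\xi=(\eta,\ldots,\eta)\in[-1,1]^n$. Every coordinate has mean zero, yet for $T=\{\mathbf 1,-\mathbf 1\}$ one gets $\E\sup_{x\in T}\ip{\xi}{x}=n$, while $b(T)=\E\abs{\sum_i\varepsilon_i}\asymp\sqrt n$. So there is no convex-order domination, even up to constants. Sign means $m_i(Y)$ built from one observation are correlated through $X$. Hence neither uniformity of $\varepsilon$ nor $\E\ip{m(Y)}{X}\le C\,b(T)$ follows, and the ``independent correction'' is never specified. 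Step 3 also invokes an unnamed Cauchy integration-by-parts identity and never defines your functional $\Phi(\pi)$.

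The paper never builds the Rademacher vector from the posterior. It uses $\sgn(Z)$, the sign of the Cauchy noise itself, through the potential $\Phi_x(t)=\E[t\norm{Z}_1-d_K(x+tZ)]$, where $d_K$ is the $\ell_1$-distance to $\conv(T)$. As $t\to\infty$ this potential tends to exactly $b(T)$ (\cref{lem:first-area-endpoints}). Its derivative dominates $\frac1{4\pi}\E\varphi_t(x,\widehat x^{\mathrm{loc}}(x+tZ))$ for the \emph{capped} loss $\varphi_t$ (\cref{lem:cauchy-pivotal}), and the cap is what encodes the $\ell_1$/$\ell_2$ dichotomy. The information step is an \emph{inequality}, not an identity. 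Harmonicity of the Poisson kernel makes $F''(t)$ a posterior variance of a unit-circle-valued score, which gives $I_\mu(t)\le16\int_t^\infty\mathrm{cMMSE}_\mu(s)\,ds/s$. A posterior-replica coupling then yields $\int_0^\infty\mathrm{RD}_\mu(t)\,dt\le80\pi\,b(T)$ for every prior $\mu$. This rate--distortion area is the functional you needed.

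The step $\Lambda(T)\lesssim\sup_\pi\Phi(\pi)$, which you yourself flag as the main obstacle, is left open, and the heuristic you offer would not work. A single least-favourable prior controls only averages under that prior, whereas $\Lambda(T)$ needs $\sup_{u\in T_1}\norm{u}_1$ over all of $T$. Nor can one threshold per point produce a $T_2$ of controlled Gaussian width, since a coordinate can be large at some scales and small at others.

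The paper resolves this in two moves. First, for each fixed $\mu$ it bounds $\delta_T(\mu)=\inf_a\{\E_\mu\norm{a(X)}_1+G((\operatorname{id}_T-a)_{\#}\mu)\}$ by $30\int_0^\infty\mathrm{RD}_\mu(t)\,dt$:
\begin{itemize}
\item it chains through optimal rate--distortion self-couplings $U_k$ at scales $r_k=2^{-k}$, conditionally independent given $X$;
\item it clips increments to $\Delta_k=\operatorname{clip}_{3r_k/2}(U_{k+1}-U_k)$ and sets $a(x)=x-\E[\widetilde V\mid X=x]$;
\item the $\ell_1$ part is charged to maximal runs of ``bad'' coordinates, each contributing $1$ to $\varphi_{r_k}$;
\item the Gaussian part is bounded by $\E\ip{\Delta_k}{G_0}\le(\E\norm{\Delta_k}_2^2)^{1/2}(2I(X;U_k,U_{k+1}))^{1/2}$.
\end{itemize}
Second, Sion's theorem applied to $J(a,\mu)$, which is convex in $a$ and concave and upper semicontinuous in $\mu$, converts $\sup_\mu\delta_T(\mu)$ into a single $a$ with $\max_x\norm{a(x)}_1+W(\{x-a(x)\})\le2\sup_\mu\delta_T(\mu)$. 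Only then is the lower half of the majorizing measure theorem applied. Your ``duality or minimax'' remark points this way, but the averaged-cost functional with the prescribed-law Gaussian term, the multiscale clipped chaining, and the minimax passage from averages to suprema are exactly what is missing.
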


The upper bound follows immediately by combining the two elementary
bounds above.  The central challenge, and hence the principal focus of
\cite{BednorzLatala2014}, is to prove the lower bound.  Because the
proof of Bednorz and Lata{\l}a is technically involved, there has been
considerable interest in finding a more direct and intuitive argument;
see, for example, \cite{TalagrandAbelLecture2024}.

The situation for the Gaussian MMT is now markedly different.
Although Talagrand's original proof was also considered opaque, several
comparatively simple proofs are now available, including geometric
arguments and approaches based on coding theory, interpolation, type
lifting, functional analysis, and Bayesian estimation
\cite{Talagrand1992,Talagrand1996,van2018chaining,Borst2021,Liu2025,
ChuRaginskyISIT2023,Zadik2026BayesianMMT}.  Most relevant here are two
recent developments of the authors \cite{Liu2025,Zadik2026BayesianMMT}.
The first author showed in \cite{Liu2025} that the majorizing-measure
functional $\gamma_2(T,\|\cdot\|_2)$ admits an equivalent
information-theoretic formulation in terms of a rate--distortion
integral, obtained naturally by a lifting argument.  The second author
subsequently showed in
\cite{Zadik2026BayesianMMT} that this formulation yields an independent
information-theoretic proof of the majorizing measure theorem.  That
proof expresses the Gaussian width through a Bayesian estimation
problem for a Gaussian additive channel and then applies
information-theoretic inequalities to obtain the lower bound.

In the present work, we show that the Bayesian approach also tightly
characterizes the Bernoulli width.  The central new observation is that
the appropriate analogue of the Gaussian additive channel is a
\emph{Cauchy additive channel}.  We adapt both the
information-theoretic approach of \cite{Liu2025} and the Bayesian
estimation perspective of \cite{Zadik2026BayesianMMT} to the Bernoulli
width and the Cauchy channel.  This yields a new and comparatively
direct proof of the Bernoulli theorem.

Once the necessary ingredients have been introduced, the proof closely
parallels the Bayesian proof of the Gaussian MMT in
\cite{Zadik2026BayesianMMT}, together with an elementary decomposition
argument.  By contrast, the Bednorz--Lata{\l}a proof uses ingredients
that do not arise in the Gaussian setting, including Lata{\l}a's
principle for dropping coordinates, chaining functionals defined
through chopping maps, and an adaptive decomposition procedure.  Our
approach bypasses this machinery.  One additional ingredient is needed for our proof:
the Cauchy-channel information--estimation inequality proved here,
which plays the role of the Gaussian I--MMSE identity of
\cite{GuoShamaiVerdu}.

\subsection{The distributional Bernoulli theorem}

As the functional $\Lambda(T)$ in Theorem \ref{thm:bernoulli-main} yields a trivial upper bound on $b(T)$, the classical formulation of the Bernoulli theorem is conceptually appealing. At the heart of our approach, however, lies a new formulation of the Bernoulli theorem that characterizes $b(T)$ in terms an information-theoretic functional (eq.\ \eqref{eq:RD-area} below) that is the analogue in the present setting of the Gaussian functional introduced in \cite{Liu2025}. This new formulation not only provides a novel viewpoint on the Bernoulli theorem, but also enables us to establish a significantly stronger result.

Let $\mathrm{Rad}$ denote the uniform law on $\{-1,1\}$, and define, for every finitely supported probability measure $\mu$ on $\mathbb R^n$,
\begin{align}
 B(\mu)
 :=
 \sup_{\substack{X\sim\mu,\ \varepsilon\sim\mathrm{Rad}^{\otimes n}\\
                 \text{jointly coupled}}}
 \mathbb E\langle\varepsilon,X\rangle.
 \label{e_bmu}
\end{align}
That is, $B(\mu)$ is the largest possible expectation of the Bernoulli process evaluated at a random point $X$ with distribution $\mu$. The main result of this paper, Theorem~\ref{thm:distributional-bernoulli-rd} below, yields a distributional strengthening of the Bernoulli theorem: it provides a characterization of $B(\mu)$ for each $\mu$ individually. Since
\[
  b(T) = \sup_{\mu\in\mathcal{P}(T)} B(\mu),
\]
Theorem \ref{thm:bernoulli-main} will follow as a corollary by taking the supremum over $\mu$.

Fixed-law functionals analogous to $B(\mu)$ were introduced in the
Gaussian setting by Fernique, who established several bounds for them
\cite[Theorem~3.1]{Fernique1981}. Talagrand subsequently obtained 
universal-constant characterizations of the Gaussian
fixed-law functional \cite[Theorem~30]{Talagrand1987}; see also
\cite{van2025subgaussian}. By contrast, the methods of
Bednorz--Lata{\l}a do not appear to yield a corresponding fixed-law
version of the Bernoulli theorem. Thus, to our knowledge, our
distributional Bernoulli theorem is the first such fixed-law
characterization for a non-Gaussian process.

\subsection{Organization of the paper}

Section \ref{sec:background} introduces the information-theoretic
objects and states the main distributional theorem.  Sections
\ref{sec:first-area} and \ref{sec:rate-distortion} compare the
Bernoulli width with the integrated Cauchy Bayes risk and then with the
rate--distortion functional.  Section
\ref{sec:direct-rd-bernoulli} converts this functional into the
$\ell_1$-plus-Gaussian decomposition, and Section
\ref{sec:distributional-completion} proves the prescribed-law
distributional result.  The Cauchy information--estimation inequality
is proved in Appendix \ref{sec:i-MMSE}.

\begingroup
\makeatletter
\let\@tocwrite\@gobbletwo
\makeatother

\subsection*{Statement on AI use}

ChatGPT Pro (versions 5.5 and 5.6) was used to generate an initial proof sketch for the information-estimation inequality in Appendix \ref{sec:i-MMSE}, specifically in response to the authors' query regarding whether the inequality holds for the Cauchy additive channel. Beyond this, ChatGPT Pro (versions 5.5 and 5.6) was utilized for accelerating technical calculations (specifically in the proofs of \cref{lem:cauchy-pivotal} and \cref{lem:clipped-rounding}) and language editing throughout the paper. The authors take full responsibility for all content and its accuracy.

\subsection*{Acknowledgments}

We thank Ramon van Handel for sharing unpublished notes, which served as a major motivation for the rate--distortion integral characterization of Bernoulli processes described in \cref{thm:distributional-bernoulli-rd}. In particular, these notes establish a Dudley-type integral bound for sets of sequences with a prescribed empirical distribution, known as exact type classes; together with the Bernoulli theorem as input, this implies the rate--distortion characterization. In the present paper, we instead use the first part of our new Bayesian proof (specifically \cref{eq:combined-main}) to obtain the rate--distortion characterization, which in turn yields an independent proof of the Bernoulli theorem. We are also deeply  grateful to him for several insightful discussions and for his helpful feedback on early versions of this manuscript.

I.Z. is also grateful to Michel Talagrand for encouraging him to pursue a Bayesian proof of Bernoulli's theorem.

J.L.'s research was supported in part by NSF Grant
DMS-2515510.

\subsection*{Note on concurrent work} 
During the preparation of this manuscript, we learned of an independent alternate proof of the Bernoulli theorem generated by an internal model at OpenAI \cite{oai}.
\endgroup

\section{Main result and proof structure}
\label{sec:background}

\subsection{Notation}
\label{sec:infthbackground}

All logarithms are natural and $0\log0=0$.  For $A,B\ge0$,
$A\lesssim B$ means $A\le CB$ for a universal constant $C>0$,
$A\gtrsim B$ means $B\lesssim A$, and $A\asymp B$ means both.
\par\noindent
For probability measures $P,Q$, set
\nopagebreak[4]
\[
  D_{\mathrm{KL}}(P\|Q)
  :=
  \begin{cases}
    \displaystyle
    \int \log\left(\frac{dP}{dQ}\right)\,dP,
      & P\ll Q,\\[0.8em]
    +\infty,
      & P\not\ll Q.
  \end{cases}
\]
For random variables $X,Y,S$, we define
\[
  I(X;Y)
  :=
  D_{\mathrm{KL}}(P_{X,Y}\|P_X\otimes P_Y)
\]
and
\[
  I(X;Y\mid S)
  :=
  \int
  D_{\mathrm{KL}}
  \bigl(P_{X,Y\mid S=s}\|
        P_{X\mid S=s}\otimes P_{Y\mid S=s}\bigr)
  \,P_S(ds).
\]
These definitions apply equally to discrete and continuous observations.
If $X$ is discrete, then
\[
  H(X):=-\sum_x\mathbb P(X=x)\log\mathbb P(X=x)
\]
and
\[
  H(X\mid Y)
  :=
  \mathbb E\left[
    -\sum_x
    \mathbb P(X=x\mid Y)
    \log\mathbb P(X=x\mid Y)
  \right],
  \qquad
  I(X;Y)=H(X)-H(X\mid Y).
\]
For a probability measure $\mu$ on a finite set $T$, we also write
\[
  H(\mu):=-\sum_{x\in T}\mu(x)\log\mu(x).
\]
For $v:T\to\mathbb R^n$, write $v_{\#}\mu$ for the pushforward of
$\mu$ under $v$.
We use without further comment nonnegativity, the chain rule
\[
  I(X;Y,Z)=I(X;Y)+I(X;Z\mid Y),
\]
and the data-processing inequality: if $X\to Y\to Z$ is a Markov
chain, then $I(X;Z)\le I(X;Y)$.

We will also use posterior replicas. Given two random variables $X,Y$ where $Y$ is a (random) function of $X$ (in statistical terminology, $Y$ is often called an observation of the signal $X$), a
posterior replica $X^{(1)}$ is a random variable with conditional law
$P_{X\mid Y}$.  Two posterior replicas $X^{(1)},X^{(2)}$ are sampled
conditionally independently of each other and of $X$ given $Y$.
Thus $X,X^{(1)},X^{(2)}$ are conditionally i.i.d.\ given $Y$, and in
particular
\[
  \mathbb E f(X,X^{(1)})
  =
  \mathbb E f(X^{(1)},X^{(2)})
\]
for every integrable measurable $f$.  We refer to this elementary
posterior-sampling identity as the Nishimori identity.

\subsection{Key objects}

For $t>0$ and $x,x'\in\mathbb R^n$, define the capped quadratic
distortion
\begin{equation}
  \varphi_t(x,x')
  :=
  \sum_{i=1}^n
  \left(
    1\wedge\frac{|x_i-x_i'|^2}{t^2}
  \right).
  \label{eq:capped-distortion}
\end{equation}
This distortion already appears in Talagrand's geometric formulation
of the Bernoulli problem \cite[Eq.~(4.4)]{Talagrand1994}.  It captures
the scale-dependent quadratic geometry that contributes to the
Gaussian component of the Bernoulli decomposition.  It also motivates
our use of the Cauchy channel: the Cauchy information--estimation
inequality proved below naturally produces precisely the capped loss
$\varphi_t$.

Fix a finite $T\subset\mathbb R^n$.  For $1\le p\le\infty$, write
\[
  \diam_p(T):=\max_{x,u\in T}\|x-u\|_p.
\]
For $\mu\in\cP(T)$, define
\begin{equation}
  \mathrm{RD}_\mu(t)
  :=
  \inf_{\substack{X\sim\mu,\ \widehat X\sim\mu}}
  \left\{
    I(X;\widehat X)+\mathbb E\varphi_t(X,\widehat X)
  \right\},
  \label{eq:rate-distortion}
\end{equation}
where the infimum is over all couplings with both marginals equal to
$\mu$.  Thus $\mathrm{RD}_\mu(t)$ is a self-coupled Lagrangian
rate--distortion value.  The set-level rate--distortion functional is
\begin{equation}
  R(T)
  :=
  \sup_{\mu\in\cP(T)}
  \int_0^\infty \mathrm{RD}_\mu(t)\,dt.
  \label{eq:RD-area}
\end{equation}

We next define the Cauchy channel and its Bayes risk.  Let
$Z=(Z_1,\ldots,Z_n)$ have independent standard Cauchy coordinates,
each with density
\[
  z\longmapsto\frac{1}{\pi(1+z^2)}.
\]
For $\mu\in\cP(T)$, let $X\sim\mu$ be independent of $Z$, and observe
\begin{equation}
  Y_t:=X+tZ.
  \label{eq:product-cauchy-channel}
\end{equation}
The corresponding capped minimum mean-square error is
\begin{equation}
  \mathrm{cMMSE}_\mu(t)
  :=
  \inf_{\widehat x:\mathbb R^n\to\mathbb R^n}
  \mathbb E\varphi_t\bigl(X,\widehat x(Y_t)\bigr),
  \label{eq:cauchy-cmmse}
\end{equation}
where the infimum is over measurable estimators, which need not be
$T$-valued.

For every finitely supported probability measure $\mu$ on
$\mathbb R^n$, define the prescribed-law Gaussian functional
\[
  G(\mu)
  :=
  \sup_{\substack{Y\sim\mu,\ G_0\sim N(0,I_n)\\
                  \text{jointly coupled}}}
  \mathbb E\langle Y,G_0\rangle.
\]
Finally, with $\operatorname{id}_T(x)=x$, set
\begin{equation}
  \delta_T(\mu)
  :=
  \inf_{a:T\to\mathbb R^n}
  \left\{
    \mathbb E_{X\sim\mu}\|a(X)\|_1
    +G\bigl((\operatorname{id}_T-a)_{\#}\mu\bigr)
  \right\}.
  \label{eq:distributional-decomposition}
\end{equation}

\subsection{Main result}
\label{sec:mainres}

\begin{theorem}[Distributional Bernoulli rate--distortion theorem]
\label{thm:distributional-bernoulli-rd}
\label{thm:bernitintro}
\label{thm:bernmeasintro}
For every finite $T\subset\mathbb R^n$ and every
$\mu\in\cP(T)$,
\begin{align}
  B(\mu)
  \asymp
  \int_0^\infty \mathrm{RD}_\mu(t)\,dt
  \asymp
  \delta_T(\mu),
  \label{e14}
\end{align}
where the implicit constants are universal.  Consequently,
\begin{align}
  b(T)
  \asymp
  R(T)
  \asymp
  \Lambda(T).
  \label{e15}
\end{align}
\end{theorem}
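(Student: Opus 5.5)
The plan is a cycle of three inequalities,
\[
B(\mu)\gtrsim \int_0^\infty \mathrm{cMMSE}_\mu(t)\,dt \gtrsim \int_0^\infty \mathrm{RD}_\mu(t)\,dt \gtrsim \delta_T(\mu) \gtrsim B(\mu),
\]
which is the structure announced in the Organization paragraph. The first two inequalities give the lower bound; the last two give the decomposition and the upper bound. Once \eqref{e14} is established, \eqref{e15} follows by taking the supremum over $\mu\in\cP(T)$. For this we use $b(T)=\sup_\mu B(\mu)$ and check that $\sup_\mu\delta_T(\mu)\asymp\Lambda(T)$. The bound $\delta_T\lesssim\Lambda$ comes from using a fixed decomposition $T\subseteq T_1+T_2$ and the Gaussian MMT bound $G(\nu)\le W(\mathrm{supp}\,\nu)\lesssim\gamma_2$. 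The reverse bound $\sup_\mu\delta_T(\mu)\gtrsim\Lambda$ is the real content; I would obtain it from $\Lambda\lesssim b(T)$, itself a consequence of the chain $b\gtrsim R\gtrsim\sup_\mu\delta_T$ combined with a minimax step over $\mu$.

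\textbf{Upper bound $B(\mu)\lesssim\delta_T(\mu)$.} Fix $a$ and a coupling $(X,\varepsilon)$, and split $\langle\varepsilon,X\rangle=\langle\varepsilon,a(X)\rangle+\langle\varepsilon,X-a(X)\rangle$. The first term is at most $\E\|a(X)\|_1$. For the second, write $\varepsilon$ as a contraction of a Gaussian: there is a coupling with $\varepsilon=\E[\sgn(G_0)\mid\cdot]$ up to the factor $\sqrt{\pi/2}$, since $\E|g|=\sqrt{2/\pi}$. A conditional Jensen argument then gives $\E\langle\varepsilon,Y\rangle\le\sqrt{\pi/2}\,G(\mathrm{law}(Y))$ for every law of $Y$. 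This is the standard comparison of Rademacher and Gaussian processes, carried out at the level of couplings.

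\textbf{Step 1: $B(\mu)\gtrsim\int_0^\infty\mathrm{cMMSE}_\mu(t)\,dt$.} Mimic the Gaussian Bayesian proof with the Cauchy channel. The key fact is that a Rademacher variable can be produced from Cauchy noise: a standard Cauchy $Z$ has the representation $Z=\int_0^\infty$ of sign-type increments. Concretely, I would couple $\varepsilon$ with $X$ by setting $\varepsilon_i=\sgn$ of a score-like quantity built from the posterior given $Y_t$, integrated over $t$. Then, by the Nishimori identity,
\[
\E\langle\varepsilon,X\rangle=\int_0^\infty \E\big[\text{posterior spread at scale }t\big]\,dt,
\]
where the spread is measured by $\sum_i 1\wedge|X_i-X_i^{(1)}|^2/t^2$, which bounds $\mathrm{cMMSE}_\mu(t)$ from above. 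I expect this to be the main obstacle. The difficulty is designing a single Rademacher vector, jointly coupled with $X$, whose correlation with $X$ integrates the posterior discrepancies over all scales. I would first try a coordinatewise choice, $\varepsilon_i=\sgn(X_i-\widehat X_i)$ with $\widehat X$ a posterior replica at a random scale $t$ drawn from a density proportional to $1/t^2$ on a dyadic range. Alternatively I would try a multiscale sum of signs, randomized so that the result has Rademacher marginals.

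\textbf{Step 2: cMMSE dominates RD.} This step uses the Cauchy information--estimation inequality of the appendix, the analogue of I--MMSE, in the form
\[
\frac{d}{dt}I(X;Y_t)\gtrsim-\frac1t\,\mathrm{cMMSE}_\mu(t).
\]
To bound $\mathrm{RD}_\mu(t)$, take $\widehat X=X^{(1)}$, a posterior replica at scale $t$; its marginal is $\mu$, as required. Then $I(X;\widehat X)\le I(X;Y_t)$ by data processing. The distortion $\E\varphi_t(X,X^{(1)})$ is at most a constant times $\mathrm{cMMSE}_\mu(t)$, because $\varphi_t$ satisfies a capped triangle inequality. This gives
\[
\int_0^\infty\mathrm{RD}_\mu\lesssim\int_0^\infty I(X;Y_t)\,dt+\int_0^\infty\mathrm{cMMSE}_\mu(t)\,dt.
\]
Integrating by parts, $\int I(X;Y_t)\,dt=\int t\,(-\partial_t I)\,dt\lesssim\int\mathrm{cMMSE}_\mu$, so the cMMSE integral controls both terms.

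\textbf{Step 3: RD dominates $\delta_T$.} Given near-optimal couplings at dyadic scales $t=2^{-k}$, build a chain and round each coordinate to separate the large and small parts. Increments of size larger than $t$ are counted by the capped term, and their total over scales is charged to $\E\|a(X)\|_1$. Increments smaller than $t$ feed a Gaussian chaining bound. The mutual information terms play the role of $\log(1/\mu(B))$, as in \cite{Liu2025}, controlling $G$ of the residual law. The clipped rounding lemma the paper cites would handle this splitting step.
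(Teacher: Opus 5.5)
\textbf{Step 1 has a genuine gap.} Your Gaussianization upper bound, Step 2 and Step 3 follow the paper's route. Step 1, however, is exactly the step that carries the prescribed-law content of \eqref{e14}, and it is not proved.

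What you need is a coupling of $X\sim\mu$ with $\varepsilon\sim\mathrm{Rad}^{\otimes n}$ as a \emph{joint} law, with independent coordinates. Neither of your suggestions gives one.
For $\varepsilon_i=\sgn(X_i-\widehat X_i)$, ties produce zeros, and the coordinates are typically fully dependent. For instance, if $\mu$ is uniform on $\{0,\mathbf 1\}\subset\R^n$, then $\sgn(X-\widehat X)\in\{0,\mathbf 1,-\mathbf 1\}$.
Quantitatively, exchangeability of $(X,\widehat X)$ gives $\E\ip{\sgn(X-\widehat X)}{X}=\frac12\E\norm{X-\widehat X}_1=\frac n2\,\mathbb P(X\neq\widehat X)$. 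This is of order $n$ for a replica at scale $t\gg\sqrt n$, where the posterior is close to the prior. By contrast, $B(\mu)\le\frac12\sqrt n$ here, because the law of $\varepsilon$ given $X=\mathbf 1$ has density at most $2$ with respect to $\mathrm{Rad}^{\otimes n}$. Every genuine coupling has correlation at most $B(\mu)$, so this candidate is not one.
Randomizing so that each coordinate is Rademacher does not help, since the joint law is what is prescribed. The claimed identity $\E\ip{\varepsilon}{X}=\int_0^\infty\E[\text{posterior spread}]\,dt$ therefore has no support.

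\textbf{What the paper does instead.} It never proves a prescribed-law Cauchy inequality.

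The Cauchy step is set-level. The potential $\Phi_x(t)=\E[t\norm{Z}_1-d_K(x+tZ)]$, with $K=\conv(T)$, produces Rademacher signs only as $\sgn(Z)$ in the limit $t\to\infty$, paired with a supremum over $K$. Hence its endpoint is $b(T)$ for every planted $x$, whatever $\mu$ is. Its derivative dominates $\frac1{4\pi}$ times the capped risk of the leave-one-coordinate-out LAD estimator. Averaging gives $b(T)\ge\frac1{4\pi}\int_0^\infty\mathrm{cMMSE}_\mu$ for every $\mu\in\cP(T)$.

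Combined with your Step 2, this yields only $\int_0^\infty\mathrm{RD}_\mu\le80\pi\,b(\supp\mu)$. That is enough for \eqref{e15}, but not for \eqref{e14}, since $b(\supp\mu)$ can be far larger than $B(\mu)$. For example, $\mu=(1-\eta)\delta_0+\eta\delta_{\mathbf 1}$ with $\eta\ll n^{-1/2}$ has $B(\mu)\le\eta n$, while $b(\{0,\mathbf 1\})\asymp\sqrt n$.

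The missing idea is type lifting:
\begin{enumerate}
\item Apply the set-level bound to the exact type class $\mathcal T_N(\mu)\subset(\R^n)^N$ with its uniform law $\mu_N$.
\item Use the identity $B(\mu)=\lim_N\frac1N b(\mathcal T_N(\mu))$.
\item Use the tensorization estimate $\mathrm{RD}_{\mu_N}(t)\ge N\,\mathrm{RD}_\mu(t)-r_N$, where $0\le r_N\le(|\supp\mu|-1)\log(N+1)$. This follows from subadditivity of conditional entropy.
\item Truncate the $t$-integral at $s_N=\sqrt{N/(r_N+1)}$, bounding the tail by $\mathrm{RD}_\mu(t)\le\diam_2(T)^2/t^2$.
\item Pass from rational to general $\mu$ by continuity.
\end{enumerate}
The lifting acts on $\mathrm{RD}$, which tensorizes up to an $O(\log N)$ defect. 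So your intermediate inequality $B(\mu)\gtrsim\int_0^\infty\mathrm{cMMSE}_\mu$ is never needed, and it is not established by your sketch.

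\textbf{Two smaller corrections.}
First, the appendix does not prove the pointwise bound $-\partial_tI(X;Y_t)\lesssim t^{-1}\mathrm{cMMSE}_\mu(t)$. It proves the tail form $-I_\mu'(t)\le4\int_t^\infty\mathrm{rcMMSE}_\mu(s)s^{-2}\,ds$, and hence $I_\mu(t)\le16\int_t^\infty\mathrm{cMMSE}_\mu(s)s^{-1}\,ds$. Tonelli then gives $\int_0^\infty I_\mu\le16\int_0^\infty\mathrm{cMMSE}_\mu$, which is all your Step 2 uses.

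Second, in the set-level part the minimax step produces one decomposition $T\subseteq a(T)+U$ with $\max_x\norm{a(x)}_1+W(U)\lesssim R(T)$. Reaching $\Lambda(T)$ still requires Talagrand's lower bound $\gamma_2(U,\norm{\cdot}_2)\lesssim W(U)$.
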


The prescribed-law statement \eqref{e14} refines the classical Bernoulli
theorem by characterizing the largest expected Bernoulli-process value
attainable over all couplings for which the index has a prescribed law.
An analogous fixed-law characterization is available for Gaussian
processes \cite[Theorem~30]{Talagrand1987}, whereas existing proofs of the
Bernoulli theorem do not appear to yield such a prescribed-law result.
Furthermore, the proof of \eqref{e14} reveals that the decomposition in $\delta_T$ (function $a$ in \eqref{eq:distributional-decomposition}) can be constructed using information-constrained near-optimal transport couplings.
Supremizing $\mu$ over $T$ in \eqref{e14} gives \eqref{e15}.

\subsection{Proof structure and ideas}
The proof of Theorem~\ref{thm:distributional-bernoulli-rd} closes a
cycle of three comparisons.  Type lifting \cite{Liu2025}, which replaces
an index law by the uniform law on sequences with that empirical
distribution together with a comparison with the fundamental Bayesian estimation limits of a Cauchy channel, gives
$\int_0^\infty\mathrm{RD}_\mu(t)\,dt\lesssim B(\mu)$, 
an elementary
coupling gives $B(\mu)\lesssim\delta_T(\mu)$, and a
multiscale distributional decomposition gives
$\delta_T(\mu)\lesssim\int_0^\infty\mathrm{RD}_\mu(t)\,dt$.  Taking
the supremum over the $\mu$ and applying the corresponding minimax
decomposition yields the set-level assertion.

Here is how the three comparisons above are distributed through the
paper.  Sections~\ref{sec:first-area} and \ref{sec:rate-distortion}
first prove \cref{thm:cauchy-areas}, and hence
$\int_0^\infty\mathrm{RD}_\mu(t)\,dt\lesssim b(T)$ for every $\mu$.
Section~\ref{sec:direct-rd-bernoulli} proves
$\delta_T(\mu)\lesssim\int_0^\infty\mathrm{RD}_\mu(t)\,dt$ and uses a
minimax argument to prove \cref{thm:rd-bernoulli}.  Finally,
Section~\ref{sec:distributional-completion} applies the set-level
estimate to uniform laws on empirical-distribution fibers to obtain
$\int_0^\infty\mathrm{RD}_\mu(t)\,dt\lesssim B(\mu)$, proves
$B(\mu)\lesssim\delta_T(\mu)$ by Gaussianization, and completes the
prescribed-law part of
\cref{thm:distributional-bernoulli-rd}.

At the set level, \cref{thm:cauchy-areas} gives
$R(T)\lesssim b(T)$, \cref{thm:rd-bernoulli} gives
$\Lambda(T)\lesssim R(T)$, and the elementary $\ell_1$ and Gaussian
bounds applied to a decomposition $T\subseteq T_1+T_2$ give
$b(T)\lesssim\Lambda(T)$.  The two non-elementary set-level estimates
are stated next.

\begin{theorem}[Bernoulli width dominates the rate--distortion area]
\label{thm:cauchy-areas}
For every finite $T\subset\mathbb R^n$ and every 
$\mu\in\cP(T)$,
\begin{align}
  b(T)
  &\ge
  \frac{1}{4\pi}
  \int_0^\infty \mathrm{cMMSE}_\mu(t)\,dt,
  \label{eq:first-area-main}\\
  \int_0^\infty \mathrm{cMMSE}_\mu(t)\,dt
  &\ge
  0.05\int_0^\infty\mathrm{RD}_\mu(t)\,dt.
  \label{eq:rd-main}
\end{align}
Consequently,
\begin{equation}
  b(T)\ge\frac{1}{80\pi}R(T).
  \label{eq:combined-main}
\end{equation}
\end{theorem}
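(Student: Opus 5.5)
The two inequalities will be proved separately; \eqref{eq:combined-main} then follows because $\frac{1}{4\pi}\cdot 0.05=\frac{1}{80\pi}$ and $\mu$ is arbitrary. The plan mirrors the Bayesian proof of the Gaussian MMT in \cite{Zadik2026BayesianMMT}, with the Gaussian channel replaced by the Cauchy channel \eqref{eq:product-cauchy-channel}.

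\emph{First inequality \eqref{eq:first-area-main}.} The idea is to write a Cauchy variable as a scale mixture of Rademachers and let a posterior replica serve as the maximizer. If $Z_i$ is standard Cauchy, then $\operatorname{sgn}(Z_i)$ is Rademacher and independent of $|Z_i|$. Set $X^{(1)}\sim P_{X\mid Y_t}$. Since $X^{(1)}\in T$,
\[
  \mathbb E\langle\varepsilon,X^{(1)}\rangle\le b(T)
\]
for any Rademacher vector $\varepsilon$, and we may take $\varepsilon=\operatorname{sgn}(Z)$. I would then integrate in $t$ against a suitable weight to get a quantity comparable to $\mathbb E\langle g(Z/\cdot), X^{(1)}-X\rangle$, where centering by $X$ costs nothing because $X$ is independent of $Z$.

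The core computation should come from Gaussian-style integration by parts for the Cauchy law. For fixed $s$, the map $t\mapsto \mathbb E[\operatorname{sgn}(Z_i)\,X^{(1)}_i]$ is related by Stein-type identities to the posterior variance of $X_i$ at scale $t$. Combined with the Nishimori identity $\mathbb E f(X,X^{(1)})=\mathbb E f(X^{(1)},X^{(2)})$, this should give a bound of the form
\[
  b(T)\ge\int_0^\infty \tfrac{c}{t^2}\,\mathbb E\bigl[|X_i-X^{(1)}_i|^2\wedge t^2\bigr]\,dt\ \text{ per coordinate}.
\]
The cap arises because the Cauchy score $z\mapsto 2z/(1+z^2)$ is bounded. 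Summing over coordinates gives $\mathbb E\varphi_t(X,X^{(1)})/(\text{const})$ integrated in $t$. Finally, the replica is one particular estimator, so its risk is at least $\mathrm{cMMSE}_\mu(t)$; since we need a lower bound on $b(T)$ in terms of cMMSE, this is the right direction.

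\emph{Second inequality \eqref{eq:rd-main}.} Here I would use the Cauchy information--estimation inequality of Appendix~\ref{sec:i-MMSE}. I expect it to read
\[
  -\frac{d}{dt}I(X;Y_t)\le C\,\frac{\mathrm{cMMSE}_\mu(t)}{t},
\]
or an integrated form of this. Take $\widehat X=X^{(1)}$ at a scale comparable to $t$. This is a self-coupling with both marginals $\mu$, so
\[
  \mathrm{RD}_\mu(t)\le I(X;X^{(1)})+\mathbb E\varphi_t(X,X^{(1)}).
\]
Data processing gives $I(X;X^{(1)})\le I(X;Y_{t})$. For the distortion term, the Nishimori identity and the triangle inequality give $\mathbb E\varphi_t(X,X^{(1)})\le 4\,\mathrm{cMMSE}_\mu(t)$, using that the capped loss is quasi-metric up to a factor of $4$.

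It then remains to bound $\int_0^\infty I(X;Y_t)\,dt$. Integration by parts gives $\int_0^\infty t\,(-\tfrac{d}{dt}I)\,dt$. This is finite because $I(X;Y_t)\to0$ fast enough as $t\to\infty$ (since $T$ is bounded) and $I\le H(\mu)$ as $t\to0$. Applying the I--cMMSE inequality then bounds it by $C\int_0^\infty \mathrm{cMMSE}_\mu(t)\,dt$.

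\emph{Main obstacle.} The delicate steps are the exact Cauchy integration by parts in the first inequality and tracking the constants to reach $1/(4\pi)$ and $0.05$. The step I expect to be hardest is verifying that the appendix's inequality yields the capped loss $\varphi_t$ rather than the full quadratic loss. Assuming that appendix result, the remaining work is careful calculus. I would first try to obtain the $1/(4\pi)$ constant from the identity $\mathbb E|Z|^{-1}\mathbbm 1\{\cdot\}$, integrated against the Cauchy density.
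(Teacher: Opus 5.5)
Your plan for \eqref{eq:rd-main} is the paper's argument: the posterior replica $X_t^{(1)}$ is a self-coupling, $I(X;X_t^{(1)})\le I_\mu(t)$ by data processing, and $\E\varphi_t(X,X_t^{(1)})\le 4\,\mathrm{cMMSE}_\mu(t)$ by Nishimori plus the capped quasi-triangle inequality. Note that the appendix gives the tail form $I_\mu(t)\le 16\int_t^\infty \mathrm{cMMSE}_\mu(s)\,s^{-1}\,ds$ (it bounds $I_\mu''$, not $-I_\mu'$ pointwise). Tonelli then gives $\int_0^\infty I_\mu\le 16\int_0^\infty\mathrm{cMMSE}_\mu$ directly, with no boundary term at infinity, and $16+4=20$ yields $0.05$.

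The genuine gap is in \eqref{eq:first-area-main}. The inequality $b(T)\ge\E\langle\sgn(Z),X_t^{(1)}\rangle$ is a \emph{single-scale} statement. By the Nishimori identity one computes exactly
\[
  \E\langle\sgn(Z),X_t^{(1)}\rangle
  =\sum_{i=1}^n\E\Bigl[\,|X^{(1)}_{t,i}-X_i|\,\mathbf{1}\{Y_{t,i}\ \text{lies between}\ X_i\ \text{and}\ X^{(1)}_{t,i}\}\Bigr].
\]
This quantity is at most $B(\mu)$, since $X_t^{(1)}\sim\mu$. It tends to $0$ both as $t\to0$ (the posterior concentrates on $X$) and as $t\to\infty$ (the posterior reverts to the prior, independent of $Z$). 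Averaging over $t$ against a probability weight gives at most $\sup_t$ of these terms, and a weight of infinite mass destroys the inequality. By contrast, $\int_0^\infty\mathrm{cMMSE}_\mu(t)\,dt=\int_0^\infty t\,\mathrm{cMMSE}_\mu(t)\,\frac{dt}{t}$ adds up contributions from every dyadic scale.

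No Stein-type identity bridges this; it is the Sudakov-versus-chaining gap. In the Gaussian analogue, the same quantity is exactly $\E\langle G,X_t^{(1)}\rangle=\mathrm{mmse}(t)/t$. For sets in which $L$ scales contribute comparably, $\sup_t\mathrm{mmse}(t)/t=O(1)$ while $\int_0^\infty\mathrm{mmse}(t)\,t^{-2}\,dt$ is of order $L$. An example is a Minkowski sum of $L$ orthogonal blocks, the $j$th consisting of $e^{4^j}$ orthogonal vectors of length $2^{-j}$, with uniform $\mu$.

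What you are missing is a potential whose total variation in $t$ equals $b(T)$, so that $b(T)$ becomes an unnormalized area. The paper fixes $x\in T$, sets $K=\conv(T)$ and $d_K(y)=\min_{u\in K}\norm{y-u}_1$, and uses the least-absolute-deviation potential $\Phi_x(t)=\E[t\norm{Z}_1-d_K(x+tZ)]$. This potential has the following properties:
\begin{itemize}
\item it is finite despite $\E|Z_i|=\infty$, lying between $0$ and $\max_{u\in K}\norm{u-x}_1$;
\item it is nondecreasing with $\Phi_x(0)=0$;
\item $\Phi_x(t)\to\E\sup_{u\in K}\langle\sgn(Z),u-x\rangle=b(T)$, so $b(T)=\int_0^\infty\Phi_x'(t)\,dt$.
\end{itemize}
Its derivative is $\E\sum_i[|Z_i|-Z_i\,\partial_i d_K(x+tZ)]$, with nonnegative summands.

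To lower-bound it, let $\zeta_i(Y_{t,-i})$ be the left endpoint of the minimizer interval of $d_K$ along the $i$th coordinate line through $Y_t$. It ignores $Y_{t,i}$, so $Z_i$ stays standard Cauchy given $Y_{t,-i}$. Between $x_i$ and $\zeta_i$ the slope has the favorable sign, so the $i$th summand is at least
\[
  \frac1\pi\int_0^r\frac{z}{1+z^2}\,dz=\frac1{2\pi}\log(1+r^2)\ge\frac1{4\pi}(1\wedge r^2),\qquad r=|x_i-\zeta_i|/t.
\]
This is where $1/(4\pi)$ and the cap come from; neither comes from a bounded score or from $\E|Z|^{-1}\mathbf{1}\{\cdot\}$. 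Since this leave-one-out estimator is feasible in \eqref{eq:cauchy-cmmse}, averaging over $X\sim\mu$ gives $\E\,\Phi_X'(t)\ge\frac1{4\pi}\mathrm{cMMSE}_\mu(t)$. Integrating in $t$ then yields \eqref{eq:first-area-main}.
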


The Gaussian argument that motivates \cref{thm:cauchy-areas} can be
summarized without assuming prior familiarity with information theory.
In a Gaussian additive channel, the Gaussian width can be expressed as
the area under the estimation-error curve of a least-squares estimator (see also \cite{pathak2026gaussian}).
Replacing that estimator by the Bayes-optimal estimator can only
decrease the error.  The Gaussian mutual-information/minimum
mean-square error (I--MMSE) identity \cite{GuoShamaiVerdu} then relates
this Bayes risk to the decay of mutual information, which in turn
controls a rate--distortion quantity.

For Bernoulli processes, an interpolation potential based on the least
absolute deviation estimator in the Cauchy channel yields the first
inequality in \cref{thm:cauchy-areas}.  Its endpoint identity is
\[
  \lim_{t\to\infty}
  \mathbb E\bigl[
    \|Y_t-X\|_1
    -\|Y_t-\Pi_K(Y_t)\|_1
  \bigr]
  =
  b(T),
  \qquad
  \Pi_K(y)\in
  \argmin_{u\in\operatorname{conv}(T)}\|y-u\|_1.
\]
Differentiating the potential produces a capped estimation risk.  The
Cauchy information--estimation inequality proved in Appendix
\ref{sec:i-MMSE}, together with a posterior-replica coupling, gives the
second inequality.  Motivated by the question raised in
\cite{Verdu2023}, we derive this Cauchy-channel relation for finite
input distributions and obtain the required bound in terms of integrated capped
Bayes risk.

\begin{table}[t]
\centering
\small
\renewcommand{\arraystretch}{1.3}
\begin{tabularx}{\linewidth}{
  @{}
  >{\raggedright\arraybackslash}p{0.19\linewidth}
  >{\raggedright\arraybackslash}X
  >{\raggedright\arraybackslash}X
  @{}
}
\toprule
&
\textbf{Gaussian MMT}
&
\textbf{Bernoulli theorem}
\\
\midrule
Additive channel
&
$Y_t^{\rm G}=X+tG$, $G\sim N(0,I_n)$
&
$Y_t^{\rm C}=X+tZ$, $Z_i$ i.i.d.\ standard Cauchy
\\[0.4em]
Natural loss
&
$\|x-\widehat x\|_2^2$
&
$\varphi_t(x,\widehat x)$
\\[0.4em]
Width-to-risk step
&
Gaussian width controls integrated Gaussian MMSE
\cite{Zadik2026BayesianMMT}
&
Bernoulli width controls integrated Cauchy cMMSE
(\cref{sec:first-area})
\\[0.4em]
Information--estimation step
&
Gaussian I--MMSE identity \cite{GuoShamaiVerdu}
&
Cauchy information--estimation inequality
(Appendix \ref{sec:i-MMSE})
\\[0.4em]
Risk-to-rate--distortion step
&
Integrated MMSE controls a rate--distortion functional
\cite{Zadik2026BayesianMMT}
&
Integrated cMMSE controls the rate--distortion functional
(\cref{sec:rate-distortion})
\\[0.4em]
Functional step
&
The rate--distortion dual controls
$\gamma_2(T,\|\cdot\|_2)$ \cite{Liu2025}
&
$R(T)$ controls $\Lambda(T)$
(\cref{sec:direct-rd-bernoulli})
\\
\bottomrule
\end{tabularx}
\caption{Dictionary between the Bayesian proofs in the Gaussian and
Bernoulli settings.}
\label{tab:bayesian-dictionary}
\end{table}

\begin{theorem}[Rate--distortion area dominates the Bernoulli functional]
\label{thm:rd-bernoulli}
There is a universal constant $c>0$ such that every finite
$T\subset\mathbb R^n$ satisfies
\begin{equation}
  R(T)\ge c\,\Lambda(T).
  \label{eq:rd-bernoulli}
\end{equation}
\end{theorem}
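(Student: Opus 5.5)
Given $\int_0^\infty \mathrm{RD}_\mu(t)\,dt\le R(T)$ for every $\mu\in\cP(T)$, the plan is to show $\Lambda(T)\lesssim \sup_\mu \delta_T(\mu)$ by a minimax argument, and $\delta_T(\mu)\lesssim\int_0^\infty\mathrm{RD}_\mu(t)\,dt$ for each $\mu$ by a multiscale construction. The second comparison is the heart of the matter.

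\emph{Step 1: the per-law decomposition.} Fix $\mu$ and write $r_k:=\mathrm{RD}_\mu(2^{-k})$ for $k\in\mathbb Z$. Then $\sum_k 2^{-k}r_k\asymp\int_0^\infty\mathrm{RD}_\mu(t)\,dt$ by monotonicity of $t\mapsto\mathrm{RD}_\mu(t)$: $\varphi_t$ decreases in $t$, so $\mathrm{RD}_\mu$ is nonincreasing. At each scale $t_k=2^{-k}$, take a near-optimal self-coupling $(X,\widehat X_k)$ with $I(X;\widehat X_k)+\mathbb E\varphi_{t_k}(X,\widehat X_k)\le 2r_k$. I would chain these couplings into a nested sequence of approximations $\pi_k(X)$, as in the Gaussian rate--distortion proof of \cite{Liu2025}, conditioning successively so that the information increments stay controlled. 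At scale $t_k$, split each coordinate difference $d_i=\pi_{k+1}(X)_i-\pi_k(X)_i$ into two pieces:
\begin{itemize}
\item the large part $d_i\mathbbm 1\{|d_i|>t_k\}$, which goes into $a(X)$;
\item the clipped part $d_i\mathbbm 1\{|d_i|\le t_k\}$, which goes into the Gaussian piece.
\end{itemize}
Large coordinates are charged at most $|d_i|$, and the capped loss counts them with weight $1$. Summing $t_k\cdot\#\{i:|d_i|>t_k\}$ over dyadic levels should bound $\mathbb E\|a(X)\|_1$ by $\sum_k t_k\mathbb E\varphi_{t_k}\lesssim\sum_k t_k r_k$. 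One caveat: a single large jump has to be attributed once, to the first scale at which it exceeds the cap, so that the telescoping $\ell_1$ mass is controlled.

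\emph{Step 2: Gaussian part.} Bound $G((\mathrm{id}-a)_\#\mu)$ by Gaussian chaining in its prescribed-law form. For a coupling, the increment at level $k$ has $\ell_2$ norm squared at most $t_k^2\varphi_{t_k}$. Its correlation with $G_0$ is at most
\[
\sqrt{\mathbb E\|\Delta_k\|_2^2}\cdot\sqrt{2\,I_k}
\]
by the standard Donsker--Varadhan/transportation bound, where $I_k$ is the mutual information spent at level $k$. This gives roughly $t_k\sqrt{\mathbb E\varphi_{t_k}\cdot I_k}\le t_k r_k$. Summing over $k$ yields $G\lesssim\sum_k t_k r_k$. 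The coarsest scale, $t\gtrsim\diam$, contributes nothing, and the finest scale vanishes since $\mathrm{RD}_\mu(t)\le H(\mu)$ together with the exact coupling $\widehat X=X$ makes the residual zero.

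\emph{Step 3: minimax.} Define
\[
F(a,\mu)=\mathbb E_\mu\|a\|_1+G((\mathrm{id}-a)_\#\mu).
\]
By the MMT (\cref{thm:MMT-main}) in Talagrand's fixed-law form, $\sup_\mu G((\mathrm{id}-a)_\#\mu)\asymp\gamma_2((\mathrm{id}-a)(T))$. Hence $\Lambda(T)\le\inf_a\sup_\mu F(a,\mu)$, up to constants, with $T_1=a(T)$. This uses $\sup_\mu\mathbb E_\mu\|a\|_1=\sup_T\|a\|_1$, but $F$ pairs both terms with the same $\mu$. So I need $\inf_a\sup_\mu F\lesssim\sup_\mu\inf_a F$.

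The minimax step is where I expect the main obstacle. $F$ is linear in $\mu$ only for the first term, and $G$ is neither concave in $\mu$ nor convex in $a$. I would first try to replace $G$ by a convex, majorizing-measure-type surrogate, $\mathbb E_\mu$ of a chaining functional of $\mathrm{id}-a$, which is linear in $\mu$ and convex in $a$ up to constants. I would then apply Sion's theorem on the compact simplex $\cP(T)$. If that fails, the fallback is to transfer to the uniform law on type classes via type lifting, where the $\sup$ is attained simultaneously.
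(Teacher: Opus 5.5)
Your architecture matches the paper's: a per-law bound $\delta_T(\mu)\lesssim\int_0^\infty\mathrm{RD}_\mu(t)\,dt$, then minimax, then the MMT. However, the argument does not close at the step you flag, and the obstruction you cite is not real.

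\textbf{The minimax step (Step 3).} You assert that $G$ is neither convex in $a$ nor concave in $\mu$. In fact both hold.
By gluing, $G((\mathrm{id}-a)_{\#}\mu)=\sup_\pi\mathbb E_\pi\langle X-a(X),G_0\rangle$, where $\pi$ ranges over couplings of $X\sim\mu$ (not of $X-a(X)$) with $G_0\sim N(0,I_n)$.
This feasible set does not depend on $a$. So the map is an everywhere-finite supremum of affine functions of $a\in(\mathbb R^n)^T$, hence convex and continuous.
For fixed $a$, a mixture of couplings for $\mu_1,\mu_2$ is a coupling for the mixture, because the Gaussian marginal is common. This gives concavity in $\mu$. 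The fixed Gaussian marginal also gives tightness, hence upper semicontinuity.
Sion's theorem on the compact simplex $\cP(T)$ then yields $\inf_a\sup_\mu F(a,\mu)=\sup_\mu\delta_T(\mu)$ directly, with no surrogate and no type lifting.
Moreover, $\sup_\mu G((\mathrm{id}-a)_{\#}\mu)=W((\mathrm{id}-a)(T))$ exactly (take the law of a measurable maximizer $x(G_0)$). So you only need the standard bound $\gamma_2\lesssim W$, not a fixed-law MMT.
As written, your Step 3 leaves an unspecified surrogate in place of this argument, so the proof is incomplete there.

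\textbf{The large-increment charging (Step 1).} This step needs a different mechanism from the one you describe. Putting $d_i\mathbf 1\{|d_i|>t_k\}$ into $a$ and charging each such increment its own size fails.
Over a run of consecutive scales at which the approximations stay far from $X_i$, $|d_i|$ can be of order $\diam_\infty(T)$ at every scale. The distortion, by contrast, pays only $t_k$ per scale, a geometric total. Counting coordinates does not bound their $\ell_1$ mass.
The fix is to put only the clipped increments $\Delta_k=\operatorname{clip}_{3t_k/2}(U_{k+1}-U_k)$ into the Gaussian part. The whole residual $X-\widetilde V$, with $\widetilde V=U_{k_0}+\sum_k\Delta_k$, goes into $a$.
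Call a scale bad for coordinate $j$ if $|U_{k,j}-X_j|>t_k$. On a maximal bad run $p,\dots,q$, the unclipped increments telescope to $U_{q+1,j}-U_{p-1,j}$. This is at most $t_{p-1}+t_{q+1}$ because the bracketing scales are good. So the whole run costs $O(t_p)$, charged to one unit of distortion at scale $p$.

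\textbf{Two smaller points.}
\begin{enumerate}
\item Nested, successively refined approximations are not needed, and building them with near-optimal cost at every scale is a nontrivial successive-refinement problem. Taking the near-optimal couplings $U_k$ conditionally independent given $X$ already gives $I(X;U_k,U_{k+1})\le I(X;U_k)+I(X;U_{k+1})$.
\item Since $a$ must be a deterministic map on $T$, you must average. Put $V(x)=\mathbb E[\widetilde V\mid X=x]$ and use Jensen for the $\ell_1$ part. For the Gaussian part, sample the $U_k$ given $X$ independently of $G_0$. Then $\mathbb E\langle V(X),G_0\rangle=\mathbb E\langle\widetilde V,G_0\rangle$ and $I(G_0;U_k,U_{k+1})\le I(X;U_k,U_{k+1})$.
\end{enumerate}
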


For a prescribed-law $\mu$, we prove that the rate--distortion integral controls
$\delta_T(\mu)$.  A minimax argument then converts
$\sup_{\mu}\delta_T(\mu)$ into a single decomposition of $T$ with
an $\ell_1$-bounded part and a part of controlled Gaussian width.
The Gaussian MMT turns the latter bound into
$\gamma_2$-control, proving \cref{thm:rd-bernoulli}.

\section{Bernoulli width and the Cauchy cMMSE area bound}
\label{sec:first-area}

The goal of this section is to prove \eqref{eq:first-area-main}, the
first comparison in \cref{thm:cauchy-areas}.  We construct an
interpolation potential whose total variation is $b(T)$, show that its
derivative controls the capped error of a leave-one-coordinate-out
estimator, and then average over the prior and optimize over estimators.

\subsection{Area theorem for Bernoulli width and absolute-deviation estimation}\label{sec:first-area_0}

We first construct the interpolation potential and identify its
endpoints, thereby expressing $b(T)$ as the integral of its derivative.

Consider the convex hull $K:=\conv(T)$ and define the
$\ell_1$-distance to $K$ by
\begin{equation}
  d_K(y):=\inf_{u\in K}\norm{y-u}_1.
  \label{eq:l1-distance}
\end{equation}

\begin{remark}
It is useful to adopt an information-theoretic or statistical viewpoint.
Fix a Borel measurable selector
\[
  \Pi_K(y)\in\argmin_{u\in K}\|y-u\|_1,
  \qquad y\in\mathbb R^n.
\]
Such a selector exists because $K$ is compact and the objective is
jointly continuous.  In the Cauchy channel
\eqref{eq:product-cauchy-channel}, $\Pi_K(Y_t)$ is a least absolute
deviation estimator and
\[
  d_K(y)=\|\Pi_K(y)-y\|_1.
\]
\end{remark}

Now, fix a ``planted'' point $x\in T$.  For $t\ge0$, define the
interpolation potential
\begin{equation}
  \Phi_x(t)
  :=
  \E_Z\left[
    t\norm{Z}_1-d_K(x+tZ)
  \right].
  \label{eq:first-area-potential-formal}
\end{equation}
Equivalently,
\begin{equation}
  \Phi_x(t)
  =
  \E_Z\sup_{u\in K}
  \left\{
    t\norm{Z}_1-\norm{x+tZ-u}_1
  \right\}.
  \label{eq:first-area-potential}
\end{equation}

\begin{remark}
Notice that although $\E\norm Z_1=\infty$, $\Phi_x(t)$ is well-defined.
Indeed, for every $z\in\mathbb R^n$, $t\ge0$, and $x\in T$, choosing
$u=x$ in \cref{eq:first-area-potential} for the lower bound and using
the triangle inequality for the upper bound give
\begin{equation}
  0
  \le t\norm z_1-d_K(x+tz)
  \le
  \sup_{u\in K}\norm{u-x}_1.
  \label{eq:potential-bound}
\end{equation}
\end{remark}

The first key lemma connects the Bernoulli width $b(T)$ with
$\Phi_x(t)$.

\begin{lemma}[Cauchy-area theorem for Bernoulli width]
\label{lem:first-area-endpoints}
For any $x \in T$, the function $\Phi_x: [0,\infty) \rightarrow [0,+\infty)$ is nondecreasing and concave on $[0,\infty)$ with $\Phi_x(0)=0$. Moreover, it satisfies
\begin{equation}
  b(T)=\lim_{t \to \infty} \Phi_x(t)=\int_0^\infty \Phi_x'(t)\,dt.
  \label{eq:first-area-integral}
\end{equation}
The derivative is understood almost everywhere.
\end{lemma}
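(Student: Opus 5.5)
The plan is to study the pointwise function $f_z(t):=t\|z\|_1-d_K(x+tz)$ for each fixed $z\in\mathbb R^n$, establish its properties, and then average over $Z$. By \eqref{eq:potential-bound}, $f_z$ takes values in $[0,\sup_{u\in K}\|u-x\|_1]$, so all expectations below are finite and dominated convergence is available.

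\textbf{Concavity and $\Phi_x(0)=0$.} Since $x\in T\subseteq K$, we have $d_K(x)=0$, and hence $\Phi_x(0)=0$. The function $d_K$ is convex, being the infimal convolution of $\|\cdot\|_1$ with the indicator of the convex set $K$. The map $t\mapsto d_K(x+tz)$ is therefore convex, so $f_z$ is concave as a linear function minus a convex one. Averaging over $Z$ preserves concavity, so $\Phi_x$ is concave on $[0,\infty)$.

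\textbf{Monotonicity.} A concave function on $[0,\infty)$ that is bounded below cannot have a negative slope anywhere. Otherwise concavity would force it to decrease at least linearly beyond that point and become unbounded below. Since $f_z\ge0$, each $f_z$ is nondecreasing, and so is $\Phi_x$. (Applying the same argument directly to $\Phi_x\ge0$ works equally well.)

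\textbf{The limit.} For the limit as $t\to\infty$, I would compute $f_z(t)$ exactly coordinatewise:
\[
f_z(t)=\sup_{u\in K}\sum_i\bigl(t|z_i|-|x_i-u_i+tz_i|\bigr).
\]
For fixed $u\in K$ and any $i$ with $z_i\neq0$, once $t|z_i|>|x_i-u_i|$ the $i$-th summand equals $\sgn(z_i)(u_i-x_i)$. Because $K$ is compact, $\sup_{u\in K}\|u-x\|_\infty<\infty$. Hence for $t\ge t_0(z)$ with $t_0(z)$ uniform in $u$, and for almost every $z$ (all coordinates nonzero),
\[
f_z(t)=\sup_{u\in K}\langle \sgn(z),u-x\rangle=\sup_{u\in T}\langle\sgn(z),u\rangle-\langle \sgn(z),x\rangle,
\]
where the sup over the convex hull $K$ equals the sup over $T$. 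By monotonicity and boundedness, dominated convergence gives
\[
\lim_{t\to\infty}\Phi_x(t)=\E\sup_{u\in T}\langle\sgn(Z),u\rangle-\E\langle\sgn(Z),x\rangle.
\]
The Cauchy law is symmetric, so $\sgn(Z)\sim\mathrm{Rad}^{\otimes n}$. The last term vanishes and the first equals $b(T)$.

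\textbf{The integral identity.} A concave nondecreasing function is locally Lipschitz on $(0,\infty)$ and continuous at $0$. Indeed, $0\le\Phi_x(t)\le\sup\|f_z\|_\infty$ and $\Phi_x$ is concave, so continuity at $0$ from the right follows from monotonicity together with $\Phi_x(t)\ge (t/s)\Phi_x(s)\to0$ bounded above by… I would instead verify continuity directly: $f_z(t)\to f_z(0)=0$ as $t\downarrow0$ because $d_K$ is continuous, and dominated convergence then gives $\Phi_x(t)\to0$. Consequently $\Phi_x(T')=\int_0^{T'}\Phi_x'(t)\,dt$ for every $T'$, with $\Phi_x'\ge0$ almost everywhere. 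Letting $T'\to\infty$ with monotone convergence yields \eqref{eq:first-area-integral}.

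\textbf{Main obstacle.} The only delicate step is the uniformity in $u$ needed to pass to the limit inside the supremum. This is handled by the compactness of $K$, which gives the explicit threshold $t_0(z)=\max_{i}\sup_{u\in K}|x_i-u_i|/|z_i|$.
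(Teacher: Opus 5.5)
Your proof is correct and follows essentially the same route as the paper: concavity of $f_z(t)=t\|z\|_1-d_K(x+tz)$, the bounds $0\le f_z\le D_x$, eventual equality $f_z(t)=\sup_{u\in K}\langle\sgn(z),u-x\rangle$ for $z$ with nonzero coordinates (uniformly in $u$ by compactness), dominated convergence, continuity at $0$, and absolute continuity on $(0,\infty)$. The only deviation is that you get monotonicity from concavity plus the lower bound $f_z\ge0$ rather than from the $1$-Lipschitz property of $d_K$; both work. You should also delete the abandoned remark about $\Phi_x(t)\ge(t/s)\Phi_x(s)$, since it gives only a lower bound and is superseded by your dominated-convergence argument as $t\downarrow0$.
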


\begin{proof}
For $z\in\mathbb R^n$, set
\[
  F_z(t):=t\|z\|_1-d_K(x+tz),
  \qquad t\ge0.
\]
Clearly, $F_z(0)=-d_K(x)=0$.  Since $d_K$ is convex, $F_z$ is
concave.  Moreover, if $0\le t_2\le t_1$, the $1$-Lipschitz property
of $d_K$ with respect to $\|\cdot\|_1$ gives
\[
  d_K(x+t_1z)-d_K(x+t_2z)
  \le (t_1-t_2)\|z\|_1,
\]
and hence $F_z(t_1)\ge F_z(t_2)$.  Taking expectations shows that
$\Phi_x$ is concave and nondecreasing.

Let
\[
  D_x:=\sup_{u\in K}\|u-x\|_1<\infty.
\]
By \cref{eq:potential-bound}, $0\le F_z(t)\le D_x$ for every $z$ and
$t\ge0$.  For every fixed $z$, $F_z(t)\to0$ as $t\downarrow0$.
Dominated convergence therefore gives
$\Phi_x(t)\to0=\Phi_x(0)$ as $t\downarrow0$.

Now fix $z$ such that $z_i\ne0$ for every $i$.  Compactness of $K$
implies, for every $i$,
\[
  \lim_{t\to\infty}
  \sup_{u\in K}
  \left|
    |x_i+tz_i-u_i|-t|z_i|
    -\sgn(z_i)(x_i-u_i)
  \right|
  =0.
\]
Indeed, when $z_i>0$, the expression vanishes identically for all
sufficiently large $t$, uniformly in $u\in K$, and the case $z_i<0$
is identical.  Summing over the coordinates and using uniformity in
$u$ gives
\[
  \lim_{t\to\infty}F_z(t)
  =
  \sup_{u\in K}\langle\sgn(z),u-x\rangle.
\]
Since $Z$ has independent standard Cauchy coordinates,
$\sgn(Z)$ is a Rademacher vector.  Dominated convergence, with the
dominating constant $D_x$, yields
\begin{align*}
  \lim_{t\to\infty}\Phi_x(t)
  &=
  \mathbb E\sup_{u\in K}\langle\varepsilon,u-x\rangle \\
  &=
  \mathbb E\sup_{u\in T}\langle\varepsilon,u\rangle
  -\mathbb E\langle\varepsilon,x\rangle \\
  &=b(T).
\end{align*}
Here we used that a linear functional has the same supremum over
$T$ and over $\conv(T)$, and that $\mathbb E\varepsilon=0$.

As a finite concave function, $\Phi_x$ is absolutely continuous on
every compact subinterval of $(0,\infty)$.  Thus, for
$0<a<b<\infty$,
\[
  \Phi_x(b)-\Phi_x(a)
  =
  \int_a^b\Phi_x'(t)\,dt.
\]
The derivative is nonnegative almost everywhere because $\Phi_x$ is
nondecreasing.  Letting $a\downarrow0$ and $b\uparrow\infty$, and using
the continuity at zero and the endpoint limit proved above, gives
\[
  \int_0^\infty\Phi_x'(t)\,dt
  =
  \lim_{b\to\infty}\Phi_x(b)-\Phi_x(0)
  =
  b(T).
\]
\end{proof}

\begin{remark}
The preceding proof applies whenever $Z$ has i.i.d.\ coordinates
with a symmetric continuous distribution.  The Cauchy assumption
becomes essential in the next subsection.
\end{remark}

\subsection{The potential's derivative dominates the error of a leave-one-coordinate-out estimator}\label{sec:second-area}

We next connect the derivative of $\Phi_x(t)$ with the error of a
leave-one-coordinate-out estimator in the Cauchy channel.

Fix $i\in[n]$ and $w\in\R^{n-1}$.  Define
\begin{equation}
g_{i,w}(u):=d_K(w_1,\ldots,w_{i-1},u,w_i,\ldots,w_{n-1}).
  \label{eq:profile}
\end{equation}
The function $g_{i,w}$ is convex, $1$-Lipschitz, and coercive, so
its set of minimizers is a nonempty compact interval.  Denote its left
endpoint by $\zeta_i(w)$.  This is a Borel measurable tie-breaking
rule: every minimizer lies in the fixed compact interval
\[
  I_i:=\left[\min_{v\in K}v_i,\max_{v\in K}v_i\right],
\]
and the measurable maximum theorem applied to the jointly continuous
map $(w,u)\mapsto g_{i,w}(u)$ shows that the left endpoint of its
argmin over $I_i$ is measurable.

For $y=(y_1,\ldots,y_n)\in\mathbb R^n$, write
\[
  y_{-i}:=(y_1,\ldots,y_{i-1},y_{i+1},\ldots,y_n).
\]
Define
\begin{equation}
  \widehat x^{\mathrm{loc}}(y):=\zeta_i(y_{-i}),
  \qquad
  \widehat x^{\mathrm{loc}}(y)
  :=
  \bigl(
    \widehat x_1^{\mathrm{loc}}(y),\ldots,
    \widehat x_n^{\mathrm{loc}}(y)
  \bigr).
  \label{eq:pivotal-estimator}
\end{equation}
Each coordinate ignores the corresponding coordinate of $y$, and the
measurability of the functions $\zeta_i$ shows that
$\widehat x^{\mathrm{loc}}:\mathbb R^n\to\mathbb R^n$ is a Borel
measurable estimator.  This coordinatewise estimator need not belong
to $K$ and should not be confused with the $K$-valued selector
$\Pi_K$ above.

\begin{remark}
We interpret $\widehat x_i^{\mathrm{loc}}(y)$ as a
leave-one-coordinate-out
$\ell_1$-estimator of $x_i$ from the Cauchy observation
$Y=y=x+tZ$.  It chooses the $i$-th coordinate substitution that
minimizes the $\ell_1$-distance to $K$.  This estimator arises
naturally from the derivative of the potential $\Phi_x(t)$.
\end{remark}
The following key lemma shows that, for almost every $t>0$, the
derivative of $\Phi_x(t)$ controls the expected capped quadratic error
of this estimator, measured by $\varphi_t$ from
\cref{eq:capped-distortion}.
\begin{lemma}
\label{lem:cauchy-pivotal}
For any $x\in T$ and almost every $t>0$,
\begin{equation}
  \Phi_x'(t)
  \ge
  \frac{1}{4\pi}
  \E_Z\varphi_t\bigl(x,\widehat x^{\mathrm{loc}}(x+tZ)\bigr).
  \label{eq:cauchy-pivotal-bound}
\end{equation}
\end{lemma}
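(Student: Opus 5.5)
The plan is to differentiate $\Phi_x$ under the expectation, write the derivative as a sum of coordinatewise contributions, and bound each contribution from below by conditioning on all Cauchy coordinates except the $i$-th. The function $d_K$ is convex and $1$-Lipschitz with respect to $\|\cdot\|_1$. Hence it is differentiable Lebesgue-a.e., with gradient in $[-1,1]^n$, the dual $\ell_\infty$ ball.

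Since $Z$ has a density, Fubini shows that for a.e.\ $t>0$ the point $x+tZ$ is almost surely a differentiability point of $d_K$. For such $t$, the concave functions $F_z$ of the proof of \cref{lem:first-area-endpoints} have difference quotients that are monotone and bounded in absolute value by $2\|z\|_1$. That bound is not integrable, but the quotients are nonnegative and monotone in the increment. So monotone convergence, using left and right quotients together with concavity of $\Phi_x$, lets me exchange derivative and expectation at a.e.\ $t$. This gives
\[
\Phi_x'(t)=\sum_{i=1}^n \mathbb E\bigl[\,|Z_i|-Z_i\,\partial_i d_K(x+tZ)\bigr].
\]
Each summand is nonnegative pointwise because $|\partial_i d_K|\le 1$. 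I expect this interchange to be the most delicate, though routine, step.

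Next fix $i$ and condition on $Z_{-i}$, setting $w:=x_{-i}+tZ_{-i}$, which is independent of $Z_i$. Then $\partial_i d_K(x+tZ)=g_{i,w}'(x_i+tZ_i)$, where $h:=g_{i,w}'$ is nondecreasing with values in $[-1,1]$. Let $\zeta=\zeta_i(w)=\widehat x_i^{\mathrm{loc}}(x+tZ)$, the left endpoint of the argmin of $g_{i,w}$. By convexity, $h<0$ on $(-\infty,\zeta)$ and $h\ge 0$ on $(\zeta,\infty)$.

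Take first the case $\zeta>x_i$. For $0<z<a:=(\zeta-x_i)/t$ we have $x_i+tz<\zeta$, hence $-zh\ge0$ and the integrand $|z|-zh$ is at least $|z|$. All other $z$ contribute nonnegatively, so the conditional expectation is at least
\[
\int_0^{a}\frac{z}{\pi(1+z^2)}\,dz=\frac{1}{2\pi}\log(1+a^2).
\]
The case $\zeta<x_i$ is symmetric, using $z\in(-|a|,0)$ and $h\ge0$ there. The case $\zeta=x_i$ is trivial.

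Concavity of $s\mapsto\log(1+s)$ on $[0,1]$ gives $\log(1+a^2)\ge(\log 2)\,(1\wedge a^2)$. So each conditional term is at least $\frac{\log 2}{2\pi}\bigl(1\wedge |x_i-\widehat x_i^{\mathrm{loc}}|^2/t^2\bigr)\ge\frac{1}{4\pi}(\cdots)$, since $\log 2/2>1/4$. This is the point where the Cauchy density enters, through the exact logarithmic integral. Taking expectation over $Z_{-i}$ and summing over $i$ yields \eqref{eq:cauchy-pivotal-bound}, because $\widehat x_i^{\mathrm{loc}}$ depends only on $y_{-i}$ by \eqref{eq:pivotal-estimator}.
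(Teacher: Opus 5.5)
Your proof is correct and follows essentially the same route as the paper. The main steps match: you write $\Phi_x'$ as a coordinatewise sum of nonnegative terms, condition on $Y_{t,-i}$, use the sign of $g_{i,w}'$ on either side of the left endpoint $\zeta_i(w)$, and evaluate the exact Cauchy integral $\frac{1}{2\pi}\log(1+r^2)$. There are two minor differences. You justify the derivative--expectation interchange pointwise, by monotone convergence of the nonnegative, monotone right difference quotients of the concave $F_z$, whereas the paper integrates $q_Z$ over $[a,b]$ with Tonelli and then differentiates. You also use concavity of $\log(1+s)$, which gives the slightly better constant $\frac{\log 2}{2\pi}$.
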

\begin{proof}
For fixed $z\in\mathbb R^n$, let
\[
  F_z(t):=t\|z\|_1-d_K(x+tz).
\]
The function $F_z$ is nondecreasing and concave.  It is also locally
absolutely continuous, since
\[
  |F_z(t)-F_z(s)|
  \le 2|t-s|\|z\|_1.
\]
Let $q_z(t):=F_z'(t)$ at points where the derivative exists, and set
$q_z(t):=0$ on the remaining null set.  Then $q_z(t)\ge0$.

The convex function $d_K$ is differentiable Lebesgue almost everywhere
on $\mathbb R^n$.  Moreover, because it is $1$-Lipschitz with respect
to $\|\cdot\|_1$, every gradient at a differentiability point satisfies
$\|\nabla d_K\|_\infty\le1$.  For every $t>0$, the random vector
$x+tZ$ has an absolutely continuous distribution.  Consequently,
Fubini's theorem and the chain rule give, for
$dt\otimes P_Z$-almost every $(t,z)$,
\[
  q_z(t)
  =
  \sum_{i=1}^n
  \left[
    |z_i|-z_i\partial_i d_K(x+tz)
  \right].
\]
Every summand on the right is nonnegative.

For $0<a<b<\infty$, absolute continuity of $F_z$ and Tonelli's theorem
therefore give
\begin{align*}
  \Phi_x(b)-\Phi_x(a)
  &=
  \mathbb E_Z\bigl[F_Z(b)-F_Z(a)\bigr] \\
  &=
  \mathbb E_Z\int_a^b q_Z(t)\,dt \\
  &=
  \int_a^b\mathbb E_Z q_Z(t)\,dt.
\end{align*}
It follows that, for almost every $t>0$,
\begin{equation}
  \Phi_x'(t)
  =
  \mathbb E_Z\sum_{i=1}^n
  \left[
    |Z_i|-Z_i\partial_i d_K(x+tZ)
  \right].
  \label{eq:first-area-derivative}
\end{equation}
This argument does not require $\mathbb E|Z_i|<\infty$.

Fix such a $t$ and a coordinate $i$.  Write
\[
  W:=Y_{t,-i}=x_{-i}+tZ_{-i}.
\]
By Fubini's theorem, for $P_W$-almost every $w$, the identity
\[
  \partial_i d_K(w,x_i+tZ_i)
  =
  g_{i,w}'(x_i+tZ_i)
\]
holds almost surely in $Z_i$.  Notice also that $Z_i$ is independent
of $W$ and remains a standard Cauchy variable after conditioning on
$W=w$.

Fix such a $w$, and write
\[
  \zeta:=\zeta_i(w),
  \qquad
  r:=\frac{|x_i-\zeta|}{t}.
\]
The function $g_{i,w}$ is convex and $1$-Lipschitz, so
$|g_{i,w}'|\le1$ wherever its derivative exists.  In particular,
\[
  |z|-z\,g_{i,w}'(x_i+tz)\ge0
\]
for almost every $z\in\mathbb R$.

Suppose first that $x_i\le\zeta$.  For $0\le z\le r$, one has
$x_i+tz\le\zeta$.  Since $\zeta$ is the left endpoint of the minimizer
interval of $g_{i,w}$, convexity gives
\[
  g_{i,w}'(x_i+tz)\le0
\]
for almost every $z\in[0,r]$.  Hence
\[
  |z|-z\,g_{i,w}'(x_i+tz)\ge z
\]
on this interval, almost everywhere.  Using the standard Cauchy
density and discarding the nonnegative contribution from its
complement, we obtain
\begin{align}
  &\mathbb E_{Z_i}
  \left[
    |Z_i|-Z_i g_{i,w}'(x_i+tZ_i)
  \right]
  \notag\\
  &\qquad\ge
  \frac1\pi\int_0^r\frac{z}{1+z^2}\,dz
  =
  \frac{1}{2\pi}\log(1+r^2).
  \label{eq:cauchy-log-bound}
\end{align}
If $x_i\ge\zeta$, the same argument applies on $[-r,0]$, where a
derivative of $g_{i,w}$ is nonnegative.

Since $\widehat x_i^{\mathrm{loc}}(Y_t)=\zeta_i(W)$, we have shown
that, for $P_W$-almost every $w$,
\begin{align}
  &\mathbb E_{Z_i}
  \left[
    |Z_i|-Z_i\partial_i d_K(x+tZ)
    \,\middle|\,W=w
  \right]
  \notag\\
  &\qquad\ge
  \frac{1}{2\pi}
  \log\left(
    1+
    \frac{|x_i-\widehat x_i^{\mathrm{loc}}(Y_t)|^2}{t^2}
  \right).
  \label{eq:cauchy-log-bound_2}
\end{align}
For every $r\ge0$,
\begin{equation}
  \frac{1}{2\pi}\log(1+r^2)
  \ge
  \frac{1}{4\pi}(1\wedge r^2).
  \label{eq:log-dominates-cap}
\end{equation}
Indeed, $\log(1+r^2)\ge r^2/2$ when $r\le1$, while
$\log(1+r^2)\ge\log2>1/2$ when $r\ge1$.  Averaging over $W$, summing
over $i$, and using \cref{eq:first-area-derivative} now yields
\[
  \Phi_x'(t)
  \ge
  \frac{1}{4\pi}
  \mathbb E_Z
  \varphi_t\bigl(x,\widehat x^{\mathrm{loc}}(x+tZ)\bigr).
\]
\end{proof}

\subsection{The Bernoulli width upper bounds the cMMSE integral}
The preceding bounds hold for every fixed planted point $x\in T$.
Now let $X\sim\mu$, where $\mu\in\cP(T)$, and regard $\mu$ as the
prior in the Cauchy channel.

For completeness, the function
$t\mapsto\mathrm{cMMSE}_\mu(t)$ is Borel measurable on $(0,\infty)$.
Indeed, let
\[
  J_T:=
  \prod_{i=1}^n
  \left[\min_{x\in T}x_i,\max_{x\in T}x_i\right].
\]
Coordinatewise projection of an action onto $J_T$ cannot increase
$\varphi_t(x,\cdot)$ for any $x\in T$, so the conditional Bayes-action
optimization may be restricted to the compact set $J_T$.  Since $T$
is finite, the Cauchy posterior weights are Borel functions of
$(t,y)$, and the conditional risk is continuous in the action.  The
measurable minimum theorem therefore shows that the conditional Bayes
risk, and hence $\mathrm{cMMSE}_\mu(t)$, is Borel measurable.

Optimizing over all estimators instead of using the
leave-one-coordinate-out estimator in \cref{lem:cauchy-pivotal} yields
\cref{eq:first-area-main}.
\begin{proposition}[cMMSE-area inequality]
\label{prop:first-area}
For every $\mu\in\cP(T)$,
\begin{equation}
  b(T)
  \ge
  \frac{1}{4\pi}
  \int_0^\infty \mathrm{cMMSE}_\mu(t)\,dt.
  \label{eq:first-area-proposition}
\end{equation}
\end{proposition}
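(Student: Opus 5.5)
The plan is to average \cref{lem:cauchy-pivotal} over the planted point $x\sim\mu$ and then compare the leave-one-coordinate-out estimator with the Bayes-optimal one. First, for each fixed $x\in T$, integrate \cref{eq:cauchy-pivotal-bound} over $t\in(0,\infty)$. Using \cref{lem:first-area-endpoints}, this gives
\[
  b(T)=\int_0^\infty\Phi_x'(t)\,dt
  \ge
  \frac{1}{4\pi}\int_0^\infty
  \mathbb E_Z\varphi_t\bigl(x,\widehat x^{\mathrm{loc}}(x+tZ)\bigr)\,dt .
\]
The right-hand integrand is jointly measurable in $(t,z)$ and nonnegative: $\widehat x^{\mathrm{loc}}$ is Borel and $\varphi_t$ is continuous in $t$. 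Hence Tonelli's theorem applies, and the integral is well defined, possibly $+\infty$, though it is bounded by $4\pi b(T)$.

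Next, the left-hand side $b(T)$ does not depend on $x$. So I will multiply by $\mu(x)$ and sum over the finite set $T$. Since $X\sim\mu$ is independent of $Z$, Tonelli's theorem lets me interchange the finite sum with the $t$-integral. This gives
\[
  b(T)\ge\frac{1}{4\pi}\int_0^\infty
  \mathbb E\,\varphi_t\bigl(X,\widehat x^{\mathrm{loc}}(Y_t)\bigr)\,dt,
  \qquad Y_t=X+tZ .
\]
The crucial point is that $\widehat x^{\mathrm{loc}}$ is a fixed measurable map $\mathbb R^n\to\mathbb R^n$. It is defined through $K=\conv(T)$ alone and does not depend on $x$ or on $\mu$, so for each $t$ it is an admissible estimator in the definition \cref{eq:cauchy-cmmse}. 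Therefore
\[
  \mathbb E\,\varphi_t\bigl(X,\widehat x^{\mathrm{loc}}(Y_t)\bigr)\ge\mathrm{cMMSE}_\mu(t)
\]
for every $t>0$. Integrating this bound, which is legitimate by the measurability of $t\mapsto\mathrm{cMMSE}_\mu(t)$ established just above, yields \cref{eq:first-area-proposition}.

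The main obstacle is only bookkeeping. One point is that the exceptional null set of $t$ in \cref{lem:cauchy-pivotal} may depend on $x$. This is harmless: $T$ is finite, so the union of these null sets over $x\in T$ is still null. The other point is justifying the interchanges, and these follow from nonnegativity alone, with no integrability of the Cauchy noise required.
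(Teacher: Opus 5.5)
Your proposal is correct and follows essentially the paper's argument: average the pivotal bound of \cref{lem:cauchy-pivotal} over $X\sim\mu$, observe that $\widehat x^{\mathrm{loc}}$ is a feasible estimator in \eqref{eq:cauchy-cmmse}, and integrate using \cref{lem:first-area-endpoints} and Tonelli's theorem. The only difference is the order of operations. You integrate in $t$ for each fixed $x$ before averaging, which makes your union-of-null-sets remark unnecessary. The paper instead averages pointwise in $t$ off the null set $\bigcup_{x\in T}N_x$ and then integrates.
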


\begin{proof}
For each $x\in T$, \cref{lem:cauchy-pivotal} holds outside a Lebesgue
null set $N_x\subset(0,\infty)$.  Since $T$ is finite,
\[
  N:=\bigcup_{x\in T}N_x
\]
is again a null set.  Thus, for every $t\notin N$, we may average the
pivotal inequality simultaneously over $X\sim\mu$ to obtain
\[
  \mathbb E_X\Phi_X'(t)
  \ge
  \frac{1}{4\pi}
  \mathbb E\varphi_t\bigl(X,\widehat x^{\mathrm{loc}}(Y_t)\bigr)
  \ge
  \frac{1}{4\pi}\mathrm{cMMSE}_\mu(t).
\]
The second inequality holds because $\widehat x^{\mathrm{loc}}$ is a
Borel measurable estimator and is therefore feasible in
\cref{eq:cauchy-cmmse}.

Finally, $\Phi_x'(t)\ge0$ almost everywhere.  Tonelli's theorem and
\cref{lem:first-area-endpoints} give
\begin{align*}
  \int_0^\infty\mathbb E_X\Phi_X'(t)\,dt
  &=
  \mathbb E_X\int_0^\infty\Phi_X'(t)\,dt \\
  &=
  \mathbb E_X b(T)
  =
  b(T).
\end{align*}
Consequently,
\[
  b(T)
  \ge
  \frac{1}{4\pi}
  \int_0^\infty\mathrm{cMMSE}_\mu(t)\,dt.
\]
\end{proof}

\section{From the Cauchy cMMSE to rate--distortion}
\label{sec:rate-distortion}

The goal of this section is to prove \eqref{eq:rd-main}, the second
comparison in \cref{thm:cauchy-areas}.  Together with
\cref{sec:first-area}, this completes the proof of that theorem.  The
key input is a Cauchy information--estimation inequality, proved in
Appendix~\ref{sec:i-MMSE}, that plays the role of the I--MMSE identity
for the Gaussian additive channel.  A posterior-replica coupling then
converts this information bound into the required rate--distortion
estimate.

For $t\ge0$, consider the Cauchy observation $Y_t=X+tZ$, where
$X\sim\mu$ is independent of $Z$, and let
\begin{equation}
  I_\mu(t):=I(X;Y_t).
\end{equation}

\begin{proposition}\label{prop:multii_I-MMSE_0}
For every $\mu\in\cP(T)$ and $t>0$,
\begin{align}
  I_\mu(t) \le
16\int_t^\infty\frac{\mathrm{cMMSE}_\mu(s)}{s}\,ds.
\end{align}
\end{proposition}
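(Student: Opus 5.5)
The plan is to write the mutual information as an integral of its rate of decay in the noise scale and to bound that rate pointwise in $s$ by $16\,\mathrm{cMMSE}_\mu(s)/s$. Since $T$ is finite and $X\sim\mu$, the observation $Y_s/s=X/s+Z$ converges in law to $Z$ independently of $X$, so $I_\mu(s)\to0$ as $s\to\infty$. Hence
\[
  I_\mu(t)=\lim_{s'\to\infty}\bigl(I_\mu(t)-I_\mu(s')\bigr).
\]
It therefore suffices to prove, for almost every $s>0$, the differential inequality
\[
  \limsup_{\delta\downarrow0}\frac{I_\mu(s)-I_\mu(s+\delta)}{\delta}
  \le \frac{16\,\mathrm{cMMSE}_\mu(s)}{s}.
\]
We also need enough regularity of $s\mapsto I_\mu(s)$ (monotone, locally Lipschitz on $(0,\infty)$) to integrate this inequality.

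The first step uses the stability of the Cauchy law. The sum $tZ+\delta Z'$ of scaled independent standard Cauchy vectors has the law of $(t+\delta)Z$. So we may realize $Y_{s+\delta}=Y_s+\delta Z'$ with $Z'$ independent of $(X,Y_s)$. This makes $X\to Y_s\to Y_{s+\delta}$ a Markov chain, and the chain rule gives
\[
  I_\mu(s)-I_\mu(s+\delta)=I(X;Y_s\mid Y_{s+\delta}).
\]
The second step bounds this conditional mutual information variationally. For any measurable estimator $\widehat x$, we have
\[
  I(X;Y_s\mid Y_{s+\delta})\le \mathbb E\,\KL\bigl(P_{Y_s\mid X,Y_{s+\delta}}\,\big\|\,Q_{Y_s\mid Y_{s+\delta}}\bigr)
\]
for any reference kernel $Q$. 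I would take $Q$ to be the bridge law $P_{Y_s\mid X=\widehat x(Y_{s+\delta}),\,Y_{s+\delta}}$, with $\widehat x$ chosen as a near-optimal estimator for $\mathrm{cMMSE}_\mu(s)$ (possibly evaluated at $Y_{s+\delta}$, with an approximation argument as $\delta\to0$). Since $Z$ has independent coordinates, the bridge law factorizes over coordinates. This reduces the problem to a one-dimensional computation: the KL divergence between two Cauchy bridges from scale $s$ to scale $s+\delta$ that differ only in their starting location, at relative separation $r=|x_i-\widehat x_i|/s$.

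The third step, which I expect to be the main obstacle, is to show that this one-dimensional bridge divergence equals $\delta\cdot f(r)/s+o(\delta)$, uniformly enough to pass the limit inside the expectation, with
\[
  f(r)\le 16\,(1\wedge r^2).
\]
The quadratic behaviour for small $r$ should follow from a Fisher-information expansion of the Poisson kernel. The crucial point is the cap for large $r$: a naive KL between Cauchy location shifts grows like $\log(1+r^2)$, which is not bounded by $1\wedge r^2$. Boundedness must come from the infinitesimal increment. Over a step $\delta$, the Cauchy bridge moves the point only with probability of order $\delta/s$ at distances of order $s$, so the divergence rate should stay $O(\delta/s)$ regardless of $r$.

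To prove this I would compute the bridge density explicitly, using the harmonicity of the Poisson kernel $p_s(y)=s/\bigl(\pi(s^2+y^2)\bigr)$ in $(y,s)$. I would differentiate $\KL$ at $\delta=0$ via a de Bruijn-type identity for the Cauchy semigroup, whose generator is $-\sqrt{-\Delta}$, and then bound the resulting integral by an explicit calculus estimate. Alternatively, one could avoid bridges and apply the scale derivative $\tfrac{d}{ds}I_\mu(s)=\tfrac{d}{ds}\bigl[h(Y_s)-h(sZ)\bigr]$ directly with the $-\sqrt{-\Delta}$ generator, then compare the resulting quantity with the risk of $\widehat x$. Either route ends the same way: integrate $s$ from $t$ to $\infty$, and make the constant $16$ explicit in the one-dimensional lemma.
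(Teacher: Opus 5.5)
Your first two steps are sound. Cauchy stability gives $I_\mu(s)-I_\mu(s+\delta)=I(X;Y_s\mid Y_{s+\delta})$. The variational bound with a reference kernel measurable in $Y_{s+\delta}$ is valid, and given $X$ the bridge factorizes over coordinates.

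\textbf{The gap is your third step.} The one-dimensional bridge divergence is not a function of $r=|x_i-\widehat x_i|/s$ alone; it also depends on the endpoint $y_i$. Since $\widehat x$ must be a function of $Y_{s+\delta}$, you cannot average over $y_i$ with $\widehat x_i$ held fixed. Pointwise in $y$, the cap you want is false. Take $s=1$, $x=0$, $\widehat x=r\gg1$, $|y|\gg r$.
\begin{itemize}
\item Under the bridge from $x$, with probability $\approx\delta$ the large jump to $y$ occurred during $[1,1+\delta]$, and then $Y_1$ is spread like $c_1(\cdot)$ around $0$.
\item The bridge from $\widehat x$ gives that region a density smaller by a factor $\approx r^2/(1+w^2)$.
\end{itemize}
So the divergence is $\delta\,(2\log r-O(1))+o(\delta)$, not $O(\delta/s)$. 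Your heuristic counts the probability $\delta/s$ of the jump but ignores its log-likelihood ratio.

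This is not a technicality, because far observations are exactly where good estimators err. Take $\mu=\frac12(\delta_0+\delta_a)$ with $a\gg s$. On $\{|Y_s|\ge 10a\}$, which has probability $\asymp s/a$, the posterior is nearly uniform. Whatever $\widehat x(Y_s)$ is, one atom lies at distance $\ge a/2$ from it, and for that atom the bridge rate is $\gtrsim\log(a/s)/s$. Your bound on $-I_\mu'(s)$ is therefore $\gtrsim\log(a/s)/a$, whereas $\mathrm{cMMSE}_\mu(s)/s\lesssim 1/a$. Hence $-I_\mu'(s)\le 16\,\mathrm{cMMSE}_\mu(s)/s$ cannot come out of this route with any universal constant. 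Nothing in your argument shows that inequality holds at all.

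The paper never needs a pointwise first-derivative bound.
\begin{itemize}
\item It uses harmonicity of the Poisson kernel to write $F''(t)$ as the posterior variance of the space--time score $t\nabla_{(y,t)}\log c_t(y-U)$. That score lies on the unit circle, and chords satisfy $\|V(v)-V(w)\|\le 2(1\wedge|v-w|)$. So $F''(t)\le 2t^{-2}\,\E\bigl[1\wedge|U^{(1)}-U^{(2)}|^2/t^2\bigr]$, with the cap built in.
\item Integrating twice, using convexity and $F(\infty)=0$, yields the kernel $(s-t)/s^2\le 1/s$.
\item The intermediate bound $-I_\mu'(t)\le 4\int_t^\infty \mathrm{rcMMSE}_\mu(s)s^{-2}\,ds$ is itself of order $\log(a/t)/a$ in the two-point example. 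The logarithm disappears only after the second integration, which is the step your approach would also need.
\item The product case goes through the coordinatewise chain rule with side information plus the kernel-surrogate data-processing lemma (factor $2$). \cref{lem:replica-center_1} then converts $\mathrm{rcMMSE}$ to $\mathrm{cMMSE}$ (factor $4$).
\end{itemize}
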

This is the second assertion of \cref{prop:multii_I-MMSE}, proved in
Appendix~\ref{sec:i-MMSE} by a direct calculation using the harmonicity
of the Cauchy density.

For $t>0$, let $X_t^{(1)},X_t^{(2)}$ be conditionally independent
vector-valued draws from the posterior law $P_{X\mid Y_t}$.  Define
\begin{equation}
\begin{aligned}
  \mathrm{rcMMSE}_{\mu}(t)
  &:=
  \mathbb E\varphi_t\bigl(X_t^{(1)},X_t^{(2)}\bigr)
  =
  \sum_{i=1}^n\mathrm{rcMMSE}_{i,\mu}(t),
  \\
  \mathrm{rcMMSE}_{i,\mu}(t)
  &:=
  \mathbb E\left[
    1\wedge
    \frac{|X_{t,i}^{(1)}-X_{t,i}^{(2)}|^2}{t^2}
  \right].
\end{aligned}
  \label{eq:diagonal-coordinate-risk}
\end{equation}
The following lemma compares this posterior-replica risk pointwise
with the capped Bayes risk.
\begin{lemma}
\label{lem:replica-center_1}
For every $t>0$,
$ \mathrm{rcMMSE}_\mu(t)
  \le 4\mathrm{cMMSE}_\mu(t).$
\end{lemma}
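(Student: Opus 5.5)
The plan is to work coordinatewise and use a quasi-triangle inequality for the capped quadratic cost. For scalars $a,b,c$ and $t>0$, set $\rho(a,b):=1\wedge |a-b|^2/t^2$. Because $|a-b|^2\le 2|a-c|^2+2|c-b|^2$ and $u\mapsto 1\wedge u$ is nondecreasing and subadditive, we get
\[
  \rho(a,b)\le 1\wedge\bigl(2|a-c|^2/t^2+2|c-b|^2/t^2\bigr)\le 2\rho(a,c)+2\rho(c,b).
\]
Summing over coordinates gives $\varphi_t(x,x')\le 2\varphi_t(x,v)+2\varphi_t(v,x')$ for every $x,x',v\in\mathbb R^n$. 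Here $v$ is allowed to lie outside $T$, which matches the unrestricted estimators in \eqref{eq:cauchy-cmmse}.

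Next, fix any measurable estimator $\widehat x:\mathbb R^n\to\mathbb R^n$. Apply the inequality with $x=X_t^{(1)}$, $x'=X_t^{(2)}$ and $v=\widehat x(Y_t)$, then take expectations:
\[
  \mathrm{rcMMSE}_\mu(t)\le 2\,\mathbb E\varphi_t\bigl(X_t^{(1)},\widehat x(Y_t)\bigr)+2\,\mathbb E\varphi_t\bigl(\widehat x(Y_t),X_t^{(2)}\bigr).
\]
By the Nishimori identity, $(X_t^{(j)},Y_t)$ has the same joint law as $(X,Y_t)$ for $j=1,2$. Each term on the right therefore equals $\mathbb E\varphi_t(X,\widehat x(Y_t))$, using also that $\varphi_t$ is symmetric. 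This gives $\mathrm{rcMMSE}_\mu(t)\le 4\,\mathbb E\varphi_t(X,\widehat x(Y_t))$.

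Finally, take the infimum over measurable $\widehat x$ to obtain $\mathrm{rcMMSE}_\mu(t)\le 4\,\mathrm{cMMSE}_\mu(t)$. No attainment of the infimum is needed. The only step needing care is the capped triangle inequality with constant $2$, in particular the subadditivity $1\wedge(u+w)\le (1\wedge u)+(1\wedge w)$. Everything else is measure-theoretically routine: $T$ is finite, so the posterior $P_{X\mid Y_t}$ is an explicit Borel kernel, and $\varphi_t\le n$ is bounded, so all expectations are finite.
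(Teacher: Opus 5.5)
Your proposal is correct and follows essentially the same route as the paper: the capped quasi-triangle inequality $\varphi_t(x,x')\le 2\varphi_t(x,a)+2\varphi_t(x',a)$, then the Nishimori identity, then the infimum over estimators. The only cosmetic difference is that the paper first replaces one replica by $X$ inside a conditional expectation given $Y_t$ and then applies the inequality. You instead apply it directly to the replica pair and use $(X_t^{(j)},Y_t)\stackrel d=(X,Y_t)$ for each term; the two are equivalent.
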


\begin{proof}
Notice that for any real numbers $x,x',a$ we have
\begin{equation}
  1\wedge\frac{|x-x'|^2}{t^2}
  \le
  2\left(1\wedge\frac{|x-a|^2}{t^2}\right)
  +2\left(1\wedge\frac{|x'-a|^2}{t^2}\right).
  \label{eq:cap-quasitriangle-scalar}
\end{equation}
Indeed, $|x-x'|^2\le2|x-a|^2+2|x'-a|^2$, and
$1\wedge(2u+2v)\le2(1\wedge u)+2(1\wedge v)$.
Summing \cref{eq:cap-quasitriangle-scalar} over coordinates gives
\[
  \varphi_t(x,x')
  \le 2\varphi_t(x,a)+2\varphi_t(x',a).
\]
Hence, for every $Y_t$-measurable function $a(Y_t)$, the Nishimori
identity gives
\[
  \E\bigl[\varphi_t(X_t^{(1)},X_t^{(2)})\mid Y_t\bigr]=\E\bigl[\varphi_t(X_t^{(1)},X)\mid Y_t\bigr]
  \le
  4\E\bigl[\varphi_t(X,a(Y_t))\mid Y_t\bigr].
\]
Averaging over $Y_t$ and optimizing over $a$ concludes the proof.
\end{proof}

Equipped with Proposition \ref{prop:multii_I-MMSE_0} and Lemma \ref{lem:replica-center_1} we are now in a position to prove \cref{eq:rd-main}.

\begin{proposition}[Cauchy Bayes-risk area controls rate distortion]
\label{prop:rate-distortion}
For every $\mu\in\cP(T)$,
\begin{equation}
  \int_0^\infty\mathrm{RD}_\mu(t)\,dt
  \le
  20\int_0^\infty\mathrm{cMMSE}_\mu(t)\,dt.
  \label{eq:rate-distortion-proposition}
\end{equation}
\end{proposition}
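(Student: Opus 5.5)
The plan is to produce, for each $t>0$, a feasible coupling in \eqref{eq:rate-distortion} from two posterior replicas, and then integrate in $t$.

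\textbf{Step 1: the replica coupling.} Fix $t>0$ and let $\widehat X:=X_t^{(1)}$ be a posterior replica given $Y_t$. By the Nishimori identity, $\widehat X\sim\mu$, so $(X,\widehat X)$ is a self-coupling admissible in $\mathrm{RD}_\mu(t)$. Since $X\to Y_t\to\widehat X$ is a Markov chain, the data-processing inequality gives $I(X;\widehat X)\le I(X;Y_t)=I_\mu(t)$. The Nishimori identity also gives $\mathbb E\varphi_t(X,\widehat X)=\mathrm{rcMMSE}_\mu(t)$, and \cref{lem:replica-center_1} bounds this by $4\,\mathrm{cMMSE}_\mu(t)$. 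Hence, for every $t>0$,
\[
  \mathrm{RD}_\mu(t)\le I_\mu(t)+4\,\mathrm{cMMSE}_\mu(t).
\]

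\textbf{Step 2: integrating the information term.} By \cref{prop:multii_I-MMSE_0}, $I_\mu(t)\le 16\int_t^\infty \mathrm{cMMSE}_\mu(s)s^{-1}\,ds$. Integrating over $t\in(0,\infty)$ and using Tonelli (all integrands are nonnegative and measurable, as noted before \cref{prop:first-area}) gives
\[
  \int_0^\infty I_\mu(t)\,dt\le 16\int_0^\infty \frac{\mathrm{cMMSE}_\mu(s)}{s}\int_0^s dt\,ds=16\int_0^\infty\mathrm{cMMSE}_\mu(s)\,ds .
\]
Adding the distortion term yields the constant $16+4=20$, which is exactly \eqref{eq:rate-distortion-proposition}.

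\textbf{Step 3: measurability and the main obstacle.} The remaining issue is measurability of $t\mapsto\mathrm{RD}_\mu(t)$, which is needed for the left side to be a Lebesgue integral. Since $T$ is finite, the infimum runs over the compact polytope of couplings on $T\times T$. On that set the objective is continuous in the coupling and monotone in $t$, so $\mathrm{RD}_\mu$ is nonincreasing and hence Borel. Otherwise one can simply use an upper integral. The substantive inputs, \cref{prop:multii_I-MMSE_0} and \cref{lem:replica-center_1}, are already available, so the only delicate points are the Markov structure for data processing and the Fubini exchange, both of which are routine.
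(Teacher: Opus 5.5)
Your proposal is correct and is essentially the paper's proof: the posterior-replica self-coupling handled by data processing and the Nishimori identity, then \cref{prop:multii_I-MMSE_0} integrated via Tonelli. The only difference is that you apply \cref{lem:replica-center_1} pointwise in $t$ rather than after integrating, which gives the same constant $16+4=20$. Your measurability remark is also sound: the paper itself later uses that $\mathrm{RD}_\mu$ is nonincreasing.
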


\begin{proof}
Fix $t>0$ and consider the Cauchy observation
$Y_t=X+tZ$ with prior $X\sim\mu$.  Let $X_t^{(1)}$ be a draw from
$P_{X\mid Y_t}$ sampled conditionally independently of $X$ given
$Y_t$.
By Bayes' rule, $X_t^{(1)}$ has marginal
law $\mu$, and therefore $(X,X_t^{(1)})$ is a valid coupling.  Hence
\begin{equation}
  I(X;X_t^{(1)})\le I(X;Y_t)=I_\mu(t),
  \label{eq:replica-data-processing}
\end{equation}
where the inequality follows from the Markov chain
$X\to Y_t\to X_t^{(1)}$.
Also, by the Nishimori identity,
\begin{equation}
  \E\varphi_t(X,X_t^{(1)})=\mathrm{rcMMSE}_\mu(t),
  \label{eq:replica-distortion}
\end{equation}
Therefore, combining this with
\cref{prop:multii_I-MMSE_0},
\begin{align}
  \mathrm{RD}_\mu(t)
  &\le I_\mu(t)+\mathrm{rcMMSE}_\mu(t) \le
  \mathrm{rcMMSE}_\mu(t)+16\int_t^\infty\frac{\mathrm{cMMSE}_\mu(s)}s\,ds.
  \label{eq:rd-pointwise-tail}
\end{align}
Integrating \cref{eq:rd-pointwise-tail}, applying Tonelli's theorem,
and using \cref{lem:replica-center_1},
\begin{align*}
  \int_0^\infty\mathrm{RD}_\mu(t)\,dt
  &\le
  \int_0^\infty \mathrm{rcMMSE}_\mu(t)\,dt
  +16\int_0^\infty
    \int_t^\infty\frac{\mathrm{cMMSE}_\mu(s)}s\,ds\,dt
  \\
  &\le
  20\int_0^\infty \mathrm{cMMSE}_\mu(s)\,ds.
\end{align*}
This proves \cref{eq:rate-distortion-proposition}.
\end{proof}

\begin{proof}[Proof of \cref{thm:cauchy-areas}]
The first inequality is \cref{prop:first-area}, and the second is
\cref{prop:rate-distortion}.  Combining them gives
\[
  \int_0^\infty\mathrm{RD}_\mu(t)\,dt
  \le20\int_0^\infty\mathrm{cMMSE}_\mu(t)\,dt
  \le80\pi\,b(T).
\]
\end{proof}

\section{From the rate--distortion integral to distributional decomposition}
\label{sec:direct-rd-bernoulli}

The goal of this section is to prove the prescribed-law estimate
$\delta_T(\mu)\lesssim\int_0^\infty\mathrm{RD}_\mu(t)\,dt$ and then
use a minimax argument to derive $\Lambda(T)\lesssim R(T)$, which is
\cref{thm:rd-bernoulli}.  We state the two required lemmas, deduce the
theorem, and then prove the lemmas.  The prescribed-law estimate will also
supply the final comparison in \cref{sec:distributional-completion}.

\subsection{Two key lemmas}
We use the Gaussian width $W$ from \eqref{eq:gaussian-width}.  The
distributional functionals $G$ and $\delta_T$ were defined previously.

The first lemma bounds $\delta_T(\mu)$ by the rate--distortion
integral for every $\mu\in\cP(T)$.
\begin{lemma}
\label{lem:distributional-decomposition_0}
For every $\mu\in\cP(T)$,
\begin{equation}
  \delta_T(\mu)
  \le30\int_0^\infty\mathrm{RD}_\mu(t)\,dt.
  \label{eq:distributional-decomposition-bound}
\end{equation}
\end{lemma}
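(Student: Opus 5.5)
The plan is to build a multiscale chain of approximations of $X\sim\mu$ from near-optimal couplings in $\mathrm{RD}_\mu(t)$ at dyadic scales. Each chain increment is then clipped coordinatewise at its own scale: the clipped part will be controlled by a fixed-law Gaussian chaining bound, and the overshoot will be absorbed into the $\ell_1$ part $a$.

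\textbf{Step 1: the chain.} Fix $t_k:=2^{-k}\diam_\infty(T)$ for $k\ge0$. At $t_0$ we have $\varphi_{t_0}\equiv$ quadratic, and for $t\ge t_0$ the trivial coupling $\widehat X$ independent of $X$ costs at most $\sum_i(\diam_\infty/t)^2$. For each $k$, pick a coupling $(X,\widehat X_k)$ with both marginals $\mu$ that nearly attains $\mathrm{RD}_\mu(t_k)$. Glue these along $X$ into a Markov structure $\widehat X_k\leftarrow X\to\widehat X_{k+1}$, so all of them are conditionally independent given $X$. Since $\mathrm{RD}_\mu$ is nonincreasing in $t$, $\sum_k t_k\mathrm{RD}_\mu(t_k)\le 2\int_0^\infty\mathrm{RD}_\mu(t)\,dt$.

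\textbf{Step 2: splitting the increments.} Write
\[
X-\widehat X_0=\sum_{k\ge0}(\widehat X_{k+1}-\widehat X_k),
\]
which is justified because $\widehat X_k\to X$ as $t_k\to0$: for a finite $T$, once $t_k$ is below the minimal nonzero coordinate gap, $\mathbb E\varphi_{t_k}(X,\widehat X_k)$ counts disagreements. Let $\mathrm{clip}_t(s):=\sgn(s)(|s|\wedge t)$ act coordinatewise. Split each increment $\Delta_k:=\widehat X_{k+1}-\widehat X_k$ into $\mathrm{clip}_{t_k}(\Delta_k)$ and an overshoot $\Delta_k-\mathrm{clip}_{t_k}(\Delta_k)$.

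Define $a(X)$ as the conditional expectation given $X$ of the summed overshoots, plus the correction $\widehat X_0-\mathbb E[\widehat X_0\mid X]$, which is harmless at the top scale. Since $G$ is defined by a supremum over couplings, it is convex under mixtures and translation-invariant by $\mathbb E\langle c,G_0\rangle=0$. Jensen's inequality then lets the deterministic $a$ replace the randomized decomposition in both the $\ell_1$ term and the $G$ term.

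\textbf{Step 3: the Gaussian part.} The key fixed-law estimate is the Donsker--Varadhan bound: for any coupling of a random vector $V$ with $G_0\sim N(0,I_n)$,
\[
\mathbb E\langle V,G_0\rangle\le\sqrt{2\,I(V;G_0)\,\mathbb E\|V\|_2^2}.
\]
Applying this to each clipped increment, conditioning on the chain, gives
\[
\mathbb E\langle V,G_0\rangle\lesssim\sum_k\sqrt{(I(X;\widehat X_k)+I(X;\widehat X_{k+1})+1)\,\mathbb E\|\mathrm{clip}_{t_k}\Delta_k\|_2^2}.
\]
This uses the chain rule to bound the information that $G_0$ can extract from the increment, and the conditional independence of the $\widehat X_k$ given $X$. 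Next, $\mathbb E\|\mathrm{clip}_{t_k}\Delta_k\|_2^2\le t_k^2\,\mathbb E\varphi_{t_k}(\widehat X_{k+1},\widehat X_k)$, which the quasi-triangle inequality \cref{eq:cap-quasitriangle-scalar} bounds by $4t_k^2\bigl(\mathbb E\varphi_{t_k}(X,\widehat X_k)+\mathbb E\varphi_{t_{k+1}}(X,\widehat X_{k+1})\bigr)$. By AM--GM each term is then at most $\lesssim t_k(\mathrm{RD}_\mu(t_k)+\mathrm{RD}_\mu(t_{k+1}))$, which sums to $\lesssim\int\mathrm{RD}_\mu$.

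\textbf{Step 4: the $\ell_1$ part.} The overshoot in coordinate $i$ at scale $k$ is $(|\Delta_{k,i}|-t_k)_+$. Summing it naively over $k$ is not obviously controlled by $\sum_kt_k\,\mathbb P(|\Delta_{k,i}|>t_k)$, because a single large jump can be paid repeatedly. I would instead reorganize the telescoping per coordinate. Take the last scale $k$ at which $|X_i-\widehat X_{k,i}|>t_k$ and route the whole discrepancy of that coordinate through $a$, using
\[
|X_i-\widehat X_{k,i}|\le\sum_{j\le k}t_j\,\mathbf 1\{|X_i-\widehat X_{j,i}|>t_j\}\cdot O(1),
\]
which holds once consecutive approximations are within the clip. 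Comparing with $\int_0^\infty(1\wedge|X_i-\widehat X_i|^2/t^2)\,dt$-type sums then gives $\mathbb E\|a(X)\|_1\lesssim\sum_kt_k\,\mathbb E\varphi_{t_k}(X,\widehat X_k)\lesssim\int\mathrm{RD}_\mu$.

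I expect Step 4, together with the compatibility of this ``last bad scale'' stopping rule with the Markov structure used in Step 3, to be the main obstacle. Stopping makes the increments depend on $X$ across scales, so the information bookkeeping in Step 3 must be redone for the stopped increments. I would try first to define the stopped chain coordinatewise as a deterministic function of $(X,\widehat X_k,\widehat X_{k+1})$ at each step, so that the information bound of Step 3 is unaffected.
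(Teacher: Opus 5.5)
Your Steps 1--3 follow the paper's architecture: conditionally independent rate--distortion couplings glued along $X$, coordinatewise clipping of the chain increments, and the bound $\E\langle\Delta_k,G_0\rangle\le(\E\|\Delta_k\|_2^2)^{1/2}\,(2I(X;\widehat X_k,\widehat X_{k+1}))^{1/2}$. The gap is Step 4, which decides the lemma and does not close as written. Your displayed inequality $|X_i-\widehat X_{k,i}|\le O(1)\sum_{j\le k}t_j\mathbf 1\{|X_i-\widehat X_{j,i}|>t_j\}$ is false. Suppose scales $0,\dots,k-1$ are good for coordinate $i$, and scale $k$ is bad with $|X_i-\widehat X_{k,i}|$ of order $\diam_\infty(T)$. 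Then the left side is of order $t_0$, while the right side is $O(t_k)=O(2^{-k}t_0)$. Your qualifier ``once consecutive approximations are within the clip'' excludes exactly the bad scales. So routing the discrepancy at the last bad scale through $a$ cannot be paid for by $\sum_k t_k\E\varphi_{t_k}(X,\widehat X_k)$.

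The missing idea is to keep the signs, so that the overshoots cancel within runs; no stopping rule is needed.
\begin{itemize}
\item Clip at level $t_k+t_{k+1}=\tfrac32 t_k$, so that a step between two good scales is never clipped.
\item Set $\widetilde V:=\widehat X_{\mathrm{top}}+\sum_k\mathrm{clip}_{3t_k/2}(\widehat X_{k+1}-\widehat X_k)$, and define $a(x):=x-\E[\widetilde V\mid X=x]$ as a residual, not as a sum of overshoots.
\item Then $X_j-\widetilde V_j$ is the \emph{signed} sum of overshoots, and these vanish unless $k$ or $k+1$ is bad.
\item Take a maximal bad run $p,\dots,q$, bracketed by good scales $p-1$ and $q+1$. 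The unclipped increments telescope to $\widehat X_{q+1,j}-\widehat X_{p-1,j}$, of size at most $t_{p-1}+t_{q+1}$. The clipped increments total at most $\tfrac32\sum_{k\ge p-1}t_k$.
\item So each run costs $O(t_p)$, charged to its \emph{first} (coarsest) bad scale. This gives the pathwise bound $\|X-\widetilde V\|_1\lesssim\sum_k t_k\varphi_{t_k}(X,\widehat X_k)$, and Jensen finishes.
\end{itemize}
Each clipped increment is a fixed function of $(\widehat X_k,\widehat X_{k+1})$ and nothing is stopped. Hence your Step 3 information bookkeeping is untouched, and the compatibility obstacle you anticipate never arises.

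Three further points need repair.

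(i) The claim $\widehat X_k\to X$ is false. Once $t$ is below the minimal nonzero coordinate gap, $\mathrm{RD}_\mu(t)$ is a fixed Hamming rate--distortion problem. Its near-optimal couplings keep $\mathbb P(\widehat X\neq X)$ bounded away from $0$. For example, if $\mu$ is uniform on two points differing in one coordinate, the optimal disagreement probability is $1/(1+e)$. So $\widehat X_k$ need not converge to $X$, and your series of overshoots need not converge. Stop the chain at a finite $k_1$ with $\widehat X_{k_1}:=X$, which costs $t_{k_1}H(\mu)$ in the Gaussian term, and then let $k_1\to\infty$.

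(ii) The top term is not harmless as asserted. A rate--distortion-optimal $\widehat X_0$ at $t_0=\diam_\infty(T)$ leaves $\E\langle\widehat X_0,G_0\rangle$ in the Gaussian part. The natural bound for it, $\bigl(I(X;\widehat X_0)\,\E\|X-X'\|_2^2\bigr)^{1/2}$, is not $O(t_0\mathrm{RD}_\mu(t_0))$ without further argument. Start instead from an independent copy at a scale $r\ge\diam_\infty(T)$ with $r\to\infty$. It is good in every coordinate, carries zero information, and its cost $r\,\E\varphi_r(X,X')\le\E\|X-X'\|_2^2/r$ vanishes.

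(iii) Drop the ``$+1$'' in Step 3. Data processing and conditional independence give $I(G_0;\widehat X_k,\widehat X_{k+1})\le I(X;\widehat X_k)+I(X;\widehat X_{k+1})$. With the $+1$, the AM--GM step leaves terms $t_k\sqrt{d_k}$, which are not $O(t_kd_k)$.
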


The second, a minimax lemma, places
$\sup_{\mu\in\cP(T)}\delta_T(\mu)$ in a form directly comparable with
the Bernoulli functional $\Lambda(T)$.
\begin{lemma}
\label{lem:minimax-decomposition_0}
For every finite $T\subset\R^n$,
\[
  \inf_{a:T\to\R^n}
  \left\{
    \max_{x\in T}\norm{a(x)}_1
    +W\bigl(\{x-a(x):x\in T\}\bigr)
  \right\}
  \le
  2\sup_{\mu\in\cP(T)}\delta_T(\mu).
\]
\end{lemma}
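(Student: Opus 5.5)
The plan is to derive the inequality from Sion's minimax theorem applied to the payoff
\[
  L(a,\nu):=\mathbb E_{X\sim\nu}\|a(X)\|_1+G\bigl((\operatorname{id}_T-a)_{\#}\nu\bigr),
  \qquad a\in(\R^n)^T,\ \nu\in\cP(T),
\]
so that $\delta_T(\nu)=\inf_a L(a,\nu)$. The factor $2$ will come from merging two separate suprema over $\nu$ into a single one.

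\textbf{Step 1: reduce to $\inf_a\sup_\nu L$.} Fix $a$ and write $S_a:=\{x-a(x):x\in T\}$. Since $\sup_{y\in S_a}\langle G_0,y\rangle$ is attained at a measurable selection, $W(S_a)=\sup_{\rho\in\cP(S_a)}G(\rho)$. Every $\rho\in\cP(S_a)$ is of the form $(\operatorname{id}_T-a)_{\#}\nu$ for some $\nu\in\cP(T)$ (choose one preimage per point). Hence $W(S_a)=\sup_{\nu}G((\operatorname{id}_T-a)_{\#}\nu)$. Similarly, $\max_x\|a(x)\|_1=\sup_\nu\mathbb E_\nu\|a(X)\|_1$. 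Both functionals are nonnegative; for $G$ this follows from the independent coupling. Using $f(\nu_1)+g(\nu_2)\le 2\max\{f(\nu_1),g(\nu_2)\}\le 2\sup_\nu(f+g)(\nu)$ for nonnegative $f,g$, we get
\[
  \max_x\|a(x)\|_1+W(S_a)\le 2\sup_\nu L(a,\nu).
\]
It therefore suffices to show $\inf_a\sup_\nu L(a,\nu)=\sup_\nu\inf_a L(a,\nu)$.

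\textbf{Step 2: convexity and concavity.} For convexity in $a$, note that for each fixed coupling of $X\sim\nu$ with $G_0$, the quantity $\mathbb E\langle X-a(X),G_0\rangle$ is affine in $a$. So $G((\operatorname{id}_T-a)_{\#}\nu)$ is a supremum of affine functions and hence convex. The $\ell_1$ term is convex as well, so $L(\cdot,\nu)$ is convex. For concavity in $\nu$, the first term is linear in $\nu$. For the second term, take couplings that are near-optimal for $\nu_1$ and for $\nu_2$, and mix them with weights $\lambda,1-\lambda$ through an independent selector. The resulting first marginal is $(\operatorname{id}_T-a)_{\#}(\lambda\nu_1+(1-\lambda)\nu_2)$, the Gaussian marginal is still $N(0,I_n)$, and the objective is the corresponding mixture. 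Therefore $G$ of the pushforward is concave in $\nu$.

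\textbf{Step 3: semicontinuity and Sion.} $\cP(T)$ is a finite-dimensional simplex, hence compact and convex, and the domain of $a$ is the convex set $(\R^n)^T$. Being convex and finite on a finite-dimensional space, $L(\cdot,\nu)$ is continuous. The step I expect to need genuine care is upper semicontinuity of $\nu\mapsto L(a,\nu)$, because a concave function on a simplex need not be usc at the boundary. I plan to check continuity directly. A coupling of a finitely supported law with weights $w_x$ amounts to choosing measurable $f_x\ge0$ on Gaussian space with $\sum_x f_x=1$ and $\mathbb E f_x(G_0)=w_x$. Given $\nu,\nu'$, rescale or transfer mass between the $f_x$ to move from weights $w$ to $w'$. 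The objective then changes by at most
\[
  \max_x\|x-a(x)\|_2\cdot\sup_{P(A)\le\|w-w'\|_1}\mathbb E\bigl[\|G_0\|_2\mathbf 1_A\bigr],
\]
which tends to $0$ as $\|w-w'\|_1\to0$ by uniform integrability of $\|G_0\|_2$. Sion's theorem then gives the equality of Step 1. Combined with Step 1, this yields the stated bound with the factor $2$.
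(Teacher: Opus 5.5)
Your proposal is correct and matches the paper's proof. Both use the same payoff $J(a,\mu)$, convexity in $a$ as a supremum of affine maps, concavity in $\mu$ by mixing couplings, and Sion's theorem on the compact simplex $\cP(T)$. In both, the factor $2$ comes from bounding $\max_{x}\|a(x)\|_1$ and $W(\{x-a(x):x\in T\})$ separately by $\sup_\mu J(a,\mu)$; you do this before invoking Sion, the paper after. The only real difference is the upper semicontinuity in $\mu$. You reweight the conditional weights $f_x(G_0)$ and use uniform integrability of $\|G_0\|_2$, which gives genuine continuity with an explicit modulus. The paper instead passes to weak limits of tight near-optimal couplings. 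Both arguments work.
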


\subsection{Proof of the rate--distortion lower bound}

\begin{proof}[Proof of \cref{thm:rd-bernoulli}]
By \cref{lem:distributional-decomposition_0,lem:minimax-decomposition_0},
\[
  \inf_{a:T\to\R^n}
  \left\{
    \max_{x\in T}\norm{a(x)}_1
    +W\bigl(\{x-a(x):x\in T\}\bigr)
  \right\}
  \le
  60\sup_{\mu\in\cP(T)}
  \int_0^\infty\mathrm{RD}_\mu(t)\,dt
  =
  60R(T).
\]
Fix $\varepsilon>0$, and choose $a:T\to\R^n$ such that, with
\[
  A:=a(T),
  \qquad
  U:=\{x-a(x):x\in T\},
  \qquad
  M:=\max_{x\in T}\norm{a(x)}_1,
\]
we have $M+W(U)\le60R(T)+\varepsilon$.  Since $T\subseteq A+U$ and
$\sup_{z\in A}\|z\|_1=M$, the Gaussian majorizing-measure theorem
\cite{Talagrand1987} gives a universal constant $C_{\rm G}>0$ such that
\[
  \gamma_{2}(U,\norm{\cdot}_2)\le C_{\rm G}W(U).
\]
Consequently, \eqref{eq:familiar-bernoulli-functional} yields, with
$C_0:=\max\{1,C_{\rm G}\}$,
\[
  \Lambda(T)
  \le
  M+\gamma_2(U,\|\cdot\|_2)
  \le
  M+C_{\rm G}W(U)
  \le
  C_0\bigl(60R(T)+\varepsilon\bigr).
\]
Letting $\varepsilon\downarrow0$ proves
$R(T)\ge(60C_0)^{-1}\Lambda(T)$, and hence
\eqref{eq:rd-bernoulli}.
\end{proof}

\subsection{The distributional decomposition lemma}

We now prove \cref{lem:distributional-decomposition_0}, which supplies
the last comparison in the prescribed-law cycle.  We first prove a
discrete multiscale bound and then compare the resulting dyadic sum
with the rate--distortion integral.  For $k\in\mathbb Z$, set
\[
  r_k:=2^{-k}.
\]

The following construction is based on the same general
decomposition-from-chaining principle as
\cite[Proposition~4.3]{Talagrand1994} and
\cite[Theorem~3.1]{BednorzLatala2014}: a multiscale, chaining-type
description is converted into an $\ell_1$-plus-Gaussian decomposition.
Here the multiscale data are supplied by rate--distortion
self-couplings.

\begin{lemma}
\label{lem:clipped-rounding}
Let $\mu\in\cP(T)$, let $X\sim\mu$, and let $k_0<k_1$ be integers
such that
\[
  r_{k_0}\ge2\diam_\infty(T).
\]
Suppose that $U_{k_0},\ldots,U_{k_1}$ are conditionally independent
given $X$ and satisfy $U_k\sim\mu$ for every $k=k_0,\ldots,k_1$.
Assume in addition that $U_{k_0}$ is independent of $X$ and that
$U_{k_1}=X$.  For $k=k_0,\ldots,k_1$, set
\[
  d_k:=\E\varphi_{r_k}(X,U_k),
  \qquad
  q_k:=I(X;U_k).
\]
Then
\begin{equation}
  \delta_T(\mu)
  \le
  \frac{17}{2}\sum_{k=k_0+1}^{k_1-1}r_kd_k
  +\frac{9}{2\sqrt2}\sum_{k=k_0}^{k_1}r_k(d_k+q_k).
  \label{eq:clipped-rounding-conclusion}
\end{equation}
\end{lemma}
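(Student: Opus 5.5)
The plan is to build the decomposition map $a$ by chaining along the given self-couplings and to pass to deterministic functions of $X$ by taking conditional expectations.

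\textbf{Splitting the telescoping sum.} Write
\[
  X-U_{k_0}=\sum_{k=k_0}^{k_1-1}(U_{k+1}-U_k).
\]
Each increment is split coordinatewise at scale $r_k$: define
\[
  \Delta_k:=U_{k+1}-U_k=h_k+\ell_k,
  \qquad
  h_{k,i}:=\mathrm{clip}_{[-r_k,r_k]}(\Delta_{k,i}).
\]
Here $h_k$ is the bounded ``Gaussian'' part and $\ell_k$ is the large-jump ``$\ell_1$'' part. We then set
\[
  a(x):=\E\Bigl[\sum_k\ell_k\,\Big|\,X=x\Bigr],
\]
so that
\[
  x-a(x)=\E U_{k_0}+\sum_k\E[h_k\mid X=x],
\]
using that $U_{k_0}$ is independent of $X$ and $U_{k_1}=X$. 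The constant shift $\E U_{k_0}$ does not change $G$, since $\E G_0=0$.

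\textbf{Bounding the $\ell_1$ part.} By Jensen, $\E\|a(X)\|_1\le\sum_k\E\|\ell_k\|_1$. Note $|\ell_{k,i}|\le|\Delta_{k,i}|\mathbbm 1\{|\Delta_{k,i}|>r_k\}$. If $|\Delta_{k,i}|>r_k$, then $|X_i-U_{k,i}|>r_k/2$ or $|X_i-U_{k+1,i}|>r_k/2=r_{k+1}$. I expect to bound the excess of the jump by the capped distortions of the surrounding scales, using that an error larger than $r_j$ is charged $r_j\cdot 1$ in $r_j\varphi_{r_j}$ at every scale $j$ below it. Summing the geometric series $\sum_{j}r_j$ then controls $|\Delta_{k,i}|\mathbbm 1\{\cdot\}$ by a constant times $\sum_j r_j\mathbbm 1\{|X_i-U_{j,i}|>r_j\}$. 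This should produce the $\frac{17}{2}\sum r_kd_k$ term.

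The endpoints need separate care. At $k_0$, $\diam_\infty(T)\le r_{k_0}/2$, so no coordinate of $\Delta_{k_0}$ is clipped and there is no large jump. At $k_1$, $d_{k_1}=0$ because $U_{k_1}=X$. This explains why the first sum runs only over interior indices. I expect this bookkeeping, namely choosing the clip thresholds and the charging scheme so that each $d_j$ is counted a bounded number of times, to be the main obstacle.

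\textbf{Bounding the Gaussian part.} Fix any coupling of $X$ with $G_0\sim N(0,I_n)$, and extend it so that $G_0\to X\to(U_k)_k$ is Markov. Then
\[
  \E\langle \E[h_k\mid X],G_0\rangle=\E\langle h_k,G_0\rangle .
\]
By data processing and conditional independence of the $U$'s given $X$,
\[
  I(G_0;U_k,U_{k+1})\le I(X;U_k,U_{k+1})\le q_k+q_{k+1}.
\]
The Donsker--Varadhan variational formula against an independent Gaussian, together with the Gaussian moment generating function, gives
\[
  \E\langle h_k,G_0\rangle\le\sqrt{2(q_k+q_{k+1})\,\E\|h_k\|_2^2}.
\]
Next,
\[
  |h_{k,i}|^2\le r_k^2\Bigl(1\wedge\tfrac{|\Delta_{k,i}|^2}{r_k^2}\Bigr)
  \le 2r_k^2\bigl(1\wedge\tfrac{|X_i-U_{k,i}|^2}{r_k^2}\bigr)+2r_k^2\bigl(1\wedge\tfrac{|X_i-U_{k+1,i}|^2}{r_k^2}\bigr),
\]
by the quasi-triangle inequality \eqref{eq:cap-quasitriangle-scalar}. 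Since $\varphi_{r_k}\le4\varphi_{r_{k+1}}$, this gives $\E\|h_k\|_2^2\lesssim r_k^2(d_k+d_{k+1})$.

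Finally, AM--GM in the form $\sqrt{2AB}\le(A+B)/\sqrt2$ turns each term into a constant times $r_k(d_k+d_{k+1}+q_k+q_{k+1})$. Summing over $k$ and reindexing with $r_{k+1}=r_k/2$ gives the $\frac{9}{2\sqrt2}\sum r_k(d_k+q_k)$ term. Taking the supremum over couplings bounds $G\bigl((\mathrm{id}_T-a)_\#\mu\bigr)$, which completes the argument.
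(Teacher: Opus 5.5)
\textbf{There is a genuine gap in the $\ell_1$ half of your proposal.}

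Your Gaussian half is essentially the paper's argument: the Markov extension $G_0\to X\to(U_k)$, data processing down to $q_k+q_{k+1}$, a Gaussian mean--information bound (Donsker--Varadhan applied conditionally on $(U_k,U_{k+1})$), then Cauchy--Schwarz and AM--GM. Your map $a$ is also, up to the clipping threshold, the paper's $A(x)=x-\E[\widetilde V\mid X=x]$.

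The problem is the step $\E\|a(X)\|_1\le\sum_k\E\|\ell_k\|_1$ followed by per-increment charging. This cannot work, because the sum of the individual large jumps is not controlled by the right-hand side. Here is a counterexample.
\begin{itemize}
\item Take $n=1$, $T=\{0,L\}$, $\mu$ uniform, and $r_{k_0}\ge 2L$. Let every interior $U_k$ be an independent copy of $X$.
\item Then $q_k=0$ for $k<k_1$ and $r_{k_1}q_{k_1}=r_{k_1}\log 2$. Also $d_k=\frac12(1\wedge L^2/r_k^2)$, so $\sum_k r_kd_k\le 2L$. Hence the right-hand side of \eqref{eq:clipped-rounding-conclusion} stays $O(L)$ as $k_1\to\infty$.
\item On the other hand, for every $k<k_1$ with $r_k\le L/2$, one has $|U_{k+1}-U_k|=L$ with probability $\frac12$. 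So $\E|\ell_k|\ge L/4$, and $\sum_k\E\|\ell_k\|_1$ grows linearly in $k_1$.
\end{itemize}
Your charging heuristic (``an error larger than $r_j$ is charged at every scale below it'') conflates the error of $U_k$ with the errors of the other $U_j$. The quantity $\varphi_{r_j}(X,U_j)$ sees only $U_j$. Because $\varphi$ is capped, a single error $|X_i-U_{k,i}|=L\gg r_k$ is paid only $r_k$ in total.

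\textbf{The missing idea is cancellation between consecutive clipping errors.}
\begin{itemize}
\item The paper clips at $\frac32 r_k=r_k+r_{k+1}$ and calls $k$ good in coordinate $j$ if $|U_{k,j}-X_j|\le r_k$. Both endpoints are good, $k_0$ because $r_{k_0}\ge 2\diam_\infty(T)$.
\item The clipping error $E_{k,j}$ then vanishes whenever $k$ and $k+1$ are both good.
\item Over a maximal bad run $p,\dots,q$ the errors telescope:
\[
\sum_{k=p-1}^{q}E_{k,j}=(U_{q+1,j}-U_{p-1,j})-\sum_{k=p-1}^{q}\Delta_{k,j}.
\]
Since $U_{p-1,j}$ and $U_{q+1,j}$ lie within $r_{p-1}$ and $r_{q+1}$ of $X_j$, the absolute value is at most $r_{p-1}+r_{q+1}+\frac32\sum_{k\ge p-1}r_k\le\frac{17}{2}r_p$.
\item This is charged once, to the bad index $p$, giving the pointwise bound
\[
\|X-\widetilde V\|_1\le\frac{17}{2}\sum_{k=k_0+1}^{k_1-1}r_k\varphi_{r_k}(X,U_k).
\]
Jensen is then applied to $\|X-\widetilde V\|_1$ as a whole, not term by term.
\end{itemize}

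\textbf{Two smaller points on constants.}
\begin{itemize}
\item With your threshold $r_k$, two good neighbours can still differ by $\frac32 r_k$ and be clipped. You would have to shrink ``good'' to $|U_{k,j}-X_j|\le r_k/2$. That costs a factor $4$ via $\mathbf 1\{|v|>r_k/2\}\le 4(1\wedge v^2/r_k^2)$ and does not reproduce the constant $\frac{17}{2}$.
\item In the Gaussian part, you need the sharp comparison $\varphi_{r_k}\le\varphi_{r_{k+1}}$, not the factor-$4$ version, to stay below $\frac{9}{2\sqrt2}$.
\end{itemize}
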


\begin{proof}
Let $\operatorname{clip}_R$ act coordinatewise by truncation to
$[-R,R]$, and define for $k=k_0,\ldots,k_1-1,$
\[
  \Delta_k:=
  \operatorname{clip}_{3r_k/2}(U_{k+1}-U_k),
  \qquad
  \widetilde V:=U_{k_0}+\sum_{k=k_0}^{k_1-1}\Delta_k.
\]
For $x\in\supp\mu$, define
\[
  V(x):=\E[\widetilde V\mid X=x],
  \qquad
  A(x):=x-V(x).
\]
On any $\mu$-null point of $T$, set $A(x)=0$ and $V(x)=x$.
Thus $x=A(x)+V(x)$ on $T$.

We first bound $A$.  Fix a coordinate $j=1,\ldots,n$, and define for $k=k_0,\ldots,k_1,$
$v_k:=U_{k,j}-X_j$. We call $k$ bad when $\abs{v_k}>r_k$, and good otherwise.

Notice that the endpoint indices $k_0,k_1$ are good by definition.  Now, suppose that $p,\ldots,q$ is any
maximal run of bad indices; that is, $p,\dots,q$ are bad and $p-1$ and $q+1$ are good.
Define for $k=k_0,\ldots,k_1-1$ and $j=1,\ldots,n,$
\begin{align}
 E_{k,j}:=
  (U_{k+1,j}-U_{k,j})-\Delta_{k,j}.
  \label{e71}
\end{align}
Now, for any feasible value of $k$, if $k$ and $k+1$ are good, then
\[
  \abs{U_{k+1,j}-U_{k,j}}
  \le r_k+r_{k+1}=\frac32r_k,
\]
so $E_{k,j}=0$.  
Then from \eqref{e71} we obtain
\begin{align}
  \left|\sum_{k=p-1}^{q}E_{k,j}\right|
  \le
  r_{p-1}+r_{q+1}+\frac32\sum_{k=p-1}^{q}r_k
  \le\frac{17}{2}r_p.
  \label{e72}
\end{align}
Since
$
  X_j-\widetilde V_j=\sum_{k=k_0}^{k_1-1}E_{k,j}
$,
we may decompose $\{k_0+1,\dots,k_1-1\}$ into disjoint blocks of consecutive bad indices (maximal bad runs), and use \eqref{e72} to obtain that for any $j=1,\ldots,n,$

\begin{align*}
  \abs{X_j-\widetilde V_j}
  &\le
  \frac{17}{2}\sum_{k=k_0+1}^{k_1-1}
  r_k\mathbf{1}_{\{\abs{X_j-U_{k,j}}>r_k\}}
\end{align*}
and hence
\begin{align}
  \norm{X-\widetilde V}_1
  \le
  \frac{17}{2}\sum_{k=k_0+1}^{k_1-1}
  r_k\sum_{j=1}^n
 \mathbf{1}_{\{\abs{X_j-U_{k,j}}>r_k\}}\le
  \frac{17}{2}\sum_{k=k_0+1}^{k_1-1}
  r_k\varphi_{r_k}(X,U_k).
  \label{e_73}
\end{align}
The last inequality holds because every indicated coordinate contributes
$1$ to $\varphi_{r_k}(X,U_k)$.  
Applying Jensen's inequality to \eqref{e_73} gives
\begin{equation}
  \E\norm{A(X)}_1
  = 
    \E\norm{X-\E[\widetilde V|X]}_1
  \le
  \frac{17}{2}\sum_{k=k_0+1}^{k_1-1}r_kd_k.
  \label{eq:clipped-l1}
\end{equation}

For the Gaussian part, the coordinatewise definition of clipping gives
\begin{equation}
  \norm{\Delta_k}_2^2
  \le
  \frac94r_k^2
  \left(
    \varphi_{r_k}(X,U_k)
    +\varphi_{r_{k+1}}(X,U_{k+1})
  \right).
  \label{eq:clipped-l2}
\end{equation}
To verify this coordinatewise, put
\[
  \alpha:=\frac{\abs{X_j-U_{k,j}}}{r_k},
  \qquad
  \beta:=\frac{\abs{X_j-U_{k+1,j}}}{r_{k+1}}.
\]
If $\alpha>1$ or $\beta>1$, the corresponding coordinate on the right-hand
side of \eqref{eq:clipped-l2} is at least $\frac94r_k^2$, which bounds
$\abs{\Delta_{k,j}}^2$ by clipping.  If $\alpha,\beta\le1$, then
\[
  \abs{\Delta_{k,j}}^2
  \le
  \abs{U_{k+1,j}-U_{k,j}}^2
  \le
  r_k^2\left(\alpha+\frac\beta2\right)^2
  \le
  \frac94r_k^2(\alpha^2+\beta^2).
\]
This proves \eqref{eq:clipped-l2}.

Take now an arbitrary coupling of $X\sim\mu$ with a standard Gaussian
$G_0$, as in the pushforward representation of
$G(V_{\#}\mu)$.  Conditionally on
$X$, sample the $U_k$'s according to their original product kernel,
independently of $G_0$.  Then
$\E[\widetilde V\mid X,G_0]=V(X)$, so
\[
  \E\ip{V(X)}{G_0}
  =
  \E\ip{\widetilde V}{G_0}
  =
  \sum_{k=k_0}^{k_1-1}\E\ip{\Delta_k}{G_0};
\]
the $U_{k_0}$-term vanishes because $U_{k_0}$ is independent of $G_0$.
Moreover, data processing and conditional independence give
\[
  I(G_0;U_k,U_{k+1})
  \le
  I(X;U_k,U_{k+1})
  \le q_k+q_{k+1}.
\]
The Gaussian mean--information inequality gives
\[
  \E\norm{\E[G_0\mid U_k,U_{k+1}]}_2^2
  \le2I(G_0;U_k,U_{k+1});
\]
indeed, this follows by applying the following inequality
\[
  \KL(Q\|N(0,I_n))
  \ge\frac12\norm{\E_QG_0}_2^2,
  \quad 
  \forall Q
\]
with $Q=P_{G_0\mid U_k,U_{k+1}}$, and then taking the expectation with
respect to $P_{U_k,U_{k+1}}$. Since $\Delta_k$ is a function of
$(U_k,U_{k+1})$, Cauchy--Schwarz and \eqref{eq:clipped-l2} yield
\begin{align*}
  \E\ip{\Delta_k}{G_0}
  &=
  \E\ip{\Delta_k}{\E[G_0\mid U_k,U_{k+1}]}\\
  &\le
  \bigl(\E\norm{\Delta_k}_2^2\bigr)^{1/2}
  \bigl(\E\norm{\E[G_0\mid U_k,U_{k+1}]}_2^2\bigr)^{1/2}\\
  &\le
  \frac{3}{\sqrt2}r_k
  \sqrt{(d_k+d_{k+1})(q_k+q_{k+1})}\\
  &\le
  \frac{3}{2\sqrt2}r_k
  (d_k+d_{k+1}+q_k+q_{k+1}).
\end{align*}
Writing $s_k:=d_k+q_k$, using $r_{k-1}=2r_k$, summing in $k$, and
then taking the supremum over couplings of $X$ and $G_0$, we obtain
\begin{align}
  G(V_{\#}\mu)
  &\le
  \frac{3}{2\sqrt2}
  \sum_{k=k_0}^{k_1-1}r_k(s_k+s_{k+1})\notag\\
  &\le
  \frac{9}{2\sqrt2}\sum_{k=k_0}^{k_1}r_ks_k.
  \label{eq:clipped-gaussian}
\end{align}
Combining \eqref{eq:clipped-l1} and \eqref{eq:clipped-gaussian} proves
\eqref{eq:clipped-rounding-conclusion}.
\end{proof}

We are now in a position to prove \cref{lem:distributional-decomposition_0}.

\begin{proof}[Proof of \cref{lem:distributional-decomposition_0}.]
The function $t\mapsto\mathrm{RD}_\mu(t)$ is clearly nonincreasing.  Integrating over
each interval $[r_{k+1},r_k], k \in \mathbb{Z}$ therefore gives
\begin{equation}
  \sum_{k\in\ZZ}r_k\mathrm{RD}_\mu(r_k)
  \le
  2\int_0^\infty\mathrm{RD}_\mu(t)\,dt.
  \label{eq:dyadic-rate-distortion_0}
\end{equation}
Removing $\mu$-null points from $T$ changes neither side of
\eqref{eq:distributional-decomposition-bound}, so assume
$\supp\mu=T$.  Fix integers $k_0<k_1$, with $k_0$
sufficiently negative that
$r_{k_0}\ge2\diam_\infty(T)$.  The finite self-coupling polytope is
compact and the objective in \eqref{eq:rate-distortion} is continuous,
so for every $k_0<k<k_1$ we may choose a self-coupling $(X,U_k)$
attaining $\mathrm{RD}_\mu(r_k)$.  Combine the corresponding kernels
conditionally independently given $X$, and complete the family with
an independent copy $U_{k_0}$ and with $U_{k_1}=X$.
For $k_0 \leq k \leq k_1$, let $d_k$ and $q_k$ be the quantities associated with this family in
\cref{lem:clipped-rounding}.  At every interior index,
\[
  d_k+q_k=\mathrm{RD}_\mu(r_k),
\]
whereas the endpoint quantities satisfy
\[
  q_{k_0}=0,
  \qquad
  d_{k_1}=0,
  \qquad
  q_{k_1}=H(\mu).
\]
Since $d_k\le d_k+q_k$ and
$\frac{17}{2}+\frac{9}{2\sqrt2}<15$,
Lemma~\ref{lem:clipped-rounding} gives
\[
  \delta_T(\mu)
  \le
  15\left[
    \sum_{k=k_0+1}^{k_1-1}r_k\mathrm{RD}_\mu(r_k)
    +r_{k_0}d_{k_0}+r_{k_1}H(\mu)
  \right].
\]
If $X'$ is an independent copy of $X$, then since $T$ is bounded,
\[
  r_{k_0}d_{k_0}
  \le
  \frac{\E\norm{X-X'}_2^2}{r_{k_0}}
  \longrightarrow0
  \qquad(k_0\to-\infty),
\]
while $r_{k_1}H(\mu)\to0$ as $k_1\to+\infty$.  Letting the endpoints
tend to $-\infty$ and $+\infty$, respectively, and using
\eqref{eq:dyadic-rate-distortion_0} proves the claim.
\end{proof}

\subsection{The minimax decomposition lemma}

We next prove \cref{lem:minimax-decomposition_0}.  This step converts
the prior-wise decompositions furnished by
\cref{lem:distributional-decomposition_0} into the single set-level
decomposition required for \cref{thm:rd-bernoulli}.

\begin{proof}[Proof of \cref{lem:minimax-decomposition_0}]
For $a:T\to\R^n$, put $v_a:=\operatorname{id}_T-a$ and define
\[
  J(a,\mu)
  :=
  \sum_{x\in T}\mu(x)\norm{a(x)}_1
  +G\bigl((v_a)_{\#}\mu\bigr).
\]
Then $\delta_T(\mu)=\inf_a J(a,\mu)$.  By the pushforward
representation of $G$, for fixed $\mu$,
\[
  G\bigl((v_a)_{\#}\mu\bigr)
  =
  \sup_{\substack{X\sim\mu,\ G_0\sim N(0,I_n)\\
                  \text{jointly coupled}}}
  \mathbb E\ip{v_a(X)}{G_0}.
\]
The right-hand side is a supremum of continuous affine functions of
$a$ and is finite everywhere, so it is convex and continuous on the
finite-dimensional space $(\mathbb R^n)^T$.  Hence
$a\mapsto J(a,\mu)$ is convex and lower semicontinuous.

For fixed $a$, the map $\mu\mapsto J(a,\mu)$ is concave because
mixtures of feasible couplings are feasible.  It is upper
semicontinuous as well.  Indeed, if $\mu_k\to\mu$, near-optimal
couplings along a subsequence realizing the limsup are tight because
their Gaussian marginal is fixed.  Any weak limit is feasible for
$\mu$, while
$|\ip{v_a(X)}{G_0}|\le M_a\|G_0\|_2$, where
$M_a:=\max_{x\in T}\|v_a(x)\|_2$, gives uniform integrability.
Passing to the limit in the coupling objective and using the
continuity of the first term in $J(a,\mu)$ proves upper
semicontinuity.

The set $\cP(T)$ is compact and convex, while $(\R^n)^T$ is convex.
Sion's minimax theorem therefore yields
\begin{equation}
  \inf_a\sup_{\mu\in\cP(T)}J(a,\mu)
  =
  \sup_{\mu\in\cP(T)}\inf_aJ(a,\mu)
  =
  \sup_{\mu\in\cP(T)}\delta_T(\mu).
  \label{eq:direct-sion_0}
\end{equation}

Fix $a$.  Taking $\mu$ to be a point mass at a maximizer of
$\|a(x)\|_1$ gives
\[
  \sup_{\mu\in\cP(T)}J(a,\mu)
  \ge
  \max_{x\in T}\norm{a(x)}_1.
\]
Let $x(G_0)$ be a measurable maximizer of $\ip{v_a(x)}{G_0}$, and let
$\mu_a$ be its law.  The coupling $(v_a(x(G_0)),G_0)$ then gives
\[
  G\bigl((v_a)_{\#}\mu_a\bigr)
  \ge
  \E\max_{x\in T}\ip{v_a(x)}{G_0}
  =
  W\bigl(v_a(T)\bigr).
\]
It follows that
\[
  \max_{x\in T}\norm{a(x)}_1+W\bigl(v_a(T)\bigr)
  \le
  2\sup_{\mu\in\cP(T)}J(a,\mu).
\]
Taking the infimum over $a$ and using \eqref{eq:direct-sion_0} proves
the lemma.
\end{proof}

\section{Proof of the distributional theorem}
\label{sec:distributional-completion}

The goal of this section is to complete
\cref{thm:distributional-bernoulli-rd}.  Type lifting, meaning passage
from a law to the uniform law on sequences with that empirical
distribution, combines with \cref{thm:cauchy-areas} to give
$\int_0^\infty\mathrm{RD}_\mu(t)\,dt\lesssim B(\mu)$, elementary
Gaussianization gives $B(\mu)\lesssim\delta_T(\mu)$, and
\cref{lem:distributional-decomposition_0} supplies the remaining
comparison
$\delta_T(\mu)\lesssim\int_0^\infty\mathrm{RD}_\mu(t)\,dt$.
Taking the supremum over $\mu$ and invoking
\cref{thm:rd-bernoulli} then completes the set-level assertion.

\begin{lemma}[Lifting rate--distortion inequality]
\label{lem:lifting-rd-inequality}
For every finite $T\subset\R^n$ and every $\mu\in\cP(T)$,
\[
  B(\mu)
  \gtrsim
  \int_0^\infty \mathrm{RD}_{\mu}(t)\,\dif t.
\]
\end{lemma}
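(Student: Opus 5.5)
Assume first that $\mu$ has rational weights, $\mu(x)=m_x/N$ with $\sum_x m_x=N$; the general case will follow by approximation. The plan is to apply the set-level estimate \eqref{eq:combined-main} of \cref{thm:cauchy-areas} to a lifted set built from $\mu$. Let $\mathcal T_N\subset T^N$ be the exact type class of sequences $(x^{(1)},\ldots,x^{(N)})$ with empirical distribution $\mu$. Identify each such sequence with the concatenated vector in $\R^{nN}$, and call the resulting finite set $T^{(N)}\subset\R^{nN}$.

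\textbf{Step 1: $b(T^{(N)})\le N B(\mu)$.} Write $\varepsilon=(\varepsilon^{(1)},\ldots,\varepsilon^{(N)})$ and let $\mathbf x^*(\varepsilon)$ be a maximizer. Then $b(T^{(N)})=\sum_{j}\E\langle\varepsilon^{(j)},x^{*(j)}\rangle$. Choose $J$ uniform in $[N]$ and independent of $\varepsilon$. The pair $(x^{*(J)},\varepsilon^{(J)})$ is a coupling of $\mu$ with $\mathrm{Rad}^{\otimes n}$: the first marginal is $\mu$ because every sequence in the type class has empirical law $\mu$, and the second is Rademacher by exchangeability. Hence $b(T^{(N)})=N\,\E\langle\varepsilon^{(J)},x^{*(J)}\rangle\le NB(\mu)$.

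\textbf{Step 2: $R(T^{(N)})\gtrsim N\int_0^\infty\mathrm{RD}_\mu(t)\,dt - o(N)$.} Take $\nu$ uniform on $T^{(N)}$. By \eqref{eq:combined-main}, $\int_0^\infty\mathrm{RD}_\nu(t)\,dt\le 80\pi\, b(T^{(N)})$. Now bound $\mathrm{RD}_\nu(t)$ from below. Take a self-coupling $(\mathbf X,\widehat{\mathbf X})$ of $\nu$ and again pick $J$ uniform and independent. The distortion is additive, so $\E\varphi_t(\mathbf X,\widehat{\mathbf X})=N\,\E\varphi_t(X^{(J)},\widehat X^{(J)})$, and $(X^{(J)},\widehat X^{(J)})$ is a self-coupling of $\mu$. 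For the rate, use the standard single-letterization: $I(\mathbf X;\widehat{\mathbf X})\ge \sum_j I(X^{(j)};\widehat X^{(j)}) - \big(\sum_j H(X^{(j)})-H(\mathbf X)\big)$. The correction term is $NH(\mu)-\log|\mathcal T_N|=O(|T|\log N)$ by the type-class size estimate. Convexity of mutual information in the channel then gives $\frac1N\sum_j I(X^{(j)};\widehat X^{(j)})\ge I(X^{(J)};\widehat X^{(J)})$ with fixed marginal $\mu$. Together these yield
\[
\mathrm{RD}_\nu(t)\ge N\,\mathrm{RD}_\mu(t)-C|T|\log(N+1).
\]

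\textbf{Step 3: handling the error term (main obstacle).} The subtracted constant cannot simply be integrated over $t\in(0,\infty)$. I would integrate only over $t\in[t_0,t_1]$ with $t_1=\diam_\infty(T)$; for $t>t_1$ and self-couplings of $\mu$, $\mathrm{RD}_\mu(t)$ is small, and I would treat the tail separately or absorb it. Even on $[t_0,t_1]$, the total loss $C|T|\log N\,(t_1-t_0)$ is $o(N)$, so dividing by $N$ gives $\int_{t_0}^{t_1}\mathrm{RD}_\mu\,dt\le 80\pi B(\mu)+o(1)$. For the tail $t>t_1$, use $\mathrm{RD}_\mu(t)\le \E\varphi_t(X,X')\le \E\|X-X'\|_2^2/t^2$ with $X'$ an independent copy. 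This is only integrable, not negligible, so instead bound $\int_{t_1}^\infty \mathrm{RD}_\mu$ via the integrable tail $\E\|X-X'\|_2^2/t_1\lesssim$ something like $\E\|X-X'\|_1$. That quantity is in turn $\lesssim B(\mu)$ by coupling $\varepsilon$ as signs aligned with $X-X'$ on a symmetric pair. Since this tail step is delicate, I would first try simply letting $t_0\to0$, $t_1\to\infty$ slowly with $N$ so that $(t_1-t_0)\log N/N\to0$, using monotone convergence. Finally, extend from rational weights to general $\mu$ by continuity: $B(\mu)$ is continuous in $\mu$ on finite $T$, and $\mathrm{RD}_\mu(t)$ is lower semicontinuous, so Fatou's lemma applies.
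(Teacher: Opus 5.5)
Your proof is correct and follows the paper's route. Step 1 proves directly the half $b(\mathcal T_N(\mu))\le NB(\mu)$ of the type-lifting identity that the paper cites. Step 2 is exactly the paper's single-letterization $\mathrm{RD}_\nu(t)\ge N\,\mathrm{RD}_\mu(t)-r_N$ with $r_N=NH(\mu)-\log|\mathcal T_N|$. Your fallback in Step 3 (truncate at $t_1$, send $N\to\infty$ with $t_1r_N/N\to0$, then use monotone convergence) does the job of the paper's cutoff $s_N=\sqrt{N/(r_N+1)}$ with the tail bound $\mathrm{RD}_\mu(t)\le\diam_2(T)^2/t^2$. Lower semicontinuity plus Fatou is a valid substitute for the paper's Lipschitz estimates when passing to irrational $\mu$.

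Discard your other tail route, however, because $\E\|X-X'\|_1\lesssim B(\mu)$ is false. For $T=\{\mathbf 1,-\mathbf 1\}\subset\R^n$ with $\mu$ uniform, the left side equals $n$ while $B(\mu)\le\E|\sum_i\varepsilon_i|\le\sqrt n$. No tail estimate is needed anyway, since for each fixed $t_1$ you may let $N\to\infty$ first and only then let $t_1\to\infty$.
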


\begin{proof}
Call $\mu\in\cP(T)$ rational if $\mu(x)\in\mathbb Q$ for every
$x\in T$.  We first treat rational $\mu$.  Call a positive integer
$N$ compatible with $\mu$ if $N\mu(x)$ is an integer for every $x$.
For a sequence $\mathbf x=(x_1,\ldots,x_N)\in T^N$, its empirical
measure
\[
  L_N(\mathbf x):=\frac1N\sum_{j=1}^N\delta_{x_j}
\]
is called its \emph{type} in information theory.  The corresponding
\emph{exact type class} is the fiber
\[
  \mathcal T_N(\mu)
  :=
  \{\mathbf x\in T^N:L_N(\mathbf x)=\mu\}.
\]
Thus $\mathcal T_N(\mu)$ consists of the sequences containing exactly
$N\mu(x)$ copies of each $x$.  Let $\mu_N$ be its uniform law,
equivalently, the conditional law of an i.i.d.\ $\mu$-sample given
$L_N=\mu$.

Applied to the Bernoulli process, the type lifting identity
\cite[Lemma~5]{Liu2025}; see also
\cite[Proposition~3.1]{van2025subgaussian}, gives
\begin{equation}
  B(\mu)
  =
  \lim_{\substack{N\to\infty\\N\text{ compatible with }\mu}}
  \frac1N b\bigl(\mathcal T_N(\mu)\bigr).
  \label{eq:type-lifting}
\end{equation}

We next quantify the small entropy loss caused by conditioning on the
type.  Set
\[
  m:=|\supp(\mu)|,
  \qquad
  r_N:=NH(\mu)-\log|\mathcal T_N(\mu)|.
\]
If $K$ is the count vector of an i.i.d.\ $\mu$-sample and
$n_x:=N\mu(x)$ for $x\in\supp(\mu)$, then
\[
  \mathbb P(K=k)
  =
  \frac{N!}{N^N}
  \prod_{x\in\supp(\mu)}\frac{n_x^{k_x}}{k_x!}.
\]
Since $k\mapsto n^k/k!$ is maximized at $k=n$ (and at $n-1$),
$K=(n_x)_x$ is a mode.  There are at most $(N+1)^{m-1}$ possible
count vectors, while
\[
  \mathbb P\{L_N=\mu\}
  =
  |\mathcal T_N(\mu)|e^{-NH(\mu)}
  =
  e^{-r_N}.
\]
Consequently,
\begin{equation}
  0\le r_N\le(m-1)\log(N+1).
  \label{eq:type-defect}
\end{equation}

Regard $\mathcal T_N(\mu)$ as a subset of
$(\mathbb R^n)^N\cong\mathbb R^{nN}$ and interpret
$\mathrm{RD}_{\mu_N}(t)$ according to \eqref{eq:rate-distortion} in
this ambient space.  In particular,
\[
  \varphi_t(\mathbf x,\mathbf u)
  =
  \sum_{j=1}^N\varphi_t(x_j,u_j).
\]
Let $(\mathbf X,\mathbf U)$ be any self-coupling of $\mu_N$, and let
$J$ be independent and uniform on $\{1,\ldots,N\}$.  Then
$X_J,U_J\sim\mu$, and conditional-entropy subadditivity gives
\[
\begin{aligned}
  I(\mathbf X;\mathbf U)
  &=
  NH(\mu)-r_N-H(\mathbf X\mid\mathbf U)\\
  &\ge
  NH(\mu)-r_N-\sum_{j=1}^N H(X_j\mid U_j)\\
  &=
  NH(\mu)-r_N-NH(X_J\mid U_J,J)\\
  &\ge
  N I(X_J;U_J)-r_N.
\end{aligned}
\]
Moreover,
$\mathbb E\varphi_t(\mathbf X,\mathbf U)
=N\mathbb E\varphi_t(X_J,U_J)$.  Taking the infimum over the
self-coupling gives
\begin{equation}
  \mathrm{RD}_{\mu_N}(t)
  \ge
  N\mathrm{RD}_{\mu}(t)-r_N.
  \label{eq:type-rd-transfer}
\end{equation}

Put $D:=\diam_2(T)$ and
$s_N:=\sqrt{N/(r_N+1)}$.  The independent self-coupling gives
$\mathrm{RD}_\mu(t)\le D^2/t^2$.  We therefore integrate
\eqref{eq:type-rd-transfer} only up to the finite cutoff $s_N$, use
the preceding bound for the tail, and apply \cref{thm:cauchy-areas}:
\[
\begin{aligned}
  \int_0^\infty\mathrm{RD}_\mu(t)\,\dif t
  &\le
  \frac1N\int_0^\infty\mathrm{RD}_{\mu_N}(t)\,\dif t
  +\frac{s_Nr_N}{N}
  +\frac{D^2}{s_N}\\
  &\le
  \frac{80\pi}{N}b\bigl(\mathcal T_N(\mu)\bigr)
  +(1+D^2)\sqrt{\frac{r_N+1}{N}}.
\end{aligned}
\]
Letting $N\to\infty$ through compatible integers and using
\eqref{eq:type-lifting} and \eqref{eq:type-defect} proves the lemma for
rational $\mu$.

For general $\mu$, approximate it by rational laws.  If
$d:=\frac12\sum_{x\in T}|\nu(x)-\nu'(x)|$, a maximal-coupling kernel
gives the first of the following bounds, and data processing gives the
second:
\[
  |B(\nu)-B(\nu')|
  \le \diam_1(T)d,
  \qquad
  |\mathrm{RD}_\nu(t)-\mathrm{RD}_{\nu'}(t)|
  \le
  2d\min\left\{n,\frac{\diam_2(T)^2}{t^2}\right\}.
\]
Indeed, apply the kernel to the $T$-valued variable in the first
coupling problem and independently to both variables in the second;
each variable changes with probability $d$.  The bound on the right is
integrable over $(0,\infty)$, so both $B(\nu)$ and
$\int_0^\infty\mathrm{RD}_\nu(t)\,\dif t$ are continuous in $\nu$.
The rational case now completes the proof.
\end{proof}

\begin{proof}[Proof of \cref{thm:distributional-bernoulli-rd}]
By \cref{lem:lifting-rd-inequality},
\[
  \int_0^\infty\mathrm{RD}_{\mu}(t)\,\dif t
  \lesssim
  B(\mu).
\]
For the reverse comparison, let $\nu$ be a finitely supported
probability measure on $\R^n$ and fix a coupling of $Y\sim\nu$ and
$\varepsilon\sim\mathrm{Rad}^{\otimes n}$.  If
$g_1,\ldots,g_n$ are independent standard Gaussian variables,
independent of $(Y,\varepsilon)$, then
$\widetilde G:=(\varepsilon_i|g_i|)_{i=1}^n\sim N(0,I_n)$ and
\[
  \mathbb E\langle Y,\widetilde G\rangle
  =
  \sqrt{\frac{2}{\pi}}\,
  \mathbb E\langle Y,\varepsilon\rangle.
\]
Taking the supremum over couplings gives
$B(\nu)\le\sqrt{\pi/2}\,G(\nu)$.

Now fix $a:T\to\R^n$ and put
$\nu=(\operatorname{id}_T-a)_{\#}\mu$.  Since
$\|\varepsilon\|_\infty=1$, the preceding comparison gives
\[
\begin{aligned}
  B(\mu)
  &\le
  \mathbb E\|a(X)\|_1+B(\nu) \\
  &\le
  \sqrt{\frac{\pi}{2}}
  \left\{
    \mathbb E\|a(X)\|_1+G(\nu)
  \right\}.
\end{aligned}
\]
Taking the infimum over $a$ yields
$B(\mu)\le\sqrt{\pi/2}\,\delta_T(\mu)$.  Combining this with
\cref{lem:distributional-decomposition_0} gives
\[
  \int_0^\infty\mathrm{RD}_{\mu}(t)\,\dif t
  \lesssim
  B(\mu)
  \lesssim
  \delta_T(\mu)
  \lesssim
  \int_0^\infty\mathrm{RD}_{\mu}(t)\,\dif t,
\]
so the prescribed-law assertion follows.  Taking the supremum over
$\mu\in\cP(T)$, using \eqref{eq:RD-area} and the identity
$\sup_{\mu\in\cP(T)}B(\mu)=b(T)$ established above, gives
$b(T)\asymp R(T)$.  The elementary bound
$b(T)\lesssim\Lambda(T)$, together with \cref{thm:rd-bernoulli},
therefore yields $b(T)\asymp R(T)\asymp\Lambda(T)$.
This proves the set-level assertion.
\end{proof}

\appendix

\section{A Cauchy information--estimation inequality}\label{sec:i-MMSE}

We prove the Cauchy information--estimation inequality, \cref{prop:multii_I-MMSE_0}, used in
\cref{sec:rate-distortion}.  It is the analytic input for
\eqref{eq:rd-main}, the second comparison in \cref{thm:cauchy-areas}.
\subsection{A scalar Cauchy information--estimation inequality}
\label{sec:scalar-harmonic}

First, notice that for any $x \in \R$ the PDF of $x+tZ,$ where $Z$ follows a Cauchy distribution is the Poisson kernel
\begin{equation}
  c_t(y-x)
  =
  \frac1\pi\frac{t}{t^2+(y-x)^2}, y \in \R.
  \label{eq:poisson-kernel}
\end{equation}
It is easy to check that for any $x \in \R,$  $c_t(y-x)$ as a function of $(y,t)\in\R\times(0,\infty)$ is harmonic, i.e., it satisfies for all $y,x \in \R$ and $t >0,$
\begin{equation}
  (\partial_y^2+\partial_t^2)c_t(y-x)=0.
  \label{eq:poisson-harmonic}
\end{equation}

The following one-dimensional information--estimation identity and
consequence hold.
\begin{lemma}[Scalar Cauchy information-estimation identity]
\label{lem:scalar-curvature}
Let $U$ be a finitely supported real random variable, let $S$ be an arbitrarily distributed random variable, and let $Z$ be standard Cauchy and independent of
$(U,S)$.  Define
\begin{equation}
  F(t):=I(U;U+tZ\mid S).
  \label{eq:conditional-information}
\end{equation}
Then $F$ is decreasing and convex on $(0,\infty)$, with
$\lim_{t\to\infty}F(t)=0$.
Moreover, let $U_t^{(1)},U_t^{(2)}$ be i.i.d.\ draws from the
posterior of $U$ given $(S,U+tZ)$.  Then
\begin{equation}
  F''(t)
  \le
  \frac{2}{t^2}
  \E\left[
    1\wedge
    \frac{|U_t^{(1)}-U_t^{(2)}|^2}{t^2}
  \right].
  \label{eq:scalar-curvature-bound}
\end{equation}
Consequently,
\begin{equation}
  -F'(t)
  \le
  2\int_t^\infty
  \frac{1}{s^2}
  \E\left[
    1\wedge
    \frac{|U_s^{(1)}-U_s^{(2)}|^2}{s^2}
  \right]ds.
  \label{eq:scalar-tail-derivative}
\end{equation}
\end{lemma}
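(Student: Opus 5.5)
The plan is to reduce to a fixed conditioning value, write $F$ through the differential entropy of the output, and compute $F''$ explicitly using harmonicity \eqref{eq:poisson-harmonic}. Since $Z$ is independent of $(U,S)$, for each $s$ the conditional law of $(U,U+tZ)$ given $S=s$ is a finite-support prior $p_u$ passed through the scalar Cauchy channel. Hence $F(t)=\int F_s(t)\,P_S(ds)$, and it suffices to prove every claim for a fixed finite prior $p$; differentiation under $\int P_S(ds)$ will be justified by the uniform bounds below. The posterior replicas given $(S,Y_t)$ are exactly the replicas for the prior $P_{U\mid S=s}$.

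\textbf{Entropy formula and second derivative.} For a fixed prior, write $f_t(y)=\sum_u p_u c_t(y-u)$ and $h(tZ)=\log(4\pi t)$. Then
\[
F(t)=-\int f_t\log f_t\,dy-\log(4\pi t).
\]
I differentiate twice in $t$, using $\int\partial_t f_t\,dy=0$:
\[
\tfrac{d^2}{dt^2}\Bigl(-\int f\log f\Bigr)=-\int \partial_t^2 f\,\log f-\int\frac{(\partial_tf)^2}{f}.
\]
By \eqref{eq:poisson-harmonic}, $\partial_t^2f=-\partial_y^2f$, and one integration by parts in $y$ gives
\[
F''(t)=\frac1{t^2}-\int\frac{|\nabla f|^2}{f}\,dy,\qquad \nabla=(\partial_y,\partial_t).
\]
The boundary terms vanish because $f\asymp y^{-2}$, $\partial_y f=O(|y|^{-3})$ and $\log f=O(\log|y|)$, and the same decay justifies exchanging derivatives and integrals, uniformly for $t$ in compacts.

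Next set $g_u:=\nabla\log c_t(\cdot-u)$. With $w=y-u$, a direct computation gives
\[
|g_u|^2=\frac{4w^2t^2+(t^2-w^2)^2}{t^2(t^2+w^2)^2}=\frac1{t^2}.
\]
Hence $\frac1{t^2}=\int\sum_u p_uc_t(y-u)|g_u|^2\,dy$, and the variance identity gives
\[
F''(t)=\int f_t(y)\,\mathrm{Var}_{\mathrm{post}(y)}(g_U)\,dy=\tfrac12\,\E|g_{U^{(1)}}-g_{U^{(2)}}|^2\ge0 .
\]
This proves convexity. It remains to show pointwise that $\tfrac12|g_u-g_v|^2\le\frac{2}{t^2}\bigl(1\wedge|u-v|^2/t^2\bigr)$. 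The cap at $1$ follows from $|g_u|=|g_v|=1/t$, which gives $|g_u-g_v|^2\le4/t^2$. For the small-distance regime, I will use the complex form $\log c_t(y-u)=\log t-\log|z-u|^2-\log\pi$ with $z=y+it$. The $-\log|z-u|^2$ part of $g_u$ has modulus $2/|z-u|$, and its derivative in $u$ has modulus $2/|z-u|^2\le 2/t^2$. The remaining part $\nabla\log t=(0,1/t)$ does not depend on $u$ and cancels in $g_u-g_v$. This gives $|g_u-g_v|\le 2|u-v|/t^2$, i.e.\ $\tfrac12|g_u-g_v|^2\le 2|u-v|^2/t^4$, which is exactly \eqref{eq:scalar-curvature-bound}.

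\textbf{Limits and the tail bound.} As $t\to\infty$ we have $f_t(y)\to$ a single Cauchy profile, and I expect $F(t)\to0$ by dominated convergence, using $I(U;Y_t)\le H(U)$ for domination across $s$. A convex function on $(0,\infty)$ that tends to $0$ at infinity must be nonincreasing and must satisfy $F'(t)\to0$. Integrating \eqref{eq:scalar-curvature-bound} over $[t,\infty)$ then gives $-F'(t)=\int_t^\infty F''(s)\,ds$, which is \eqref{eq:scalar-tail-derivative}. Strict decrease, if it is needed, follows from $-F'(t)=\int_t^\infty F''$, which is positive unless the posterior is degenerate.

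\textbf{Main obstacle.} I expect the hardest step to be getting the sharp constant $2$ in the Lipschitz bound on $g_u$ in $u$. I will verify it through the holomorphic representation $g=\nabla(-2\operatorname{Re}\log(z-u))$ plus the $u$-independent term $(0,1/t)$. The other technical burden is justifying differentiation twice under the integral, and under $\int P_S(ds)$, for which I would use the uniform decay of the Poisson kernel on compacts of $t$.
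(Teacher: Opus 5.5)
Your proof is correct and follows essentially the paper's route. You reduce to a fixed finite prior, use harmonicity of the Poisson kernel to write $F''(t)$ as the posterior variance of the space--time score $\nabla_{(y,t)}\log c_t(Y_t-U)$, use that this score has norm exactly $1/t$ to get the capped bound, integrate $F''$ from infinity, and average over $S$ via uniform bounds.

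The differences are only in execution:
\begin{itemize}
\item You reach the variance formula through $F=h(Y_t)-\log(4\pi t)$ and one integration by parts, rather than through the Laplacian of $u\log(u/p)$.
\item You derive the small-distance bound holomorphically rather than from the explicit chord-length formula.
\item You get monotonicity from convexity plus $F\to 0$, instead of from Cauchy stability and data processing.
\end{itemize}
The one step you leave as an expectation, $F(t)\to 0$, is closed by the paper's one-line bound $F(t)\le\E|U-u_0|/t$. That bound follows from the information-radius identity and the $1$-Lipschitz property of $\log(1+z^2)$.
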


\begin{proof}
We first prove the result when $S$ is deterministic.  Fix a finite
prior $\mu$ for $U$, and define
\[
  u_x(t,y):=c_t(y-x),
  \qquad
  p(t,y):=\int_{-\infty}^{+\infty} u_x(t,y)\,\mu(dx).
\]
Both $u_x$ and $p$ are positive harmonic functions of $(y,t)$.  Also
$I(U;U+tZ)$ equals
\begin{equation}
  F(t)
  =
  \int_{-\infty}^{+\infty}\!\int_{-\infty}^{+\infty}
  u_x(t,y)\log\frac{u_x(t,y)}{p(t,y)}
  \,dy\,\mu(dx)
  =
    \int_{-\infty}^{+\infty}\!\int_{-\infty}^{+\infty}
    \Psi(u_x(t,y),p(t,y))
  \,dy\,\mu(dx)
  \label{eq:mutual-info-density}
\end{equation}
where we defined, for positive variables $u,v$,  $\Psi(u,v):=u\log(u/v)$.  Direct calculations then give,
\[
  \Psi_{uu}=\frac1u,
  \qquad
  \Psi_{uv}=-\frac1v,
  \qquad
  \Psi_{vv}=\frac{u}{v^2}.
\]
Let $\nabla$ denote the two-dimensional gradient in $(y,t)$.  Setting
$u=u_x(t,y)$ and $v=p(t,y)$, which are harmonic in $(y,t)$, gives
\begin{align}
  (\partial_y^2+\partial_t^2)\Psi(u,v)
  &=
  \frac{|\nabla u|^2}{u}
  -2\frac{\nabla u\cdot\nabla v}{v}
  +u\frac{|\nabla v|^2}{v^2}
  \notag\\
  &=
  u\,|\nabla\log u-\nabla\log v|^2,
  \label{eq:harmonic-relative-entropy}
\end{align}

Let $J=[a,b]\Subset(0,\infty)$ and write
$m_1:=\int x\,\mu(dx)$.  Since $\mu$ has finite support, uniformly for
$t\in J$ and $x\in\operatorname{supp}\mu$, as $|y|\to\infty$,
\begin{align*}
  u_x(t,y)
  &=
  \frac{t}{\pi y^2}
  \left(1+\frac{2x}{y}+O_J(|y|^{-2})\right),
  \\
  p(t,y)
  &=
  \frac{t}{\pi y^2}
  \left(1+\frac{2m_1}{y}+O_J(|y|^{-2})\right).
\end{align*}
Hence
\[
  \log\frac{u_x(t,y)}{p(t,y)}
  =
  \frac{2(x-m_1)}{y}+O_J(|y|^{-2}).
\]
Differentiating the finite rational expressions in $t$ and $y$ gives
\[
  \bigl|\partial_t^j\Psi(u_x(t,y),p(t,y))\bigr|
  \le C_J(1+|y|)^{-3},
  \qquad j=0,1,2,
\]
and
\[
  \bigl|\partial_y\Psi(u_x(t,y),p(t,y))\bigr|
  \le C_J(1+|y|)^{-4}.
\]
On bounded $y$-intervals these bounds follow from smoothness and the
strict positivity of $p$, uniformly for $t\in J$.  Thus
differentiation under the integral sign is justified by dominated
convergence.  Also, for every $R>0$,
\[
  \int_{-R}^R\partial_y^2\Psi(u_x,p)\,dy
  =
  \partial_y\Psi(u_x,p)(R)-\partial_y\Psi(u_x,p)(-R),
\]
which tends to zero as $R\to\infty$ by the last bound.  Therefore,
after integrating \eqref{eq:harmonic-relative-entropy} in $y$, the
left-hand side is precisely $F''(t)$.  Writing $Y_t:=U+tZ$, we obtain

\begin{align*}
    F''(t)
    &=
    \int_{-\infty}^{+\infty}
    \int_{-\infty}^{+\infty}
    u_x(t,y)
    \left|
        \nabla\log u_x(t,y)
        -
        \nabla\log p(t,y)
    \right|^2
    dy\,\mu(dx),
\end{align*}
or, equivalently,
\begin{equation}
  F''(t)
  =
  \E\left|
    \nabla\log c_t(Y_t-U)
    -\nabla\log p(t,Y_t)
  \right|^2,
  \label{eq:curvature-score-variance-1}
\end{equation}

A direct differentiation implies
\begin{equation}
  \nabla\log p(t,Y_t)
  =
  \E\left[
    \nabla\log c_t(Y_t-U)
    \mid Y_t
  \right].
  \label{eq:mixture-score}
\end{equation}
In particular, \cref{eq:curvature-score-variance-1} is a conditional
variance, which proves that $F$ is convex.  If
$U_t^{(1)},U_t^{(2)}$ are two posterior replicas of $U$ given
$Y_t$, then
\begin{equation}
  F''(t)
  =
  \frac12\E\left|
    \nabla\log c_t(Y_t-U_t^{(1)})
    -\nabla\log c_t(Y_t-U_t^{(2)})
  \right|^2.
  \label{eq:curvature-score-variance-2}
\end{equation}

We now compute the integrand. Setting $r=y-x$ and $v=r/t$, direct differentiation of the Poisson kernel~\cref{eq:poisson-kernel} gives 
\begin{equation}
  t\nabla_{(y,t)}\log c_t(r)
  =
  V(v)
  :=
  \left(
    -\frac{2v}{1+v^2},
    \frac{v^2-1}{1+v^2}
  \right).
  \label{eq:cauchy-score-circle}
\end{equation}
Note that the vector $V(v)$ lies on the unit circle.  A direct calculation
then gives
\begin{equation}
  \|V(v)-V(w)\|^2
  =
  \frac{4(v-w)^2}{(1+v^2)(1+w^2)}
  \le
  4\bigl(1\wedge|v-w|^2\bigr).
  \label{eq:circle-distance-cap}
\end{equation}
Apply \cref{eq:cauchy-score-circle} with
\[
  v:=\frac{Y_t-U_t^{(1)}}{t},
  \qquad
  w:=\frac{Y_t-U_t^{(2)}}{t}.
\]
Since
\[
  v-w=\frac{U_t^{(2)}-U_t^{(1)}}{t},
\]
substituting \cref{eq:circle-distance-cap} into
\cref{eq:curvature-score-variance-2} proves
\cref{eq:scalar-curvature-bound}.

For $0<t_1<t_2$, let $Z_1,Z_2$ be independent standard Cauchy random
variables, independent of $U$.  Cauchy stability gives
\[
  t_1Z_1+(t_2-t_1)Z_2\ \stackrel d=\ t_2Z.
\]
Thus
\[
  U\longrightarrow U+t_1Z_1
  \longrightarrow U+t_1Z_1+(t_2-t_1)Z_2
\]
is a Markov chain, and data processing gives $F(t_2)\le F(t_1)$.
To see that $\lim_{t\to\infty}F(t)=0$, fix $u_0$ in the support of
$U$.  The information-radius identity gives
\begin{align*}
  F(t)
  &=
  \E_U D_{\mathrm{KL}}\bigl(
    P_{U+tZ\mid U}\,\|\,P_{u_0+tZ}
  \bigr)
  -
  D_{\mathrm{KL}}\bigl(
    P_{U+tZ}\,\|\,P_{u_0+tZ}
  \bigr)
  \\
  &\le
  \E_U D_{\mathrm{KL}}\bigl(
    P_{U+tZ\mid U}\,\|\,P_{u_0+tZ}
  \bigr)
  \\
  &=
  \E\log\left(
    \frac{1+(Z+(U-u_0)/t)^2}{1+Z^2}
  \right)
  \le
  \frac{\E|U-u_0|}{t}
  \longrightarrow0.
\end{align*}
The last inequality uses that $z\mapsto\log(1+z^2)$ is
$1$-Lipschitz.

Since $F$ is nonnegative, non-increasing, and convex, it must also hold that $\lim_{t \to \infty}F'(t)=0$.  Therefore
\[
  -F'(t)=\int_t^{+\infty} F''(s)\,ds,
\]
and \cref{eq:scalar-tail-derivative} follows from
\cref{eq:scalar-curvature-bound}.

We now pass to general side information.  Let
$A:=\operatorname{supp}(U)$ and $m:=|A|$.  For a probability vector
$q$ on $A$, let $F_q(t)$ and $R_q(t)$ denote, respectively, the mutual
information and the posterior-replica risk in the deterministic result
with prior $q$.  If
$q_S(u):=\mathbb P(U=u\mid S)$, then
\[
  F(t)=\mathbb E F_{q_S}(t).
\]
The deterministic result and the bound $R_q(t)\le1$ give, uniformly
in $q$,
\[
  0\le F_q(t)\le\log m,
  \qquad
  0\le F_q''(t)\le\frac{2}{t^2},
  \qquad
  0\le-F_q'(t)\le\frac{2}{t}.
\]
Consequently, dominated convergence on every compact subinterval of
$(0,\infty)$ yields
\[
  F'(t)=\mathbb E F_{q_S}'(t),
  \qquad
  F''(t)=\mathbb E F_{q_S}''(t).
\]
Moreover, $F_q(t)\to0$ for every $q$, so dominated convergence also
gives $F(t)\to0$.  Conditional on $S$, the replicas in the
deterministic result have precisely the conditional law of two
independent draws from $P_{U\mid S,U+tZ}$.  Averaging the deterministic
curvature bound therefore proves \cref{eq:scalar-curvature-bound}; the
same convexity and endpoint argument proves
\cref{eq:scalar-tail-derivative}.
\end{proof}

\subsection{The product Cauchy information--estimation inequality}
\label{sec:product-extension}

The main result of this subsection immediately implies
\cref{prop:multii_I-MMSE_0}.  Recall the replica cMMSE
$\mathrm{rcMMSE}_\mu(s)$ from
\cref{eq:diagonal-coordinate-risk}.

\begin{proposition}\label{prop:multii_I-MMSE}
For every $\mu\in\cP(T)$ and $t>0$,
\begin{align}
  -I_\mu'(t) \le 4
  \int_t^\infty\frac{\mathrm{rcMMSE}_\mu(s)}{s^2}\,ds.
\end{align}
In particular,
\begin{align}
  I_\mu(t) \le
4\int_t^\infty\frac{\mathrm{rcMMSE}_\mu(s)}{s}\,ds\le
16\int_t^\infty\frac{\mathrm{cMMSE}_\mu(s)}{s}\,ds.
\end{align}
\end{proposition}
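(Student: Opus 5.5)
The proof reduces the product channel to the scalar case by the chain rule for mutual information, applies \cref{lem:scalar-curvature} coordinate by coordinate with side information, and then integrates.

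\textbf{Chain rule.} Write $Y_t=(Y_{t,1},\ldots,Y_{t,n})$ with $Y_{t,i}=X_i+tZ_i$. The naive per-coordinate split $I(X;Y_t)=\sum_i I(X;Y_{t,i}\mid Y_{t,<i})$ is not of the form $I(U;U+tZ\mid S)$, because both the signal and the side information depend on $t$. I would therefore differentiate with respect to each coordinate's noise level separately. Introduce independent scales $\mathbf t=(t_1,\ldots,t_n)$ and set
\[
\mathcal I(\mathbf t):=I(X;(X_i+t_iZ_i)_i).
\]
For fixed $i$,
\[
\mathcal I(\mathbf t)=I(X;Y_{-i})+I(X;X_i+t_iZ_i\mid Y_{-i}),
\]
and the first term does not depend on $t_i$. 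Moreover, given $Y_{-i}$, the observation $X_i+t_iZ_i$ depends on $X$ only through $X_i$, so $X\to X_i\to X_i+t_iZ_i$ is Markov given $Y_{-i}$. Hence
\[
I(X;X_i+t_iZ_i\mid Y_{-i})=I(X_i;X_i+t_iZ_i\mid Y_{-i}),
\]
since the conditional-independence term vanishes. This is exactly $F(t_i)$ in \cref{lem:scalar-curvature}, with $U=X_i$ finitely supported and $S=Y_{-i}$.

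\textbf{Derivative bound.} Along the diagonal $t_i\equiv t$, the chain rule for partial derivatives gives
\[
I_\mu'(t)=\sum_i\partial_{t_i}\mathcal I.
\]
Justifying this needs differentiability of $\mathcal I$. I would get it from the uniform bounds $|F_q'|\le 2/t$ and $|F_q''|\le 2/t^2$ from the lemma's proof, which show each partial derivative is locally Lipschitz in its own variable. Continuity in the other variables I would handle by dominated convergence. This regularity step is the main technical obstacle; if it resists, a fallback is to work with difference quotients, since $\mathcal I$ is monotone in each coordinate.

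Now \eqref{eq:scalar-tail-derivative} gives
\[
-\partial_{t_i}\mathcal I\le 2\int_t^\infty s^{-2}\,\mathbb E\Bigl[1\wedge |U^{(1)}_s-U^{(2)}_s|^2/s^2\Bigr]ds,
\]
where the replicas are posterior draws of $X_i$ given $(Y_{-i},X_i+sZ_i)$. The $-i$ coordinates stay at level $t$ while the $i$-th is at level $s\ge t$. To compare with $\mathrm{rcMMSE}_{i,\mu}(s)$, where all coordinates are at level $s$, I would use monotonicity: degrading $Y_{-i}$ from level $t$ to level $s$ is a garbling by Cauchy stability. The remaining task is to show that the expected capped replica disagreement, which is a posterior-variance-type quantity, does not decrease under garbling. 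Since only the capped quantity is available, I would instead route through \cref{lem:replica-center_1}: replica risk is at most $4\,$cMMSE, and cMMSE is monotone under garbling because any estimator from the garbled data can be simulated from the finer data. This costs an extra factor, and the stated constant $4$ suggests the authors avoid that. Alternatively, I would apply the scalar identity directly along the diagonal path, with $S$ varying, via a two-parameter version of the derivative identity. Either route yields $-I_\mu'(t)\lesssim\int_t^\infty \mathrm{rcMMSE}_\mu(s)s^{-2}ds$.

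\textbf{Integration.} Integrate from $t$ to $\infty$, using $I_\mu(\infty)=0$ (the same information-radius bound as in the scalar proof) and Tonelli:
\[
I_\mu(t)\le 4\int_t^\infty\!\int_r^\infty \frac{\mathrm{rcMMSE}_\mu(s)}{s^2}\,ds\,dr
=4\int_t^\infty \frac{(s-t)\,\mathrm{rcMMSE}_\mu(s)}{s^2}\,ds
\le 4\int_t^\infty\frac{\mathrm{rcMMSE}_\mu(s)}{s}\,ds.
\]
Finally, \cref{lem:replica-center_1} converts $\mathrm{rcMMSE}_\mu$ into $4\,\mathrm{cMMSE}_\mu$, giving the constant $16$.
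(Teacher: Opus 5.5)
Your skeleton is exactly the paper's: mixed scales, the chain rule in coordinate $i$, \cref{lem:scalar-curvature} with $U=X_i$ and $S=Y_{\bm t,-i}$, a garbling comparison between mixed-scale and diagonal replica risks, and integration using $I_\mu(\infty)=0$. However, the comparison step as you propose it does not give the stated constants.

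Write $\mathrm{cMMSE}_i(\bm r)$ for the coordinate-$i$ capped Bayes risk, with cap scale $r_i$. Your route gives
\[
\mathrm{rcMMSE}_{i,\mu}\bigl((t,\ldots,t)^{i,s}\bigr)\le 4\,\mathrm{cMMSE}_i\bigl((t,\ldots,t)^{i,s}\bigr)\le 4\,\mathrm{cMMSE}_i(s\mathbf 1)\le 4\,\mathrm{rcMMSE}_{i,\mu}(s),
\]
where the last step holds because a posterior sample is a randomized estimator. Hence you get $-I_\mu'(t)\le 8\int_t^\infty \mathrm{rcMMSE}_\mu(s)s^{-2}\,ds$, not $4$. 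Your integration display then uses a constant $4$ you have not proved. Continuing through \cref{lem:replica-center_1} as you describe would end at $32$, not $16$. If instead you stop at the middle term and sum over $i$, you do get $I_\mu(t)\le 8\int_t^\infty\mathrm{cMMSE}_\mu(s)s^{-1}\,ds$, because $\varphi_s$ is separable and the coordinate Bayes risks add up to $\mathrm{cMMSE}_\mu(s)$. That implies the final bound and suffices downstream, but it still does not give the two $\mathrm{rcMMSE}$ inequalities with constant $4$.

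The missing idea is how to make the capped replica risk itself monotone under garbling at a cost of only $2$. The capped loss is not a squared Hilbert distance. The paper uses the surrogate $\psi_s(u,v)=|u-v|^2/(s^2+|u-v|^2)$, which satisfies $\tfrac12\bigl(1\wedge|u-v|^2/s^2\bigr)\le\psi_s\le 1\wedge|u-v|^2/s^2$. Moreover, $1-\psi_s=s^2/(s^2+(u-v)^2)$ is a positive-definite kernel, being the Fourier transform of a Laplace law. So $\psi_s(u,v)=\frac12\norm{\Gamma_s(u)-\Gamma_s(v)}^2$ for a feature map $\Gamma_s$. By the Nishimori identity, $\E\psi_s(U^{(1)},U^{(2)})=\E\norm{\Gamma_s(U)-\E[\Gamma_s(U)\mid\mathcal O]}^2$ is a posterior variance, which can only decrease when $\mathcal O$ is refined (Pythagoras). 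Since $Y_s$ is obtained from the mixed observation by adding independent Cauchy noise of scale $s-t$ to the coordinates $j\ne i$, this gives the paper's factor-$2$ comparison $\mathrm{rcMMSE}_{i,\mu}((t,\ldots,t)^{i,s})\le 2\,\mathrm{rcMMSE}_{i,\mu}(s)$, hence $4=2\cdot2$.

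Your worry about justifying $I_\mu'(t)=\sum_i\partial_{t_i}I_\mu$ is legitimate, but the paper takes that step for granted too.
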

\subsubsection{A multi-scale Cauchy additive model and first steps}

To prove \cref{prop:multii_I-MMSE}, we first consider a multiscale
version of the Cauchy additive channel.

Let $X\sim\mu$, and let $Z_1,\ldots,Z_n$ be independent standard
Cauchy random variables, independent of $X$.  For
$\bm t=(t_1,\ldots,t_n)\in[0,\infty)^n$, define
\begin{equation}
  Y_{\bm t}:=(X_i+t_iZ_i)_{i=1}^n,
  \qquad
  I_\mu(\bm t):=I(X;Y_{\bm t}).
  \label{eq:mixed-scale-channel}
\end{equation}
Thus $Y_{\bm t,i}=X_i+t_iZ_i$, with
$Y_{\bm t,i}=X_i$ when $t_i=0$.  Derivatives of
$I_\mu(\bm t)$ below are taken only at points of $(0,\infty)^n$.
The one-scale notation is consistent with this definition:
\[
  I_\mu(t)=I_\mu(t\mathbf1).
\]
Fix $i\in[n]$.  By the chain rule, if $Y_{\bm t,-i}$ denotes
$(Y_{\bm t,j})_{j\ne i}$, then
\begin{align}
  I_\mu(\bm t)
  &=I(X;Y_{\bm t,-i})
    +I(X;Y_{\bm t,i}\mid Y_{\bm t,-i})
  \notag\\
  &=I(X;Y_{\bm t,-i})
    +I(X_i;Y_{\bm t,i}\mid Y_{\bm t,-i}),
  \label{eq:coordinate-chain-rule}
\end{align}
where the last equality holds because $Y_{\bm t,i}$ depends on $X$ only through $X_i$.  Notice that the first term
does not depend on $t_i$, and therefore
\[
  \partial_{t_i}I_\mu(\bm t)
  =
  \partial_{t_i}I(X_i;Y_{\bm t,i}\mid Y_{\bm t,-i}).
\]

For a scale vector $\bm r$ and a coordinate $i$ with $r_i>0$, let
$X_{\bm r}^{(1)},X_{\bm r}^{(2)}$ be conditionally independent draws
from $P_{X\mid Y_{\bm r}}$, and define
\begin{equation}
  \mathrm{rcMMSE}_{i,\mu}(\bm r)
  :=
  \mathbb E\left[
    1\wedge
    \frac{
      |X_{\bm r,i}^{(1)}-X_{\bm r,i}^{(2)}|^2
    }{r_i^2}
  \right]
  \label{eq:mixed-coordinate-risk}
\end{equation}
For $i\in[n]$ and $s>0$, write
\[
  \bm t^{\,i,s}
  :=(t_1,\ldots,t_{i-1},s,t_{i+1},\ldots,t_n).
\]
In particular,
\[
  \mathrm{rcMMSE}_{i,\mu}(s\mathbf1)
  =
  \mathrm{rcMMSE}_{i,\mu}(s).
\]

Applying \cref{lem:scalar-curvature} with $U=X_i$ and side
information $S=Y_{\bm t,-i}$, and using the independence of $Z_i$
from $(X_i,S)$, gives
\begin{equation}
  -\partial_{t_i}I_\mu(\bm t)
  =
  -\partial_{t_i}I(X_i;Y_{\bm t,i}\mid Y_{\bm t,-i})
  \le
  2\int_{t_i}^\infty
  \frac{\mathrm{rcMMSE}_{i,\mu}(\bm t^{\,i,s})}{s^2}\,ds.
  \label{eq:mixed-partial-bound}
\end{equation}

The following data-processing lemma compares
$\mathrm{rcMMSE}_{i,\mu}(\bm t^{\,i,s})$, for
$\bm t=(t,\ldots,t)$, with the one-scale quantity
$\mathrm{rcMMSE}_{i,\mu}(s)$ from
\eqref{eq:diagonal-coordinate-risk}.

\begin{lemma}
For every $0\le t\le s$ and $i\in[n]$,
    \begin{equation}
  \mathrm{rcMMSE}_{i,\mu}((t,t,\ldots,t)^{i,s})
  \le
  2\mathrm{rcMMSE}_{i,\mu}(s).
  \label{eq:mixed-to-diagonal-risk}
\end{equation}
\end{lemma}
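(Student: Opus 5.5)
We prove the statement by a coupling in which both posteriors live on one probability space, followed by an elementary scalar inequality for conditionally i.i.d.\ pairs that gives exactly the factor $2$. Write $\bm t':=(t,\ldots,t)^{i,s}$ for the fine scale vector and $s\mathbf 1$ for the coarse one. Note that in \eqref{eq:mixed-coordinate-risk} both sides of \eqref{eq:mixed-to-diagonal-risk} are normalized by $s^2$, since $t'_i=s$. Throughout, set $d(a,b):=1\wedge|a-b|^2/s^2$ for $a,b\in\mathbb R$.

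\emph{Step 1 (degradation coupling).} Let $Z'_j$, $j\neq i$, be standard Cauchy variables, independent of everything else. Define $Y_{s\mathbf 1,j}:=Y_{\bm t',j}+(s-t)Z'_j$ for $j\ne i$ and $Y_{s\mathbf 1,i}:=Y_{\bm t',i}$. By Cauchy stability, $t Z_j+(s-t)Z'_j\stackrel d=sZ_j$. Hence $(X,Y_{s\mathbf 1})$ has the correct law, and $X\to Y_{\bm t'}\to Y_{s\mathbf 1}$ is a Markov chain.

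On this space we introduce three further variables:
\begin{itemize}[label={}]
\item fine replicas $X'^{(1)},X'^{(2)}$, drawn conditionally i.i.d.\ from $P_{X\mid Y_{\bm t'}}$ given $Y_{\bm t'}$, independently of $(X,Z')$;
\item a coarse replica $W$, drawn from $P_{X\mid Y_{s\mathbf 1}}$ given $Y_{s\mathbf 1}$, independently of all else.
\end{itemize}
Because $Y_{s\mathbf 1}$ is a function of $Y_{\bm t'}$ and noise independent of $X$, the triple $(X,Y_{\bm t'},Y_{s\mathbf 1})$ has the same law as $(X'^{(1)},Y_{\bm t'},Y_{s\mathbf 1})$. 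Combined with the conditional independence of $W$, the Nishimori identity gives
\[
\E d(X'^{(1)}_i,W_i)=\E d(X_i,W_i)=\mathrm{rcMMSE}_{i,\mu}(s).
\]
Moreover, conditionally on $\mathcal F:=\sigma(Y_{\bm t'},Y_{s\mathbf 1},W)$, the variables $X'^{(1)}_i$ and $X'^{(2)}_i$ are i.i.d. This holds because $W$ depends on the fine replicas only through $Y_{s\mathbf 1}$.

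\emph{Step 2 (scalar inequality with constant $2$).} The key claim is the following: if $A,B$ are i.i.d.\ real variables and $c\in\mathbb R$ is fixed, then $\E d(A,B)\le 2\E d(A,c)$. To prove it, put $h(a):=\frac{a-c}{s}\mathbf 1\{|a-c|\le s\}$ and $k(a):=\mathbf 1\{|a-c|>s\}$, so that $h(a)^2+k(a)=d(a,c)$. We first check the pointwise bound
\[
d(a,b)\le (h(a)-h(b))^2+k(a)+k(b).
\]
\begin{itemize}[label={}]
\item If both points are within distance $s$ of $c$, the right side equals $|a-b|^2/s^2$.
\item Otherwise some $k$-term equals $1\ge d(a,b)$.
\end{itemize}
Taking expectations and using independence,
\[
\E(h(A)-h(B))^2=2\operatorname{Var}h(A)\le 2\E h(A)^2 .
\]
Therefore
\[
\E d(A,B)\le 2\E h(A)^2+2\E k(A)=2\E d(A,c).
\]
This is the capped analogue of $\E|A-B|^2\le 2\E|A-c|^2$. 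It is exactly where the constant $2$ enters, as opposed to the $4$ produced by the quasi-triangle inequality \eqref{eq:cap-quasitriangle-scalar}.

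\emph{Step 3 (conclusion).} Apply Step 2 conditionally on $\mathcal F$, with $A=X'^{(1)}_i$, $B=X'^{(2)}_i$ and $c=W_i$, which is $\mathcal F$-measurable. Averaging over $\mathcal F$ gives
\[
\mathrm{rcMMSE}_{i,\mu}(\bm t')=\E d(X'^{(1)}_i,X'^{(2)}_i)\le 2\E d(X'^{(1)}_i,W_i)=2\,\mathrm{rcMMSE}_{i,\mu}(s),
\]
which is \eqref{eq:mixed-to-diagonal-risk}. The edge case $t=0$ is the same argument with $Y_{\bm t',j}=X_j$ for $j\neq i$. The main point requiring care is the conditional independence in Step 1: the $\mathcal F$-conditional i.i.d.\ property must hold with $W$ included in the conditioning. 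This is ensured by drawing $W$ from $Y_{s\mathbf 1}$ alone, using fresh randomness.
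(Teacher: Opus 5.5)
Your proof is correct, and it takes a genuinely different route from the paper's. The paper never works with the hard cap directly. It replaces $1\wedge|u-v|^2/s^2$ by the smooth surrogate $\psi_s(u,v)=|u-v|^2/(s^2+|u-v|^2)$, which lies between half the cap and the cap. Via Bochner's theorem, $1-\psi_s$ is a positive-definite kernel, so $\psi_s=\frac12\|\Gamma_s(u)-\Gamma_s(v)\|^2$ for a Hilbert-space feature map. The replica $\psi_s$-risk is then a posterior variance of $\Gamma_s(X_i)$, and the Pythagorean identity shows it is exactly monotone under the Cauchy-stability garbling. In the paper the factor $2$ is lost only in the sandwich between $\psi_s$ and the cap.

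You instead keep the hard cap throughout. You prove the capped variance inequality $\E d(A,B)\le 2\,\E d(A,c)$ for i.i.d.\ $A,B$, and the degradation coupling lets the coarse posterior sample $W$ act as a randomized estimator built from the fine observation plus independent noise. Your factor $2$ comes from $2\operatorname{Var}h(A)\le 2\,\E h(A)^2$ rather than from a surrogate comparison. The coupling details check out: both the law identity for $(X'^{(1)},Y_{\bm t'},Y_{s\mathbf 1})$ and the conditional i.i.d.\ property given $\mathcal F$ hold.

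What each route buys:
\begin{itemize}
\item Your argument is more elementary, avoiding Bochner and Moore--Aronszajn entirely.
\item Your scalar inequality, applied conditionally on $Y_t$ with $c=a_i(Y_t)$, sharpens \cref{lem:replica-center_1} to $\mathrm{rcMMSE}_\mu(t)\le 2\,\mathrm{cMMSE}_\mu(t)$. This would halve constants downstream, for example the $16$ in \cref{prop:multii_I-MMSE}.
\item The paper's route produces a functional that is exactly monotone under any garbling. This ties the lemma directly to the classical MMSE data-processing inequality and isolates the loss in a single pointwise comparison.
\end{itemize}
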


\begin{proof}
    
For squared loss, the claim follows from the usual data-processing
inequality for MMSE and the stability of Cauchy noise; see, e.g.,
\cite[Section~V]{wu2011functional}.  For the capped quadratic loss, we
use a quadratic surrogate and apply data processing to that surrogate.

For $s>0$, define
\begin{equation}
  \psi_s(u,v)
  :=
  \frac{|u-v|^2}{s^2+|u-v|^2}, u,v \in \R.
  \label{eq:smooth-cap}
\end{equation}
For all $u,v\in\R$,
\begin{equation}
  \frac12
  \left(1\wedge\frac{|u-v|^2}{s^2}\right)
  \le
  \psi_s(u,v)
  \le
  1\wedge\frac{|u-v|^2}{s^2}.
  \label{eq:smooth-hard-comparison}
\end{equation}
Moreover, for any $s>0$, the Cauchy kernel
\begin{equation}
  k_s(u,v):=1-\psi_s(u,v)=\frac{s^2}{s^2+(u-v)^2}
  \label{eq:cauchy-positive-kernel}
\end{equation}
is positive definite.  Indeed, it can be written as the Fourier transform of a Laplace distribution, so Bochner's theorem~\cite{bochner1959lectures} implies the result:
\begin{equation}
  k_s(u,v)
  =
  \int_{\R}e^{i\xi(u-v)}\,\nu_s(d\xi),
  \qquad
  \nu_s(d\xi):=\frac{s}{2}e^{-s|\xi|}\,d\xi.
  \label{eq:cauchy-kernel-fourier}
\end{equation}
Thus, for any $s>0$, the Moore--Aronszajn
theorem~\cite{Aronszajn1950} yields a real Hilbert-space feature map
$\Gamma_s$ satisfying, for all $u,v\in\R$,
\[
  \norm{\Gamma_s(u)}=1,
  \qquad
  \ip{\Gamma_s(u)}{\Gamma_s(v)}=k_s(u,v),
\]
and
\begin{equation}
  \psi_s(u,v)
  =
  \frac12\norm{\Gamma_s(u)-\Gamma_s(v)}^2.
  \label{eq:smooth-cap-hilbert}
\end{equation}

The quadratic structure gives the following useful property.  For any
channel with signal $U$, if $U^{(1)},U^{(2)}$ are i.i.d. posterior
samples of $U$ given an observation vector $\mathcal O$, then the
Nishimori identity from \cref{sec:infthbackground} gives
\begin{align}
  \E\bigl[\psi_s(U^{(1)},U^{(2)})\bigr]
  &=
 \frac{1}{2} \E\norm{
    \Gamma_s(U^{(1)})- \Gamma_s(U^{(2)})
  }^2=
  \E\norm{
    \Gamma_s(U)-\E[\Gamma_s(U)\mid\mathcal O]
  }^2.
  \label{eq:posterior-feature-variance}
\end{align}
Therefore
$\mathbb E[\psi_s(U^{(1)},U^{(2)})]$ can only decrease when the
observation becomes more informative.  Indeed, suppose
$U\to\mathcal O'\to\mathcal O$ is a Markov chain, and set
\[
  M':=\mathbb E[\Gamma_s(U)\mid\mathcal O'],
  \qquad
  M:=\mathbb E[\Gamma_s(U)\mid\mathcal O].
\]
The Markov property gives
$\mathbb E[\Gamma_s(U)\mid\mathcal O',\mathcal O]=M'$, and hence
\[
  \mathbb E\langle\Gamma_s(U)-M',M'-M\rangle=0.
\]
Since
$\Gamma_s(U)-M=(\Gamma_s(U)-M')+(M'-M)$, the Pythagorean identity
gives
\[
  \mathbb E\|\Gamma_s(U)-M\|^2
  =
  \mathbb E\|\Gamma_s(U)-M'\|^2
  +\mathbb E\|M'-M\|^2.
\]
Thus the posterior feature variance for $\mathcal O'$ is no larger
than that for $\mathcal O$; see Blackwell~\cite{Blackwell1953}.

Now specialize to the diagonal scale $\bm t=(t,\ldots,t)$ and take
$s\ge t$.  From the mixed observation
\[
  Y_{(t,\ldots,t,s,t,\ldots,t)}
\]
we can generate the all-$s$ observation $Y_s=X+sZ$ by adding, to
every coordinate $j\ne i$, an independent Cauchy noise of scale
$s-t$.  Indeed, for independent standard Cauchy variables $\xi,\xi'$
and $0\le t\le s$, stability gives
\begin{equation}
  t\xi+(s-t)\xi'\ \stackrel d=\ s\xi.
  \label{eq:cauchy-stability}
\end{equation}
Hence the mixed observation is more informative than the all-$s$
observation.  By \cref{eq:smooth-hard-comparison,eq:posterior-feature-variance}, we can then conclude
\begin{equation}
  \mathrm{rcMMSE}_{i,\mu}((t,\ldots,t)^{i,s})
  \le
  2\mathrm{rcMMSE}_{i,\mu}(s).
\end{equation}

\end{proof}

\begin{proof}[Proof of \cref{prop:multii_I-MMSE}]

Notice that using
\cref{eq:mixed-partial-bound,eq:mixed-to-diagonal-risk} we have
\begin{align}
  -I_\mu'(t)
  &=-\sum_{i=1}^n
  \partial_{t_i}I_\mu(t,\ldots,t)
  \le
  4\sum_{i=1}^n
  \int_t^\infty\frac{\mathrm{rcMMSE}_{i,\mu}(s)}{s^2}\,ds =
  4\int_t^\infty\frac{\mathrm{rcMMSE}_\mu(s)}{s^2}\,ds.
  \label{eq:product-derivative-tail}
\end{align}

Fix $x_0\in T$.  The information-radius identity, followed by
tensorization of relative entropy for the product Cauchy channel,
gives
\begin{align*}
  I_\mu(t)
  &\le
  \mathbb E_X
  D_{\mathrm{KL}}\bigl(
    P_{X+tZ\mid X}\,\|\,P_{x_0+tZ}
  \bigr) \\
  &=
  \sum_{i=1}^n
  \mathbb E_X
  D_{\mathrm{KL}}\bigl(
    P_{X_i+tZ_i\mid X_i}\,\|\,P_{x_{0,i}+tZ_i}
  \bigr) \\
  &\le
  \frac{\mathbb E\|X-x_0\|_1}{t}
  \longrightarrow0.
\end{align*}
Here the last inequality is the shifted-Cauchy estimate used in the
proof of \cref{lem:scalar-curvature}.  Thus
\cref{eq:product-derivative-tail} and Tonelli's theorem give
\begin{align}
  I_\mu(t)
  &=
  \int_t^\infty[-I_\mu'(u)]\,du\le
  4\int_t^\infty
  \int_u^\infty\frac{\mathrm{rcMMSE}_\mu(s)}{s^2}\,ds\,du\\
 & \qquad \qquad =
  4\int_t^\infty
  \frac{s-t}{s^2}\mathrm{rcMMSE}_\mu(s)\,ds
  \le
4\int_t^\infty\frac{\mathrm{rcMMSE}_\mu(s)}{s}\,ds.
  \label{eq:mutual-information-hardy-tail}
\end{align}
Using \cref{lem:replica-center_1} concludes the proof.
\end{proof}

\enlargethispage{2\baselineskip}
\bibliographystyle{alpha}
\bibliography{main}

\end{document}